\numberwithin{equation}{section}
\newtheorem{theorem}{Theorem}[section]
\newtheorem{corollary}[theorem]{Corollary}
\newtheorem{lemma}[theorem]{Lemma}
\newtheorem{prop}[theorem]{Proposition}
\theoremstyle{definition}
\newtheorem{remark}[theorem]{Remark}
\theoremstyle{definition}
\newtheorem{definition}[theorem]{Definition}
\theoremstyle{definition}
\newtheorem{assumption}[theorem]{Assumption}
\theoremstyle{definition}
\def\dashint{\operatorname%
{\,\,\text{\bf-}\kern-.98em\DOTSI\intop\ilimits@\!\!}}
\def\\det{\text{det}}
\def\.5{\frac{1}{2}}
\def\bZ{\mathbb{Z}}
\def\cD{\mathcal{D}}
\newcommand{\RN}[1]{%
  \textup{\uppercase\expandafter{\romannumeral#1}}%
}
\newcommand{\Div}{\operatorname{div}}
\newcommand{\dist}{\text{dist}}
\newcounter{marnote}
\begin{document}


\title[Gradient estimates for divergence form elliptic systems]{Gradient estimates for divergence form elliptic systems arising from composite material}

\author[H. Dong]{Hongjie Dong}
\address[H. Dong]{Division of Applied Mathematics, Brown University, 182 George Street, Providence, RI 02912, USA}
\email{Hongjie\_Dong@brown.edu }
\thanks{H. Dong was partially supported by the NSF under agreement DMS-1600593.}

\author[L. Xu]{Longjuan Xu}
\address[L. Xu]{School of Mathematical Sciences, Beijing Normal University, Laboratory of Mathematics and Complex Systems, Ministry of Education, Beijing 100875, China}
\address{\qquad\quad Division of Applied Mathematics, Brown University, 182 George Street, Providence, RI 02912, USA}
\email{ljxu@mail.bnu.edu.cn, longjuan\_xu@brown.edu}
\thanks{L. Xu was partially supported by the China Scholarship Council (No. 201706040139).}

\begin{abstract}
In this paper, we show that $W^{1,p}$ $(1\leq p<\infty)$ weak solutions to divergence form elliptic systems are Lipschitz and piecewise $C^{1}$ provided that the leading coefficients and data are of piecewise Dini mean oscillation, the lower order coefficients are bounded, and interfacial boundaries are $C^{1,\text{Dini}}$. This extends a result of Li and Nirenberg (\textit{Comm. Pure Appl. Math.} \textbf{56} (2003), 892-925). Moreover, under a stronger assumption on the piecewise $L^{1}$-mean oscillation of the leading coefficients, we derive a global weak type-(1,1) estimate with respect to $A_{1}$ Muckenhoupt weights for the elliptic systems without lower order terms.
\end{abstract}

\maketitle

\section{Introduction and main results}

We consider a composite media with closely spaced interfacial boundaries. The composite media is represented by a bounded domain and divided into a finite number of sub-domains. The physical characteristics of the composite media are smooth in the closure of sub-domains but possibly discontinuous across their boundaries. From the viewpoint of mathematics, these properties are described in terms of a linear second-order divergence type elliptic systems with coefficients which can have jump discontinuities along the boundaries of sub-domains.

To state our main results, we introduce some notation and assumptions. Let $\cD$ be a bounded domain in $\mathbb R^{d}$ that contains $M$ disjoint sub-domains $\cD_{1},\ldots,\cD_{M}$ with $C^{1,\text{Dini}}$ boundaries, that is, $\cD=(\cup_{j=1}^{M}\overline{\cD}_{j})\setminus\partial \cD$. For more details about $C^{1,\text{Dini}}$ boundaries, see Definition \ref{def Dini}. We assume that any point $x\in\cD$ belongs to the boundaries of at most two of the $\cD_{j}$. Hence, if the boundaries of two $\cD_{j}$ touch, then they touch on a whole component of such a boundary. We thus without loss of generality assume that $\partial\cD\subset\partial\cD_{M}$.

Consider the following elliptic systems
\begin{equation}\label{systems}
\mathcal{L}u:=D_{\alpha}(A^{\alpha\beta}D_{\beta}u)+D_{\alpha}(B^{\alpha}u)+\hat{B}^{\alpha}D_{\alpha}u+Cu=\Div g+f,
\end{equation}
where the Einstein summation convention in repeated indices is used,
$$
u=(u^{1},\ldots,u^{n})^{\top},\quad g_{\alpha}=(g_{\alpha}^{1},\ldots,g_{\alpha}^{n})^{\top},\quad f=(f^{1},\ldots,f^{n})^{\top}
$$
are (column) vector-valued functions,
$A^{\alpha\beta}, B^{\alpha}, \hat{B}^{\alpha}$ (often denoted by $A, B, \hat{B}$ for abbreviation), and $C$ are $n\times n$ matrices, which are bounded by a positive constant $\Lambda$, and the leading coefficients matrices $A^{\alpha\beta}$ are uniformly elliptic with ellipticity constant $\nu>0$:
$$\nu|\xi|^{2}\leq A_{ij}^{\alpha\beta}\xi_{\alpha}^{i}\xi_{\beta}^{j},\quad|A^{\alpha\beta}|\leq\nu^{-1}$$
for any $\xi=(\xi_{\alpha}^{i})\in\mathbb R^{n\times d}$. We remark that the hypotheses are also satisfied by the linear systems of elasticity. Recall that a system is called a system of elasticity if $n=d$ and the coefficients satisfy $A_{ij}^{\alpha\beta}=A_{ji}^{\beta\alpha}=A_{\alpha j}^{i\beta}$,
and for all $d\times d$ symmetric matrices $\xi=(\xi_{\alpha}^{i})$,
$$\nu|\xi|^{2}\leq A_{ij}^{\alpha\beta}(x)\xi_{\alpha}^{i}\xi_{\beta}^{j}\leq\nu^{-1}|\xi|^{2},\quad \forall\  x\in \cD.$$

From an engineering point of view, one is interested in deriving bounds on the stress which is represented by $Du$, provided that the principle coefficients $A(x)$ are piecewise constants, given by $A(x)=A_{0}$ for $x$ inside the sub-domains representing the fibers, and $A(x)=1$ elsewhere in $\cD$. We would like to mention that Babu\v{s}ka et al. \cite{basl} studied certain homogeneous isotropic linear systems arising from elasticity. They observed numerically that $|Du|$ is bounded independently of the distance between the regions. When $A_{0}$ is $0$ or $\infty$, it has been shown in many papers that $|Du|$ is unbounded as sub-domains get close; for instance, in \cite{bc,m}. When $A_{0}>0$ is finite, the scalar case when the sub-domains are circular touching fibers of comparable radii was studied by Bonnetier and Vogelius in \cite{bv}. They showed that $|Du|$ remains bounded by using a M\"{o}bius transformation and the maximum principle. A general result concerning the solution to a large class of divergence form elliptic equations with discontinuous coefficients was studied by Li and Vogelius \cite{lv}. There the coefficients $A^{\alpha\beta}$ are assumed to be $C^{\delta}$ ($0<\delta<1$) up to the boundary in each sub-domain with $C^{1,\mu}$ boundary with $\mu\in(0,1]$, but may have jump discontinuities across the boundaries of the sub-domains. The authors derived global Lipschitz and piecewise $C^{1,\delta'}$ estimates of the solution $u$ for $0<\delta'\leq\min\{\delta,\frac{\mu}{d(\mu+1)}\}$. Li and Nirenberg \cite{ln} extended their results to elliptic systems under the same conditions and with $0<\delta'\leq\min\{\delta,\frac{\mu}{2(\mu+1)}\}$.  Thus, a natural question is whether it is possible to further improve the range of $\delta'$. In this paper, we give a definitive answer to the above question.

Denote by $\mathcal{A}$ the set of piecewise constant functions in each $\cD_{j}$, $j=1,\ldots,M$. We then further assume that $A$ is of piecewise Dini mean oscillation in $\cD$, that is,
\begin{align}\label{def omega A}
\omega_{A}(r):=\sup_{x_{0}\in \cD}\inf_{\hat{A}\in\mathcal{A}}\fint_{B_{r}(x_{0})}|A(x)-\hat{A}|\ dx
\end{align}
satisfies the Dini condition, where $B_{r}(x_{0})\subset \cD$. For more details about the Dini condition, see Definition \ref{piece Dini}. For $\varepsilon>0$ small, we set
$$\cD_{\varepsilon}:=\{x\in \cD: \mbox{dist}(x,\partial \cD)>\varepsilon\}.$$

Now, we state the first main result of this paper.
\begin{theorem}\label{thm1}
Let $\cD$ be defined as above. Let $\varepsilon\in (0,1)$, $p\in(1,\infty)$, and $\gamma\in(0,1)$. Assume that $A$, $B$, and $g$ are of piecewise Dini mean oscillation in $\cD$, and $f, g\in L^{\infty}(\cD)$. If $u\in W^{1,p}(\cD)$ is a weak solution to \eqref{systems} in $\cD$, then $u\in C^{1}(\overline{{\cD}_{j}\cap \cD_{\varepsilon}})$, $j=1,\ldots,M$, and $u$ is Lipschitz in $\cD_{\varepsilon}$. Moreover, for any fixed $x\in \cD_{\varepsilon}$, there exists a coordinate system associated with $x$, such that for all $y\in \cD_{\varepsilon}$, we have
\begin{align*}
&|(D_{x'}u(x),U(x))-(D_{x'}u(y),U(y))|\nonumber\\
&\leq N\int_{0}^{|x-y|}\frac{\tilde{\omega}_{g}(t)}{t}\ dt+N|x_{0}-y|^{\gamma}\|(D_{x'}u,U)\|_{L^{1}(\cD)}+ N\int_{0}^{|x-y|}\frac{\tilde{\omega}_{A}(t)}{t}\ dt\nonumber\\
&\quad\cdot\left(\|(D_{x'}u,U)\|_{L^{1}(\cD)}+\int_{0}^{1}\frac{\tilde{\omega}_{g}(t)}{t}\ dt+\|g\|_{L^{\infty}(\cD)}+\|f\|_{L^{\infty}(D)}+\|u\|_{L^{p}(\cD)}\right),
\end{align*}
where $U=A^{d\beta}D_{\beta}u+B^{d}u-g_{d}$, $N$ depends on $n,d,M,p,\Lambda,\nu,\varepsilon,\omega_{B}$, and the $C^{1,\text{Dini}}$ characteristics of $\cD_{j}$, and $\tilde\omega_{\bullet}(t)$ is a Dini function derived from $\omega_{\bullet}(t)$.
\end{theorem}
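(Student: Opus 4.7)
The plan is to prove the theorem via a local mean oscillation / Campanato-type iteration adapted to the piecewise setting, using the fact that the vector $V:=(D_{x'}u,U)$ consisting of tangential derivatives and the co-normal quantity $U=A^{d\beta}D_\beta u+B^d u-g_d$ is the natural continuous quantity across each interface (the other components of $Du$ are the ones that may jump). The strategy is in the spirit of recent Dini-mean-oscillation estimates (\`a la Dong--Kim), but one must genuinely exploit the transmission structure of the two-phase problem at interfacial boundary points.

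First, I would set up the local picture around a point $x_0\in\cD_\varepsilon$. If $x_0$ lies in the interior of some $\cD_j$, work in a ball $B_r(x_0)\subset\cD_j$ where the problem is a standard single-phase equation; the known single-phase Dini-mean-oscillation theory immediately gives the desired estimate for $Du$. If $x_0$ lies on some $\partial\cD_j\cap\partial\cD_k$, choose the coordinate system in the statement so that the $x_d$-axis is normal to $\partial\cD_j$ at $x_0$, and use the $C^{1,\mathrm{Dini}}$ regularity of the interface to straighten it via a diffeomorphism $\Phi$ of class $C^{1,\mathrm{Dini}}$; by the chain rule the transformed leading coefficient still has Dini mean oscillation in the piecewise sense, the lower order terms remain bounded, the right-hand side data $g,f$ keep their Dini-mean-oscillation regularity, and the transmission set becomes a flat hyperplane $\{x_d=0\}$.

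Next comes the core step: in each ball $B_r\subset\cD_\varepsilon$, split $u=w+v$ where $w$ solves the two-phase problem on $B_r$ with the coefficients $A$ replaced by a piecewise constant minimizer $\bar A\in\mathcal A$ realizing (up to a constant) the infimum in \eqref{def omega A}, with $B,\hat B,C$ replaced by zero, with $g$ replaced by a piecewise constant approximant $\bar g$, and with $f$ replaced by zero, and where $v=u-w$ picks up the defects. For the piecewise constant coefficient problem with flat interface, the tangential derivatives and the conormal component $U$ of the solution $w$ satisfy the same uniformly elliptic system without a jump across the interface; hence by classical Schauder/$C^{1,\alpha}$ theory, $(D_{x'}w,\bar U_w)$ is Hölder continuous at $x_0$ and
\[
\fint_{B_{\kappa r}}\bigl|(D_{x'}w,\bar U_w)-(D_{x'}w,\bar U_w)_{B_{\kappa r}}\bigr|\,dx\le C\kappa^\gamma\fint_{B_r}\bigl|(D_{x'}w,\bar U_w)\bigr|\,dx,
\]
for any $\kappa\in(0,1/2)$. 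The difference $v$ is estimated by the reverse Hölder / $W^{1,p}$ theory for the original operator together with the approximation errors, producing a bound in terms of $\omega_A(r)$, $\omega_B(r)$, $\omega_g(r)$, $\|f\|_{L^\infty}$, and $\|u\|_{L^p}$. Combining these gives the key recursive inequality for
\[
\phi(r):=\inf_{\vec q}\fint_{B_r(x_0)}|V-\vec q|\,dx,
\]
of the form $\phi(\kappa r)\le C\kappa^\gamma\phi(r)+C\,\tilde\omega(r)(\text{data})$, with $\tilde\omega$ a Dini modulus built from $\omega_A,\omega_B,\omega_g$.

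Finally, I would iterate this Campanato-type inequality (standard, e.g.\ following a lemma of Dong--Kim): for $\kappa$ small so that $C\kappa^\gamma<1/2$, summing a geometric series in $r$ yields $\int_0^1\phi(r)r^{-1}\,dr<\infty$, which by a Campanato-type embedding implies that $V$ is continuous at $x_0$ with modulus controlled by $\int_0^{\rho}\tilde\omega(t)t^{-1}\,dt$. Applied to every $x_0$ and every pair $x,y\in\cD_\varepsilon$ (choosing $\rho=|x-y|$ and splitting the comparison through the midpoint), this produces the claimed pointwise estimate. Lipschitz continuity of $u$ in $\cD_\varepsilon$ then follows because $V$ is bounded and the missing normal components of $Du$ can be algebraically recovered from $V$ and $u$ using the definition of $U$ together with the uniform ellipticity of $A^{dd}$ on each side, combined with the $L^\infty$ bound on $u$ coming from $W^{1,p}$-theory.

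The principal obstacle is the interface step: obtaining the clean $\kappa^\gamma$ decay for $(D_{x'}w,U_w)$ in the two-phase frozen-coefficient model. One must verify that tangential derivatives and the conormal combination really do satisfy an elliptic system without a jump across $\{x_d=0\}$ (this uses $A_{ij}^{d\beta}=A_{ji}^{\beta d}$-type relations and the definition of $U$), control the bookkeeping introduced by the $C^{1,\mathrm{Dini}}$ flattening map (which contributes an additional Dini error term to $\tilde\omega_A$), and show that the resulting Campanato-iteration constants can be taken uniform in $x_0\in\cD_\varepsilon$. All other steps—$W^{1,p}$ solvability of the perturbation $v$, the reverse Hölder inequality, and the Dini-modulus summation—are standard once the frozen-problem decay is in hand.
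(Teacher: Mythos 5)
Your overall architecture --- choose coordinates so that $x^d$ is normal to the interface at $x_0$, freeze coefficients to piecewise constants $\bar A$ in $x^d$, prove clean decay for $(D_{x'}w,\bar U_w)$ of the frozen problem, and iterate a Campanato inequality for $\phi$ with the perturbation $v=u-w$ absorbed as a Dini error --- is indeed the skeleton of the paper's proof (Proposition~\ref{main prop} and Lemmas~\ref{lemma xn}, \ref{lemma itera}, \ref{lem3.4}). But there is a genuine gap at the single step you dismiss as ``standard once the frozen-problem decay is in hand.''

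You propose to estimate the perturbation $v$ by ``reverse H\"older / $W^{1,p}$ theory for the original operator together with the approximation errors.'' This cannot work here. The right-hand side for $v$ is essentially $F=(\bar A-A)Du+g-\bar g$, and the hypothesis on $A$ and $g$ is only \emph{$L^1$-mean} Dini oscillation, so $\fint_{B_r}|F|\lesssim \omega_A(r)\|Du\|_\infty+\omega_g(r)$ is all you get; there is no bound on $\|F\|_{L^p(B_r)}$ for any $p>1$ that scales like $\omega_A(r)$. Consequently no $W^{1,p}$ estimate with $p>1$ is available, and one cannot close an $L^1$-Campanato iteration on $\phi(r)=\inf_{\vec q}\fint_{B_r}|V-\vec q|$ either, because the solution map $F\mapsto Dv$ is \emph{not} bounded $L^1\to L^1$ (it is a Calder\'on--Zygmund-type operator). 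What actually holds, and what the proof needs, is a \emph{weak type-$(1,1)$} bound for $F\mapsto Dv$ with the frozen operator; the paper proves this in Lemma~\ref{weak est barv} by a duality argument combined with the interior $C^{1,\alpha}$-type estimates of Lemma~\ref{lemma xn}. Having only weak-$(1,1)$ control forces the Campanato quantity to be defined in $L^q$ with $0<q<1$ (where weak-$L^1$ bounds do give $L^q$ bounds), not in $L^1$; you will notice that the paper's $\phi(x_0,r)$ is an $L^q$-oscillation with a fixed $q\in(0,1)$, and the algebra of the iteration (in particular the $q$-th root manipulations in \eqref{case1} and \eqref{case2}) hinges on this. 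Your proposal silently assumes the strong $(1,1)$ or an $(p,p)$ with $p>1$, neither of which is available with Dini $L^1$-mean oscillation data.

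Two further, lesser points. First, the paper does not globally flatten the interface for the main iteration: it compares the true subdomains with flat strips $\Omega_j$ and controls $|(\cD_j\Delta\Omega_j)\cap B_r|$ by a Dini function $\omega_1$ derived from $\omega_0$ (Lemma~\ref{volume}); the $C^{1,\text{Dini}}$ flattening you describe is used only in the final approximation step to remove the a priori $C^{0,1}$ assumption. A flattening approach is plausible, but you would have to justify carefully that the pushed-forward coefficients retain piecewise Dini $L^1$-mean oscillation with a quantitatively Dini modulus --- this is exactly the sort of bookkeeping that Lemma~\ref{volume} circumvents. Second, since the privileged coordinate system depends on the base point $x_0$, your final ``split the comparison through the midpoint'' step needs to account for the mismatch between the coordinate systems at $x$ and $y$; this is the content of Case~2 in the paper's proof, where the coordinate change is estimated by $\omega_1(|x_0-y|)$, and is not automatic.
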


\begin{corollary}\label{coro}
Let $\cD$ be defined as above and the boundary condition on each sub-domain be replaced by $C^{1,\mu}$. Let $\varepsilon\in (0,1)$ and $p\in(1,\infty)$. Assume that $A, B, g\in C^{\delta}(\overline{\cD}_{j})$ with $\delta\in (0,1)$, and $f\in L^{\infty}(\cD)$. If $u\in W^{1,p}(\cD)$ is a weak solution to \eqref{systems} in $\cD$, then $u\in C^{1}(\overline{{\cD}_{j}\cap \cD_{\varepsilon}})$, $j=1,\ldots,M$. Moreover, for any fixed $x\in \cD_{\varepsilon}$, there exists a coordinate system associated with $x$ such that for all $y\in \cD_{\varepsilon}$, we have
\begin{align}\label{Lip Du}
&|D_{x'}u(x)-D_{x'}u(y)|+|U(x)-U(y)|\nonumber\\
&\leq N|x-y|^{\delta'}\left(\sum_{j=1}^{M}|g|_{\delta;\overline{\cD}_{j}}+\|f\|_{L^{\infty}(\cD)}+\|u\|_{L^{p}(\cD)}\right),
\end{align}
where $0<\delta'=\min\{\delta,\frac{\mu}{\mu+1}\}$, $N$ depends on $n,d,M,\delta,\mu,\nu,\Lambda,\varepsilon,p,\|A\|_{C^{\delta}(\overline{\cD}_{j})},\|B\|_{C^{\delta}(\overline{\cD}_{j})}$, and the $C^{1,\mu}$ norms of $\cD_{j}$.
\end{corollary}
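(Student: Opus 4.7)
The plan is to derive Corollary \ref{coro} as a direct consequence of Theorem \ref{thm1} by specializing the piecewise Dini moduli to power-type moduli furnished by the H\"older hypotheses, and then choosing $\gamma$ optimally.

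First, I would verify the hypotheses of Theorem \ref{thm1}. For $A, B, g \in C^\delta(\overline{\cD}_j)$, and any $B_r(x_0) \subset \cD$, the piecewise-constant function $\hat A \in \mathcal{A}$ obtained by setting $\hat A$ equal to $A$ at a fixed reference point in each $\cD_j \cap B_r(x_0)$ gives
$$
\omega_A(r) \leq N\Bigl(\sum_{j=1}^M |A|_{\delta;\overline{\cD}_j}\Bigr) r^\delta,
$$
and likewise $\omega_B(r) + \omega_g(r) \leq N r^\delta$. The $L^\infty$ bound on $g$ is automatic from $g \in C^\delta(\overline{\cD}_j)$ on each sub-domain. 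Hence the hypotheses of Theorem \ref{thm1} hold.

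Next, I would track how the strengthening from $C^{1,\text{Dini}}$ to $C^{1,\mu}$ on the interfaces sharpens the derived Dini function $\tilde\omega_\bullet$. When an interface $\partial \cD_j$ is $C^{1,\mu}$, the local flattening diffeomorphism used in the proof of Theorem \ref{thm1} transforms a function in $\mathcal{A}$ into one whose piecewise-constant interface deviates from a flat interface by $O(r^{1+\mu})$ at scale $r$; balancing this boundary error against the scale at which the coefficient oscillation is frozen produces the combined modulus
$$
\tilde\omega_A(r),\ \tilde\omega_g(r) \leq N\, r^{\delta'}, \qquad \delta' = \min\Bigl\{\delta,\, \frac{\mu}{\mu+1}\Bigr\}.
$$
In particular, $\int_0^{|x-y|} \tilde\omega_\bullet(t)/t\, dt \leq N |x-y|^{\delta'}/\delta'$ and $\int_0^1 \tilde\omega_g(t)/t\, dt \leq N/\delta'$.

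Finally, I would choose $\gamma = \delta'$ in Theorem \ref{thm1} and substitute the moduli above. Each of the three terms on the right-hand side of the conclusion is then bounded by $N|x-y|^{\delta'}$ multiplied by a combination of the quantities $\sum_j |g|_{\delta;\overline{\cD}_j}$, $\|g\|_{L^\infty(\cD)}$, $\|f\|_{L^\infty(\cD)}$, $\|u\|_{L^p(\cD)}$, and $\|(D_{x'}u, U)\|_{L^1(\cD)}$. The last factor is controlled by $\|u\|_{W^{1,p}(\cD)}$ via H\"older's inequality, and $\|u\|_{W^{1,p}(\cD)}$ is in turn bounded by $\|u\|_{L^p(\cD)} + \|g\|_{L^\infty(\cD)} + \|f\|_{L^\infty(\cD)}$ through the standard $W^{1,p}$ a priori estimate for \eqref{systems}. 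Collecting these bounds gives \eqref{Lip Du}. The main obstacle will be justifying the exponent $\mu/(\mu+1)$ in $\tilde\omega_\bullet$, which is strictly better than the Li--Nirenberg bound $\mu/(2(\mu+1))$; this requires showing that the freezing step of Theorem \ref{thm1} captures the boundary deviation in $L^1$ mean oscillation rather than $L^\infty$, which is precisely where the earlier H\"older-based arguments lost a factor. Once this refined modulus estimate is in hand, the reduction to Theorem \ref{thm1} is purely bookkeeping.
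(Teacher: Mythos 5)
Your overall route is the paper's: specialize the moduli to powers and feed them into the estimates behind Theorem \ref{thm1}. However, the two places where you are vague are exactly where the content of the corollary lies. First, the bound $\tilde\omega_A(r),\tilde\omega_g(r)\leq Nr^{\delta'}$ does not come from a flattening diffeomorphism, nor from the $O(r^{1+\mu})$ flatness of a single interface: it comes from Lemma \ref{volume} and Remark \ref{rmk volume}. One compares $\cD_j$ with the strips $\Omega_j$ obtained by freezing $x'=x_0'$, and the ordering \eqref{eq10.42} together with $\nabla_{x'}h_{j_0}(x'_0)=0'$ forces any \emph{other} interface meeting $B_r(x_0)$ to have slope at most $N r^{\mu/(1+\mu)}$ (through $R\leq\big(\tfrac{3(1+\mu)}{2\mu}r\big)^{1/(1+\mu)}$), so that $r^{-d}|(\cD_{j}\Delta\Omega_{j})\cap B_{r}(x_{0})|\leq Nr^{\mu/(1+\mu)}$ and hence $\fint_{B_r(x_0)}\big(|A-\bar A|+|B-\bar B|+|g-\bar g|\big)\,dx\leq Nr^{\delta'}$, which is the lemma in Section \ref{proof thm1} (the analogue of Lemma 5.2 in \cite{lv}). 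Only a graph at a point where its gradient vanishes is flat to order $r^{1+\mu}$; for the remaining interfaces you need this squeezing argument, and this is precisely the step you defer as ``the main obstacle,'' so as written the key quantitative input is asserted rather than proved. (Also, the reason the exponent beats Li--Nirenberg is the use of $L^{1}$-mean oscillation together with the weak type-$(1,1)$ perturbation, as opposed to $L^{2}$-based arguments; your ``$L^{1}$ rather than $L^{\infty}$'' comparison is not the relevant one.)

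Second, choosing $\gamma=\delta'$ is inconsistent with $\tilde\omega_\bullet(t)\leq Nt^{\delta'}$: in the construction \eqref{tilde phi}, with $\bar\omega_\bullet(t)\sim t^{\delta'}$ the series $\sum_i\kappa^{i\gamma}\bar\omega_\bullet(\kappa^{-i}t)$ is bounded by $Nt^{\delta'}$ only when $\gamma>\delta'$; for $\gamma=\delta'$ you pick up a factor $\log(1/t)$ and the clean bound \eqref{Lip Du} fails as written. The paper takes $\gamma\in(\delta',1)$ and then concludes from \eqref{C1 est}, \eqref{C1 est1}, and \eqref{C1 est2}; the same choice fixes your argument. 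Finally, your control of $\|(D_{x'}u,U)\|_{L^{1}}$ should invoke the interior estimate of Lemma \ref{lem loc lq} (after the reduction removing the lower-order terms), not a global $W^{1,p}$ a priori estimate, since no boundary condition is imposed on $u$; the localized statement is what the paper actually uses and suffices because the conclusion is only claimed on $\cD_\varepsilon$.
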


\begin{remark}
In the above results the gradient estimates are independent of the distance between these sub-domains. In \cite{XB13}, Xiong and Bao derived very general BMO, Dini, and H\"older estimates for $H^1$ weak solutions to \eqref{systems}. In particular, for the H\"older estimates, they allowed
$\delta'=\delta=\mu$. However, it appears that the estimates in \cite{XB13} depends on the distance between sub-domains. Since our estimates are independent of the distance, by a similar reasoning as in \cite[Remark 1.2]{ln}, we can obtain \eqref{Lip Du} in the case when more than two of the sub-domains $\overline{\cD}_{j}$ touch, by an approximation argument.
\end{remark}

\begin{remark}
Our result yields $C^{1,\delta'}$ interior estimates for $u$. Actually, we observe from \eqref{Lip Du} that for each $j=1,\ldots,M$, $D_{x'}u, U\in C^{\delta'}(\overline{\cD}_{j}\cap \cD_{\varepsilon})$. Moreover, since
$$D_{d}u=(A^{dd})^{-1}\left(U+g_{d}-B^{d}u-\sum_{\beta=1}^{d-1}A^{d\beta}D_{\beta}u\right),$$
we conclude that $D_{d}u\in C^{\delta'}(\overline{\cD}_{j}\cap \cD_{\varepsilon})$. Compared to \cite{ln}, the range of $\delta'$ is improved.
\end{remark}

\begin{remark}
The conditions on $f$ in Theorem \ref{thm1} and Corollary \ref{coro} can be relaxed. From the proofs below, it is easily seen that we only need $f$ to be in some weaker Morrey space.
\end{remark}

Using the duality argument which is developed in \cite{a,b}, we have the following
\begin{corollary}\label{coro2}
Under the same conditions as in Theorem \ref{thm1}. If $u\in W^{1,1}(\cD)$ is a weak solution to \eqref{systems} in $\cD$, then $u\in W_{\text{loc}}^{1,p}(\cD)$ for some $p\in(1,\infty)$ and the conclusion of Theorem \ref{thm1} still holds.
\end{corollary}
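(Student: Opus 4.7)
The plan is to upgrade the given $W^{1,1}$ weak solution to $W^{1,p}_{\text{loc}}(\cD)$ for some $p>1$, after which Theorem \ref{thm1} immediately delivers the remaining conclusions on any $\cD_\varepsilon$. The $L^1 \to L^p$ upgrade is the content of the duality argument from \cite{a,b}.

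Fix concentric balls $B_R \Subset B_{2R} \Subset B_{3R} \Subset \cD$ and a cutoff $\eta \in C_c^\infty(B_{2R})$ with $\eta \equiv 1$ on $B_R$; then $w := \eta u \in W^{1,1}_0(B_{3R})$ is supported in $\overline{B_{2R}}$ and, by distributing derivatives, solves
\begin{equation*}
\mathcal{L} w = \Div G + F \quad\text{in } B_{3R},
\end{equation*}
where $G \in L^1$ and $F \in L^1 + L^\infty$ are built from $u, Du, g, f$ and derivatives of $\eta$. For an arbitrary test field $\phi \in C_c^\infty(B_R; \mathbb R^{n\times d})$, consider the adjoint Dirichlet problem
\begin{equation*}
\mathcal{L}^\ast v = \Div \phi \quad\text{in } B_{3R}, \qquad v = 0 \quad\text{on } \partial B_{3R}.
\end{equation*}
Since $A$ is of piecewise Dini (in particular partially VMO) mean oscillation and the interfaces are $C^{1,\text{Dini}}$, the standard $L^q$-theory furnishes a unique $v \in W^{1,q}_0(B_{3R})$ for every $q \in (1, \infty)$. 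Because $\mathcal{L}^\ast$ inherits the hypotheses of Theorem \ref{thm1}, applying that theorem to $v$ on the compactly embedded ball $B_{2R}$---together with Sobolev embedding on $B_{3R}$ to produce a pointwise bound on $v$---yields, for some sufficiently large $q < \infty$,
\begin{equation*}
\|v\|_{L^\infty(B_{2R})} + \|Dv\|_{L^\infty(B_{2R})} \le N \|\phi\|_{L^q(B_R)}.
\end{equation*}

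With these bounds in hand, the pairing $\langle \mathcal{L} w, v \rangle = \langle w, \mathcal{L}^\ast v \rangle$ is justified term by term, because every resulting integral is an $L^1 \cdot L^\infty$ product on $\overline{B_{2R}}$; rearranging one obtains
\begin{equation*}
\Bigl|\int_{B_R} Du \cdot \phi \,dx \Bigr| = \Bigl|\int_{B_{3R}} Dw \cdot \phi \,dx \Bigr| \le N \|\phi\|_{L^q(B_R)},
\end{equation*}
so $Du \in L^{q'}(B_R)$ by $L^q$--$L^{q'}$ duality. Setting $p = q'$ and covering $\cD_\varepsilon$ by finitely many such balls gives $u \in W^{1,p}_{\text{loc}}(\cD)$, at which point Theorem \ref{thm1} completes the proof.

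The principal technical obstacle is converting the $L^\infty$ data hypothesis of Theorem \ref{thm1} into the $L^q$ bound for the adjoint solution displayed above. One must inspect the proof of Theorem \ref{thm1} and verify that its Lipschitz estimate for $Dv$ on $B_{2R}$ can be re-expressed with $\|\phi\|_{L^q}$ in place of $\|\phi\|_{L^\infty}$, via the standard $W^{1,q}(B_{3R})$ bound for $v$ combined with Sobolev embedding on the slightly larger ball. The cutoff commutators $G, F$ belong only to $L^1$, but they pair harmlessly with the bounded $v, Dv$ on the support of $w$, so they introduce no further difficulty.
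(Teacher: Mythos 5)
Your overall skeleton (localize, solve an adjoint Dirichlet problem, use Theorem \ref{thm1} for the adjoint solution, conclude by duality) is the same as the paper's, but there is a genuine gap at the estimate your argument hinges on: the bound $\|Dv\|_{L^\infty(B_{2R})}\le N\|\phi\|_{L^q(B_R)}$ is false, and it cannot be rescued by ``inspecting the proof of Theorem \ref{thm1}.'' The Lipschitz estimate of Theorem \ref{thm1} necessarily involves the Dini mean oscillation of the data, i.e. quantities like $\|\phi\|_{L^\infty}$ and $\int_0^1\tilde\omega_{\phi}(t)t^{-1}\,dt$, and neither is controlled by $\|\phi\|_{L^q}$ for $q<\infty$. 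Indeed the claimed bound already fails for the Laplacian: for $\Delta v=\Div\phi$ the map $\phi\mapsto Dv$ is a Calder\'on--Zygmund operator, bounded on $L^q$ but not from $L^q$ to $L^\infty$. So the step where you estimate $\int G\cdot Dv$ as an ``$L^1\cdot L^\infty$ product'' with a constant proportional to $\|\phi\|_{L^q}$ breaks down, and with it the final duality inequality.

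The paper's proof avoids exactly this trap: Theorem \ref{thm1} is invoked only \emph{qualitatively}, to know $Dv\in L^\infty_{\mathrm{loc}}$ so that the $W^{1,1}$ function $\zeta u$ is an admissible test function in the adjoint weak formulation (and $\zeta v\in W^{1,\infty}_0$ in the formulation for $u$). All \emph{quantitative} bounds for $v$ come from the local $W^{1,p'}$ estimate of Lemma \ref{lem loc lq} together with Morrey's inequality, namely $\|v\|_{W^{1,p'}(\cD_\varepsilon)}+\|v\|_{L^\infty(\cD_\varepsilon)}\le N\|h\|_{L^{p'}(\cD)}$ with $p'>d$, which do hold. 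The dangerous top-order pairing $\int \zeta A^{\alpha\beta}D_\beta u\,D_\alpha v$ (only $L^1\cdot L^\infty$, with no quantitative control by $\|h\|_{L^{p'}}$) is never estimated at all: it cancels when the two weak formulations tested against $\zeta u$ and $\zeta v$ are subtracted, and the surviving commutator terms involve only $v$ against $Du\,D\zeta$ (use $\|v\|_{L^\infty}$), $Dv$ against $u\,D\zeta$ (use H\"older with $u\in L^{d/(d-1)}$ and $\|Dv\|_{L^{d}}\le N\|Dv\|_{L^{p'}}$), and $g,f_0$ against $v,Dv$. Your localization by $w=\eta u$ could be repaired in the same spirit---estimate the terms containing $Dv$ by $\|Dv\|_{L^q(B_{2R})}\lesssim\|\phi\|_{L^q}$ paired with $u\in L^{d/(d-1)}$ or $g\in L^\infty$, and keep the $L^\infty$ gradient bound only as a qualitative justification of the pairing---but as written, with the quantitative $L^\infty$ bound on $Dv$ as the linchpin, the proof does not go through.
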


Our arguments and methods are different from those in \cite{ln,lv}. The proofs below are based on Campanato's approach, which was presented in \cite{c,g} and used previously, for instance, in \cite{d,XB13,dk,dek}. The authors in \cite{dk} showed that any weak solution to elliptic equations in divergence form is continuously differentiable if the modulus of continuity of leading coefficients in the $L^{1}$-mean sense satisfies the Dini condition. Recently, Dong, Escauriaza, and Kim \cite{dek} extended and improved the results in \cite{dk} to the boundary for solutions satisfying the zero Dirichlet boundary condition.

The main step of such method is to show that the mean oscillation of $Du$ in balls vanish in a certain order as the radii of balls go to zero. However, we cannot employ this method directly because of the following two obstructions. The first one is the discontinuity of $Du$ in one direction, a situation similar to that in \cite{d}. For a fixed coordinate system, the author in \cite{d} obtained some interior H\"{o}lder regularity of $D_{x'}u$ and $U$ for elliptic systems with coefficients which are H\"{o}lder in $x'$ and measurable in $x^{d}$. Inspired by this, we first choose a coordinate system according to the geometry of the sub-domains, then we consider the elliptic systems with coefficients depending on one variable alone, say $x^{d}$, and derive some interior H\"{o}lder regularity of $D_{x'}u$ and $\bar{U}:=\bar{A}^{d\beta}D_{\beta}u$, where $\bar{A}^{d\beta}$ are piecewise constant matrix-valued functions corresponding to $A$. The second difficulty is that since we only impose the assumptions on the $L^{1}$-mean oscillation of the leading coefficients and data, we cannot use the usual argument based on $L^{p}$ $(p>1)$ estimates. To this end, we make use of a duality argument to derive weak type-$(1,1)$ estimates for solutions to elliptic systems with coefficients depending only on $x^{d}$. Then, we utilize Campanato's method in the $L^{p}$ $(0<p<1)$ setting and some perturbation arguments on $D_{x'}u$ and $U$ together with a certain decomposition of $u$ to get the desired results.

In a forthcoming paper, we will study the second-order elliptic equation in non-divergence form under the same assumptions as that in Theorem \ref{thm1}.

Throughout this paper, unless otherwise stated, $N$ denotes a constant, whose value may vary from line to line and independent of the distance between sub-domains. We call it a {\it{universal constant}}.

The rest of this paper is organized as follows. In Section \ref{preliminaries}, we first fix our domain and the coordinate system associated with a fixed point. For reader's convenience, we introduce some notation, definitions, and lemmas used in this paper. In Section \ref{proof thm1}, we provide the proofs of Theorem \ref{thm1} and Corollary \ref{coro}. In Section \ref{sec_coro2}, we use the duality argument to prove that $u\in W_{\text{loc}}^{1,p}(\cD)$ for some $p\in(1,\infty)$ under the conditions of Corollary \ref{coro2}. Section \ref{sec thm3} is devoted to our second purpose, a global weak type-$(1,1)$ estimates under an additional condition on $\omega_{A}$ and the Dini function introduced in Definition \ref{def Dini}.

\section{Preliminaries}\label{preliminaries}
In this section, we introduce some lemmas which will be used throughout the proofs. Hereafter in this paper, we shall use the following notation and definitions.

\subsection{Notation and definitions}

We write $x=(x^{1},\ldots,x^{d})=(x',x^{d})$, where $d\geq2$. We shall denote
$$B_{r}(x):=\{y\in\mathbb R^{d}: |y-x|<r\},\quad B'_{r}(x'):=\{y'\in\mathbb R^{d-1}: |y'-x'|<r\}.$$
We use $B_{r}:=B_{r}(0)$, $B'_{r}:=B'_{r}(0')$, and $\cD_{r}(x):=\cD\cap B_{r}(x)$ for abbreviation, respectively, where $0\in\mathbb R^{d}$ and $0'\in\mathbb R^{d-1}$. We will also use the following notation:
$$D_{x'}u=u_{x'},\quad DD_{x'}u=u_{xx'}.$$
For a function $f$ defined in $\mathbb R^{d}$, we set
$$(f)_{\cD}=\frac{1}{|\cD|}\int_{\cD}f(x)\ dx=\fint_{\cD}f(x)\ dx,$$
where $|\cD|$ is the $d$-dimensional Lebesgue measure of $\cD$. For $\gamma\in(0,1]$, we denote the $C^{\gamma}$ semi-norm by
$$[u]_{\gamma;\cD}:=\sup_{\begin{subarray}{1}x,y\in\cD\\
x\neq y
\end{subarray}}\frac{|u(x)-u(y)|}{|x-y|^{\gamma}},$$
and the $C^{\gamma}$ norm by
$$|u|_{\gamma;\cD}:=[u]_{\gamma;\cD}+|u|_{0;\cD},\quad \text{where}\,\,|u|_{0;\cD}=\sup_{\cD}|u|.$$

\begin{definition}\label{piece Dini}
We say that a continuous increasing function $\omega: [0,1]\rightarrow\mathbb R$ satisfies the Dini condition provided that $\omega(0)=0$ and
$$\int_{0}^{t}\frac{\omega(s)}{s}\ ds<+\infty,\quad\forall~t\in(0,1).$$
\end{definition}

\begin{definition}\label{def Dini}
Let $\cD\subset\mathbb R^{d}$ be open and bounded. We say that $\partial\cD$ is $C^{1,\text{Dini}}$ if for each point $x_{0}\in\partial\cD$, there exists $R_0\in (0,1/8)$ independent of $x_{0}$ and a $C^{1,\text{Dini}}$ function (that is, $C^{1}$ function whose first derivatives are Dini continuous) $\varphi: B'_{R_0}\rightarrow\mathbb R$ such that (upon relabeling and reorienting the coordinates if necessary) in a new coordinate system $(x',x^{d})$, $x_{0}$ becomes the origin,
$$\cD_{R_{0}}(0)=\{x\in B_{R_{0}}: x^{d}>\varphi(x')\},\quad\varphi(0')=0,$$
and $\nabla_{x'}\varphi$ has a modulus of continuity $\omega_0$, which is increasing, concave, and independent of $x_0$.
\end{definition}

\subsection{Some properties of the domain, coefficients, and data}\label{subsection domain}

Below, we slightly abuse the notation. Consider $\cD$ to be the unit ball $B_{1}$ and take $x_{0}\in B_{3/4}$. By suitable rotation and scaling, we may suppose that a finite number of sub-domains lie in $B_{1}$ and that they take the form
$$x^{d}=h_{j}(x'),\quad\forall~x'\in B'_{1},~j=1,\ldots,l<M,$$
with
\begin{equation}
                            \label{eq10.42}
-1<h_{1}(x')<\dots<h_{l}(x')<1
\end{equation}
and $h_{j}(x')\in C^{1,\text{Dini}}(B'_{1})$. Set $h_{0}(x')=-1$ and $h_{l+1}(x')=1$. Then we have $l+1$ regions:
$$\cD_{j}:=\{x\in \cD: h_{j-1}(x')<x^{d}<h_{j}(x')\},\quad1\leq j\leq l+1.$$ We may suppose that there exists some $\cD_{j_{0}}$, such that $x_{0}\in B_{3/4}\cap \cD_{j_{0}}$ and the closest point on $\partial \cD_{j_{0}}$ to $x_{0}$ is $(x'_{0},h_{j_{0}}(x'_{0}))$, and $\nabla_{x'}h_{j_{0}}(x'_{0})=0'$.
We introduce the $l+1$ ``strips"
$$\Omega_{j}:=\{x\in \cD: h_{j-1}(x'_{0})<x^{d}<h_{j}(x'_{0})\},\quad1\leq j\leq l+1.$$
Then we have the following result.
\begin{lemma}\label{volume}
There exists a constant $N$, depending on $d, l$ and the $C^{1,\text{Dini}}$ characteristics of $h_{j}$, $1\leq j\leq l+1$, such that
$$r^{-d}|(\cD_{j}\Delta\Omega_{j})\cap B_{r}(x_{0})|\leq N\omega_1(r),\quad1\leq j\leq l+1,~0<r<r_0:=\frac 2 3\int_0^{R_0/2}\omega_0'(s)s\,ds,$$
where $\cD_{j}\Delta\Omega_{j}=(\cD_{j}\setminus\Omega_{j})\cup(\Omega_{j}\setminus \cD_{j})$, $\omega_0'$ denotes the left derivative of $\omega_0$, and $\omega_1(r)$ is a Dini function derived from $\omega_0$ in Definition \ref{def Dini}.
\end{lemma}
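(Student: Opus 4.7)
The plan is to bound the symmetric-difference volume by reducing, via Fubini, to a one-dimensional estimate on each vertical slice, and then exploiting the zero-gradient condition $\nabla_{x'}h_{j_0}(x_0')=0'$ at the closest surface.

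First, by Fubini,
\[
|(\cD_j\Delta\Omega_j)\cap B_r(x_0)| = \int_{B'_r(x_0')}\bigl|[\cD_j\Delta\Omega_j]_{x'}\cap I_r(x')\bigr|\,dx',
\]
where $I_r(x'):=\bigl(x_0^d-\sqrt{r^2-|x'-x_0'|^2},\,x_0^d+\sqrt{r^2-|x'-x_0'|^2}\bigr)$ is the $x^d$-slice of $B_r(x_0)$ over $x'$, and $[\,\cdot\,]_{x'}$ denotes the vertical slice. On each slice, $\cD_j$ reduces to $(h_{j-1}(x'),h_j(x'))$ and $\Omega_j$ to $(h_{j-1}(x_0'),h_j(x_0'))$, so an elementary symmetric-difference-of-intervals computation yields
\[
\bigl|[\cD_j\Delta\Omega_j]_{x'}\bigr| \leq |h_{j-1}(x')-h_{j-1}(x_0')| + |h_j(x')-h_j(x_0')|.
\]

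The crucial improved slice-bound comes from the closest surface. Because $\nabla_{x'}h_{j_0}(x_0')=0'$ and $\nabla_{x'}h_{j_0}$ has modulus of continuity $\omega_0$, the fundamental theorem of calculus gives
\[
|h_{j_0}(x')-h_{j_0}(x_0')| \leq \int_0^1 \omega_0(t|x'-x_0'|)\,|x'-x_0'|\,dt = \int_0^{|x'-x_0'|}\omega_0(s)\,ds.
\]
For the remaining surfaces $h_k$ with $k\neq j_0$, only the coarser linear bound $|h_k(x')-h_k(x_0')|\leq L|x'-x_0'|$ is directly available. Here the closest-surface hypothesis enters: every such $h_k$ sits at a definite (vertical) distance $\geq\mathrm{dist}(x_0,\partial\cD_{j_0})$ from $x_0$, and the calibrated choice $r_0:=\tfrac{2}{3}\int_0^{R_0/2}\omega_0'(s)\,s\,ds$ is designed so that, for $r<r_0$, the sum of the ball radius $r$ and the maximal vertical excursion $\int_0^r\omega_0(s)\,ds$ of $h_{j_0}$ stays within the gap separating $x_0$ from any other surface. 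Hence for $r<r_0$ the surfaces $h_k$, $k\neq j_0$, do not cross $B_r(x_0)$, so their slice contributions to $[\cD_j\Delta\Omega_j]_{x'}\cap I_r(x')$ vanish and only $h_{j_0}$ contributes.

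Integrating the $h_{j_0}$-bound over $B'_r(x_0')$ in polar coordinates,
\[
\int_{B'_r(x_0')} \int_0^{|x'-x_0'|}\omega_0(s)\,ds\,dx' \leq N r^{d-1}\int_0^r\omega_0(s)\,ds = N r^d\,\omega_1(r),
\]
where we set $\omega_1(r):=\tfrac{1}{r}\int_0^r\omega_0(s)\,ds$. A Fubini computation gives $\int_0^1 \omega_1(t)/t\,dt \leq \int_0^1 \omega_0(s)/s\,ds<\infty$, so $\omega_1$ is indeed a Dini function in the sense of Definition~\ref{piece Dini}. The main obstacle is the geometric-separation step: verifying that for $r<r_0$ the surfaces other than $h_{j_0}$ cannot cross $B_r(x_0)$ in a way that would spoil the slice estimate, which is precisely what forces the specific formula for $r_0$ in terms of $\int_0^{R_0/2}\omega_0'(s)\,s\,ds$. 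Every other step reduces to routine Fubini and $C^{1,\text{Dini}}$-calculus.
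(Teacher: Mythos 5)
Your Fubini reduction and the estimate for the closest surface $h_{j_0}$ are fine, but the ``geometric-separation step'' is a genuine gap, not a technicality. The claim that for $r<r_0$ the graphs $h_k$, $k\neq j_0$, cannot cross $B_r(x_0)$ is false: the threshold $r_0=\tfrac{2}{3}\int_0^{R_0/2}\omega_0'(s)s\,ds$ is a fixed constant depending only on $\omega_0$ and $R_0$, whereas $\dist(x_0,\partial\cD_{j_0})$ and the gaps between interfaces can be arbitrarily small (interfaces may even touch); uniformity in the distance between sub-domains is the whole point of the lemma and of the paper. In the relevant regime $\dist(x_0,\partial\cD_{j_0})\ll r<r_0$, neighboring graphs such as $h_{j_0-1}$ (which already enters the symmetric difference $\cD_{j_0}\Delta\Omega_{j_0}$) do pass through $B_r(x_0)$, and for them you have no smallness of the gradient at $x_0'$, so your slice bound degenerates to a Lipschitz bound and the argument collapses. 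Worse, the conclusion you would reach, with $\omega_1(r)=\tfrac1r\int_0^r\omega_0(s)\,ds\le\omega_0(r)$, is quantitatively false: for $\omega_0(s)=Ls^{\mu}$, a $C^{1,\mu}$ graph forced by the ordering \eqref{eq10.42} to stay below the flat graph $h_{j_0}$, yet passing within distance $\sim r$ of $x_0$, can have slope $\lambda$ in $B_r'(x_0')$ as large as $c\,r^{\mu/(1+\mu)}$ (the constraint is only $\lambda^{(1+\mu)/\mu}\lesssim r$), which yields $r^{-d}|(\cD_{j_0}\Delta\Omega_{j_0})\cap B_r(x_0)|\sim r^{\mu/(1+\mu)}\gg r^{\mu}$. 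This is exactly the exponent in Remark \ref{rmk volume} and \cite[Lemma 5.1]{lv}, so no proof can deliver a modulus comparable to $\omega_0(r)$.

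The paper's proof does the opposite of excluding the other interfaces: it takes any graph $h_j$ meeting $B_r(x_0)$ and uses the non-crossing/confinement \eqref{eq10.42}, the flatness $\nabla_{x'}h_{j_0}(x_0')=0'$, and the common modulus $\omega_0$ of the gradients to show that such a graph is trapped near $h_{j_0}$; integrating the lower bound $\lambda-2\omega_0(2r+s)$ for the rate of separation then forces $\sup_{B_r'(x_0')}|\nabla_{x'}h_j|\le\omega_1(r):=\omega_0(2r+R(r))$, where $R(r)$ is defined by \eqref{relation r R00}. The constant $r_0$ merely calibrates the range of $r$ for which $R(r)\le R_0/2$ is available; it is not a separation scale. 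The volume bound is then the elementary $\lambda r\cdot r^{d-1}$ estimate, and the remaining (nontrivial) work is to verify that this larger modulus $\omega_1$ is still Dini, via the change of variables \eqref{relation dr dR} and the concavity of $\omega_0$ — a verification the statement needs because this specific $\omega_1$ is reused in \eqref{est A} and in Section \ref{sec thm3}. If you want to keep your slice decomposition, you must replace the separation claim by a proof of the slope bound $\sup_{B_r'(x_0')}|\nabla_{x'}h_k|\le N\omega_1(r)$ for every interface meeting $B_r(x_0)$, which is precisely the paper's key step.
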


\begin{proof}
Let $j$ be such that $(x',h_j(x'))\in B_r(x_0)$ for some $x'\in B_1'$. We denote the supremum of $|\nabla_{x'}h_{j}(x')|$ in $B'_{r}(x_0')$ by $\lambda$. Now we fix a point $(y',h_{j}(y'))\in B_r(x_0)$, and without loss of generality, we may assume that $\omega_{0}(r)\leq 1$ for all $0<r<1$. From \eqref{eq10.42} and the fact that $\nabla_{x'}h_{j_{0}}(x_0')=0$,
$$
|h_{j_{0}}(y')-h_{j_{0}}(x_0')|\le r,\quad
|\nabla_{x'}h_{j_{0}}(y')|\le \omega_{0}(2r),\quad
|\nabla_{x'}h_{j}(y')|\ge \lambda-\omega_{0}(2r).
$$
Because of \eqref{eq10.42}, we have
\begin{equation}\label{inetegral00}
\int_{0}^{R}(\lambda-2\omega_0(2r+s))\ ds\leq 3r
\end{equation}
for any $R\in(0,1/8)$. The left-hand side of \eqref{inetegral00} attains its maximum with respect to $R$ when $2\omega_0(2r+R)=\lambda$, which implies that
\begin{equation}\label{equality max}
R\omega_0(2r+R)-\int_{0}^{R}\omega_0(2r+s)\ ds\le 3r/2.
\end{equation}
That is,
\begin{equation}\label{relation r R}
\int_{0}^{R}\omega_0'(2r+s)s\ ds\le 3r/2.
\end{equation}
For a fixed $r$, the left-hand side of \eqref{relation r R} is a increasing function with respect to $R$. To get an upper bound for $\lambda$, we fix the number $R=R(r)(>2r)$ by \eqref{relation r R} such that
\begin{equation}\label{relation r R00}
\int_{0}^{R}\omega_0'(2r+s)s\ ds=3r/2.
\end{equation}
Denote
\begin{equation}\label{def omega1}
\omega_1(r):=\omega_0(2r+R).
\end{equation}
Then combined with $|\nabla_{x'}h_{j}(x')|\le \lambda$ in $B'_{r}(x_0')$, we obtain
\begin{align*}
|(\cD_{j}\Delta\Omega_{j})\cap B_{r}(x_{0})|\leq N\lambda r^d = N\omega_1(r)r^{d}.
\end{align*}

Next we are left to prove that $\omega_1(r)$ is a Dini function on $(0,r_{0})$. For this, it suffices to check the conditions in Definition \ref{piece Dini}. Obviously, \eqref{relation r R00} and \eqref{def omega1} imply that $\omega_1(r)$ is a continuous increasing function on $(0,r_{0})$ and $\omega_1(0)=\omega_0(0)=0$. Moreover, it follows from the increasing property and concavity of $\omega_0$ that
\begin{equation}\label{prop omega}
\int_{0}^{R}\omega_0'(2r+s)s\ ds\geq\frac{R^{2}}{2}\omega_0'(2r+R).
\end{equation}
From \eqref{relation r R00}, we have
\begin{equation}\label{relation dr dR}
\omega_0'(2r+R)R\ dR=\frac{3}{2}\ dr\quad\mbox{for~{\em a.e.}}~r.
\end{equation}
Therefore, in view of \eqref{relation r R00}--\eqref{relation dr dR}, the increasing property and concavity of $\omega_0$ again, and $2r<R$, we have
\begin{align*}
\int_{0}^{r_0}\frac{\omega_1(r)}{r}\ dr&=
\int_{0}^{r_0}\frac{\omega_0(2r+R)}{r}\ dr\\
&=\int_{0}^{R_{0}/2}\frac{R\omega_0(2r+R)\omega_0'(2r+R)}{\int_{0}^{R}\omega_0'(2r+s)s\ ds}\ dR\\
&\leq 2\int_{0}^{R_{0}/2}\frac{\omega_0(2r+R)}{R}\ dR\\
&\leq 2\int_{0}^{R_{0}/2}\frac{\omega_0(2r)+\omega_{0}(R)}{R}\ dR\\
&\leq 4\int_{0}^{R_{0}/2}\frac{\omega_0(R)}{R}\ dR<\infty.
\end{align*}
The lemma is proved.
\end{proof}

In the sequel, we extend $\omega_1(r)$ to be $\omega_1(r_0)$ when $r>r_0$.

\begin{remark}\label{rmk volume}
From the proof above, if we assume that the boundaries of $\cD_{1},\dots,\cD_{M}$ are $C^{1,\mu}$, then $\omega_0(s)=s^{\mu}$ and it follows from \eqref{equality max} that
$$R\leq\left(\frac{3(1+\mu)}{2\mu}r\right)^{\frac{1}{1+\mu}}.$$
Hence,
$$r^{-d}|(\cD_{j}\Delta\Omega_{j})\cap B_{r}(x_{0})|\leq Nr^{\frac{\mu}{1+\mu}},$$
which is the result in \cite[Lemma 5.1]{lv}.
\end{remark}

Let $\hat{A}^{(j)}\in\mathcal{A}$ be a constant function in $\cD_{j}$ which corresponds to the definition of $\omega_{A}(r)$ in \eqref{def omega A}. Similarly, $\hat{B}^{(j)}$ and $\hat{g}^{(j)}$ are defined in $\cD_{j}$. We define the piecewise constant (matrix-valued) functions
\begin{align*}
\bar{A}(x)=
\hat{A}^{(j)},\quad x\in\Omega_{j}.
\end{align*}
Using $\hat{B}^{(j)}$ and $\hat{g}^{(j)}$, we similarly define piecewise constant functions $\bar{B}$ and $\bar{g}$. From Lemma \ref{volume} and the boundedness of $A$, we have
\begin{align}\label{est A}
\fint_{B_{r}(x_{0})}|\hat{A}-\bar{A}|\ dx
\leq Nr^{-d}\sum_{j=1}^{l+1}|(\cD_{j}\Delta\Omega_{j})\cap B_{r}(x_{0})|\leq N\omega_1(r),
\end{align}
which is also true for $\hat{B}$ and $\hat{g}$.

\subsection{Some $L^{p}$ estimates and auxiliary lemmas}

First, let us recall the variably partially small BMO (bounded mean oscillation) condition (see, for instance, \cite{dk2}): there exists a sufficiently small constant $\gamma_{0}=\gamma_{0}(d,n,p,\nu)\in (0,1/2)$ and a constant $r_{0}\in(0,1)$ such that for any $r\in(0,r_{0})$ and $x_{0}\in B_{1}$ with $B_{r}(x_{0})\subset B_{1}$, in a coordinate system depending on $x_{0}$ and $r$, one can find a $\bar{A}(x^{d})$ satisfying
\begin{equation}\label{BMO}
\fint_{B_{r}(x_{0})}|A(x)-\bar A(x^{d})|\ dx\leq\gamma_{0}.
\end{equation}
The following lemma follows from \cite[Theorem 8.6]{dk2} by a standard localization argument which is similar to that in the proof of \cite[Lemma 4]{d}, using the Sobolev embedding theorem and a bootstrap argument.

\begin{lemma}\label{lem loc lq}
Let $p,q\in(1,\infty)$. Assume $A$ satisfies \eqref{BMO} with a sufficiently small constant $\gamma_{0}=\gamma_{0}(d,n,p,q,\nu,\Lambda)\in (0,1/2)$ and $u\in C_{\text{loc}}^{0,1}$ satisfies \eqref{systems} in $B_{1}$, where $f,g\in L^{q}(B_{1})$. Then
$$\|u\|_{W^{1,q}(B_{1/2})}\leq N(\|u\|_{L^{p}(B_{1})}+\|g\|_{L^{q}(B_{1})}+\|f\|_{L^{q}(B_{1})}).$$
In particular, if $q>d$, it holds that
$$|u|_{\gamma;B_{1/2}}\leq N(\|u\|_{L^{p}(B_{1})}+\|g\|_{L^{q}(B_{1})}+\|f\|_{L^{q}(B_{1})}),$$
where $\gamma=1-d/q$ and $N$ depends on $n,d,\nu,\Lambda,p,q$, and $r_{0}$.
\end{lemma}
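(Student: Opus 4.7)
The plan is to reduce the problem to \cite[Theorem 8.6]{dk2}, which provides the $W^{1,q}$ theory for the leading-order operator $D_\alpha(A^{\alpha\beta} D_\beta u)$ under \eqref{BMO}, and to treat the lower-order terms as forcing. Regrouping \eqref{systems} as
$$D_\alpha(A^{\alpha\beta} D_\beta u) = D_\alpha(g^\alpha - B^\alpha u) + (f - \hat B^\alpha D_\alpha u - Cu),$$
the cited theorem yields a $W^{1,q}$ bound in terms of this new right-hand side, which however still involves $\|Du\|_{L^q}$ through the $\hat B^\alpha D_\alpha u$ term.

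The first step is to pass from the global setting of \cite[Theorem 8.6]{dk2} to an interior estimate on $B_{1/2}$. Following the cutoff argument in \cite[Lemma 4]{d}, I would introduce cutoffs $\chi \in C_c^\infty(B_R)$ with $\chi \equiv 1$ on $B_r$ for $1/2 \le r < R \le 1$, apply the cited theorem to the equation satisfied by $u\chi$ (with extra forcing coming from $\nabla\chi$), and arrive at an estimate of the form
$$\|Du\|_{L^q(B_r)} \le \frac{N}{R-r}\|u\|_{L^q(B_R)} + N\|g\|_{L^q(B_R)} + N\|f\|_{L^q(B_R)} + N(R-r)\|Du\|_{L^q(B_R)}.$$
The final term, which collects the contributions of $\hat B^\alpha D_\alpha u$ and the cross terms from differentiating $\chi$, can then be absorbed by a standard iteration on a geometric sequence of radii $1/2 = r_0 < r_1 < \cdots \to 1$.

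Next comes the bootstrap in integrability, needed because only $u \in L^p(B_1)$ is assumed a priori while the data sit in $L^q(B_1)$. I would iterate as follows: applying the previous step with $q$ replaced by an intermediate exponent $p_k$, on nested balls $B_{\rho_k} \supset B_{\rho_{k+1}}$, together with Sobolev embedding, promotes $u \in L^{p_k}(B_{\rho_k})$ to $u \in L^{p_{k+1}}(B_{\rho_{k+1}})$ with $p_{k+1}$ chosen as the Sobolev exponent $dp_k/(d-p_k)$ when $p_k < d$, as any finite exponent when $p_k = d$, and as $+\infty$ when $p_k > d$. After finitely many iterations one has $p_k \ge q$, at which point a final application of the localized $L^q$ estimate on $B_{1/2}$ delivers the desired bound in terms of $\|u\|_{L^p(B_1)}$, $\|g\|_{L^q(B_1)}$, and $\|f\|_{L^q(B_1)}$.

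The Hölder statement with $\gamma = 1 - d/q$ for $q > d$ is then immediate from Morrey's embedding applied to the $W^{1,q}$ bound. The principal technical issue will be bookkeeping: one must interleave the radius shrinking in the Caccioppoli-type absorption with the integrability boosting so that every intermediate use of \cite[Theorem 8.6]{dk2} is made with a uniformly small $\gamma_0$ (which is why $\gamma_0$ is permitted to depend on both $p$ and $q$), while still arranging that all estimates conclude on the fixed ball $B_{1/2}$.
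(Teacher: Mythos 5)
Your overall skeleton (localize \cite[Theorem 8.6]{dk2} as in \cite[Lemma 4]{d}, treat lower-order terms as forcing, bootstrap with Sobolev embedding, finish with Morrey) is exactly the route the paper intends -- the paper only sketches this in one sentence. However, there is a concrete gap in your first step. The claimed inequality
$$\|Du\|_{L^q(B_r)} \le \frac{N}{R-r}\|u\|_{L^q(B_R)} + N\|g\|_{L^q(B_R)} + N\|f\|_{L^q(B_R)} + N(R-r)\|Du\|_{L^q(B_R)}$$
is not what the cutoff argument produces. Writing the equation for $u\chi$ and applying the global Dirichlet estimate (Lemma \ref{solvability}), the cross term $A^{\alpha\beta}D_\beta u\,D_\alpha\chi$ is an $f$-type forcing of size $N(R-r)^{-1}|Du|$ and the term $\chi\hat B^\alpha D_\alpha u$ is of size $\Lambda|Du|$; measured in $L^q$ they enter with factors $N(R-r)^{-1}$ and $N$, respectively -- never with a positive power of $(R-r)$. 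With those factors the coefficient of $\|Du\|_{L^q(B_R)}$ is not small, so the standard iteration lemma (\cite[Ch.~V, Lemma 3.1]{g}) does not absorb it. The only way to produce your $(R-r)$ factor is to invoke a scaled interior estimate on balls of radius $\sim R-r$ for the operator, i.e.\ essentially the estimate you are trying to prove, which is circular.

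The repair -- and this is what the paper's phrase ``Sobolev embedding theorem and a bootstrap argument'' actually refers to, and what the paper carries out explicitly in Section \ref{proof thm1} when reducing to systems without lower-order terms -- is to measure every $Du$-dependent forcing term at the lower Sobolev exponent rather than at the top exponent. Concretely, put $f-\hat B^\alpha D_\alpha u-Cu$ (and likewise the cutoff cross terms) through an auxiliary Poisson problem $\Delta v=(\cdots)\chi$ with the $W^{2,q_1}$ estimate, $1/q_1=1/q+1/d$, so that after Sobolev embedding they reappear as divergence-form data $Dv\in L^{q}$ controlled by $\|Du\|_{L^{q_1}}$; equivalently, use that $L^{q_1}\hookrightarrow W^{-1,q}$. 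Then at each bootstrap stage the $Du$-terms are controlled by the integrability already established at the previous stage (starting from $u\in W^{1,p}$, or from the qualitative $C^{0,1}_{\text{loc}}$ assumption), no absorption at the exponent $q$ is needed, and your iteration of Sobolev exponents $p_{k+1}=dp_k/(d-p_k)$ on shrinking balls goes through as you describe. The Morrey step at the end is fine.
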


We next recall the $W^{1,p}$-solvability for elliptic systems with leading coefficients which satisfy \eqref{BMO} in $\bar{B}_{1}$. To be precise, we choose a cut-off function $\eta\in C_{0}^{\infty}(B_{1})$ with
$$0\leq\eta\leq1,\quad \eta\equiv1~\mbox{in}~B_{3/4},\quad|\nabla\eta|\leq 8.$$
Let $\tilde{\mathcal{L}\ }$ be the elliptic operator defined by
$$
\tilde{\mathcal{L}\ }u:=D_{\alpha}(\tilde{A}^{\alpha\beta}D_{\beta}u),
$$
where $\tilde{A}^{\alpha\beta}=\eta A^{\alpha\beta}(x)+\nu(1-\eta)\delta_{\alpha\beta}\delta_{ij}$, $\delta_{\alpha\beta}$ and $\delta_{ij}$ are the Kronecker delta symbols. Consider
\begin{align}\label{approxi sol}
\begin{cases}
\tilde{\mathcal{L}\ }u=\Div g+f&\quad\mbox{in}~B_{1},\\
u=0&\quad\mbox{on}~\partial B_{1},
\end{cases}
\end{align}
where $g, f\in L^{p}(B_{1})$. Then the coefficients $\tilde{A}^{\alpha\beta}(x)$ and the boundary $\partial B_{1}$ satisfy the Assumption 8.1 ($\gamma$) in \cite{dk2} for sufficiently small $\gamma$. Applying \cite[Theorem 8.6]{dk2} to our case, we have
\begin{lemma}\label{solvability}
For any $p\in(1,\infty)$, the following hold.
\begin{enumerate}
\item
For any $u\in W_{0}^{1,p}(B_{1})$,
$$\|u\|_{W^{1,p}(B_{1})}\leq N\big(\|g\|_{L^{p}(B_{1})}+\|f\|_{L^{p}(B_{1})}\big),$$
where $N$ depends on $d,n,p,\nu,\Lambda$, and $r_{0}$.

\item
For any $g, f\in L^{p}(B_{1})$, \eqref{approxi sol} admits a unique solution $u\in  W_{0}^{1,p}(B_{1})$.
\end{enumerate}
\end{lemma}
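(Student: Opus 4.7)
The plan is to verify the hypotheses of \cite[Theorem 8.6]{dk2} for the modified operator $\tilde{\mathcal{L}}$ on the smooth domain $B_1$ and invoke that theorem directly. The role of the cutoff $\eta$ is to force $\tilde A$ to equal the constant coefficient matrix $\nu I$ in a neighborhood of $\partial B_1$, so that the boundary regularity portion of Assumption 8.1 of \cite{dk2} is trivially satisfied and solvability up to the boundary is straightforward.

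The first step is ellipticity: both $A^{\alpha\beta}$ and $\nu \delta_{\alpha\beta}\delta_{ij}$ satisfy the ellipticity bound with constant $\nu$ (and are bounded by $\nu^{-1}$), so the convex combination $\tilde A = \eta A + \nu(1-\eta) I$ inherits uniform ellipticity with the same constant $\nu$ and remains uniformly bounded. The second step is to check the variably partially small BMO condition \eqref{BMO} for $\tilde A$ after possibly shrinking $r_0$. For any $B_r(x_0) \subset B_1$ with $r < r_0$, three cases arise: (a) if $B_r(x_0) \subset B_{3/4}$, then $\tilde A = A$ on $B_r(x_0)$ and the hypothesized smallness transfers directly; (b) if $B_r(x_0)$ is disjoint from $\textup{supp}(\eta)$, then $\tilde A \equiv \nu I$ and the BMO oscillation vanishes; (c) if $B_r(x_0)$ intersects the transition annulus $B_1 \setminus B_{3/4}$, the bound $|\nabla \eta| \leq 8$ forces $\eta$ to vary by at most $Nr$ on $B_r(x_0)$, so a triangle inequality yields
\[
\fint_{B_r(x_0)} |\tilde A - \bar A(x^d)|\, dx \leq \fint_{B_r(x_0)} |A - \bar A(x^d)|\, dx + N(\nu,\Lambda)\, r,
\]
where $\bar A(x^d)$ is the convex combination of the $x^d$-dependent approximant for $A$ supplied by the hypothesis with the constant matrix $\nu I$, weighted by a fixed mean value of $\eta$ on the ball. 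Choosing $r_0$ and the smallness constant for $A$ small enough makes the right-hand side at most $\gamma$.

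With ellipticity, the partially small BMO condition, and smoothness of $\partial B_1$ all in hand, \cite[Theorem 8.6]{dk2} applies directly to $\tilde{\mathcal{L}}$ on $B_1$ and yields precisely the a priori estimate in part (1) for any $u \in W_0^{1,p}(B_1)$ as well as the unique solvability in $W_0^{1,p}(B_1)$ in part (2) for any $g, f \in L^p(B_1)$. The main (mild) technical obstacle is case (c) above: one must ensure that the oscillation introduced by the smooth cutoff does not spoil the small BMO hypothesis. This is handled purely by the smoothness of $\eta$ and a further shrinkage of $r_0$; every remaining hypothesis is either immediate from the construction of $\tilde A$ or inherited verbatim from $A$.
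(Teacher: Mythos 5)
Your proposal is correct and follows essentially the same route as the paper: the paper simply observes that $\tilde{A}^{\alpha\beta}$ and $\partial B_1$ satisfy Assumption 8.1($\gamma$) of \cite{dk2} for sufficiently small $\gamma$ and then cites \cite[Theorem 8.6]{dk2}. Your case analysis (ellipticity of the convex combination, transfer of the small BMO condition away from, inside, and across the transition region of $\eta$, and constant coefficients near $\partial B_1$) just makes explicit the verification the paper leaves implicit, and it is sound.
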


We note that by \eqref{BMO}, Lemmas \ref{lem loc lq} and \ref{solvability} are applicable in our case. Next we consider systems with coefficients depending on $x^{d}$ alone. We denote
$$\mathcal{L}_{0}u:=D_{\alpha}(\bar{A}^{\alpha\beta}(x^{d})D_{\beta}u),$$
and
$$\bar{U}:=\bar{A}^{d\beta}(x^{d})D_{\beta}u,\quad\mbox{that~is},\quad \bar{U}^{i}=\bar{A}_{ij}^{d\beta}(x^{d})D_{\beta}u^{j},\quad i=1,\ldots,n.$$

\begin{lemma}\label{lemma xn}
Let $p\in(0,\infty)$. Assume $u\in C_{\text{loc}}^{0,1}$ satisfies $\mathcal{L}_{0}u=0$ in $B_{1}$. Then for any $q\in(1,\infty)$, there exists a constant $N=N(n,d,p,q,\nu,\Lambda)$ such that for any matrix-valued constant $\mathbf c\in\mathbb R^{n\times(d-1)}$,
\begin{align}\label{DDu}
\|DD_{x'}u\|_{L^{\infty}(B_{1/2})}\leq N\|D_{x'}u-\mathbf c\|_{L^{p}(B_{1})},
\end{align}
and
\begin{align}\label{DU}
\|D\bar{U}\|_{L^{\infty}(B_{1/2})}\leq N\|D_{x'}u-\mathbf c\|_{L^{p}(B_{1})}.
\end{align}
\end{lemma}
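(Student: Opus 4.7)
The plan is to exploit the fact that the coefficients $\bar A^{\alpha\beta}$ depend only on $x^d$, so $\mathcal L_0$ commutes with tangential differentiation. The first ingredient I would need is a local $L^\infty$--$L^p$ estimate for $\mathcal L_0$-solutions for every $p\in(0,\infty)$: if $v\in C^{0,1}_{\text{loc}}$ solves $\mathcal L_0 v=0$ in $B_1$, then $\|v\|_{L^\infty(B_{1/2})}\le N\|v\|_{L^p(B_1)}$. For $p>1$ this is Lemma \ref{lem loc lq} with some $q>d$ (the condition \eqref{BMO} is automatic since $\bar A$ is independent of $x'$) followed by Sobolev embedding. For $p\in(0,1]$ I would reduce to the case $p=1$ via the elementary interpolation $\|v\|_{L^1(B_R)}\le\|v\|_{L^p(B_R)}^p\|v\|_{L^\infty(B_R)}^{1-p}$ combined with a standard iteration lemma on nested balls to absorb the $L^\infty$-term.

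Since $\bar A$ is $x'$-invariant, tangential difference quotients show that $D_{x'}^k u$ also solves $\mathcal L_0 w=0$ for every $k\ge 1$; in particular, for each $\alpha'<d$ the function $v_{\alpha'}:=D_{\alpha'}u-\mathbf c_{\alpha'}$ is a solution, where $\mathbf c_{\alpha'}$ is the $\alpha'$-th column of $\mathbf c$. Iterating the local $L^\infty$--$L^p$ bound together with Lemma \ref{lem loc lq} through a chain of shrinking balls then gives
\[
\|D_{x'}^k u\|_{L^\infty(B_{1/2})}\le N\|D_{x'}u-\mathbf c\|_{L^p(B_1)},\qquad k\ge 1,
\]
which already takes care of the tangential piece $D_{x'}^2 u$ on the left of \eqref{DDu}.

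The hard part is controlling $D_d D_{x'} u$ and $D\bar U$, because $D_d D_{x'} u$ generically jumps across the interfaces where $\bar A$ is discontinuous in $x^d$, so one cannot hope to place it in $W^{1,q}$ globally. The right surrogate is the conormal combination $\bar U(v):=\bar A^{d\beta}(x^d)D_\beta v$, which remains continuous across those interfaces. Because $\bar A^{d\beta}$ depends only on $x^d$, one has the identities
\[
D_{x'}\bar U(v)=\bar U(D_{x'}v),\qquad D_d\bar U(v)=-\sum_{\alpha'<d}\bar A^{\alpha'\beta}(x^d)D_{\alpha'}D_\beta v,
\]
so $D\bar U(v)$ is linear in $D_{x'}Dv$. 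Since $D_{x'}v$ again solves $\mathcal L_0 w=0$, applying Lemma \ref{lem loc lq} to $D_{x'}v$ bounds $\|D_{x'}Dv\|_{L^q(B_{1/2})}$ by $\|v\|_{L^p(B_1)}$ for every $q\in(1,\infty)$. Thus $\bar U(v)\in W^{1,q}(B_{1/2})$, and Sobolev embedding with $q>d$ promotes this to the key $L^\infty$-bound $\|\bar U(v)\|_{L^\infty(B_{1/2})}\le N\|v\|_{L^p(B_1)}$.

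Applying this bound with $v=v_{\alpha'}$ and using $D_{\alpha'}\bar U=\bar U(D_{\alpha'}u)$ yields $\|D_{x'}\bar U\|_{L^\infty(B_{1/2})}\le N\|D_{x'}u-\mathbf c\|_{L^p(B_1)}$. Solving $D_d u=(\bar A^{dd})^{-1}(\bar U-\bar A^{d\beta'}D_{\beta'}u)$ from the definition of $\bar U$ and differentiating tangentially (recalling $\bar A$ is independent of $x'$) produces
\[
D_d D_{x'}u=(\bar A^{dd})^{-1}\bigl(D_{x'}\bar U-\bar A^{d\beta'}D_{x'}D_{\beta'}u\bigr),
\]
which is now bounded in $L^\infty(B_{1/2})$ and completes \eqref{DDu}. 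Finally, $D_d\bar U=-\sum_{\alpha'}\bar A^{\alpha'\beta}D_{\alpha'}D_\beta u$ is a linear combination of $D_{x'}^2 u$ and $D_d D_{x'}u$, both already controlled; together with the bound on $D_{x'}\bar U$ this yields \eqref{DU}.
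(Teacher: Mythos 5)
Your argument is correct and is essentially the paper's proof: the same interpolation--iteration trick to reach $p\in(0,1]$, tangential difference quotients justified by the $x'$-independence of $\bar A$, Lemma \ref{lem loc lq} with $q>d$ plus Sobolev embedding to pass to $L^\infty$, the identities \eqref{Dx U} for $\bar U$, and the invertibility of $\bar A^{dd}$ to recover the normal derivative, applied to the solution $D_{x'}u-\mathbf c$. The only blemish is the displayed claim $\|D_{x'}^k u\|_{L^\infty(B_{1/2})}\le N\|D_{x'}u-\mathbf c\|_{L^p(B_1)}$ stated for all $k\ge 1$, which fails at $k=1$ (the left side is not invariant under shifting $D_{x'}u$ by $\mathbf c$); since you only use it for tangential derivatives of $D_{x'}u-\mathbf c$, i.e.\ $k\ge 2$, this is harmless.
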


\begin{proof}
It directly follows from Lemma \ref{lem loc lq} that
\begin{align*}
\|u\|_{W^{1,q}(B_{4/5})}\leq N\|u\|_{L^{2}(B_{1})}.
\end{align*}
Then by the Sobolev embedding theorem for $q>d$, we have
\begin{align}\label{infinit u}
\|u\|_{L^{\infty}(B_{4/5})}\leq N\|u\|_{L^{2}(B_{1})}.
\end{align}
For $0<p<1<\infty$, by using the interpolation inequality, we get
$$\|u\|_{L^{2}(B_{5/6})}\leq \|u\|_{L^{p}(B_{5/6})}^{{p}/{2}}\|u\|_{L^{\infty}(B_{5/6})}^{1-{p}/{2}}.$$
Thus, combining \eqref{infinit u} with slightly smaller domain, and H\"{o}lder's inequality, we obtain
\begin{align*}
\|u\|_{L^{\infty}(B_{4/5})}&\leq N\|u\|_{L^{p}(B_{5/6})}^{{p}/{2}}\|u\|_{L^{\infty}(B_{5/6})}^{1-{p}/{2}}\\
&\leq\frac{1}{2}\|u\|_{L^{\infty}(B_{5/6})}+N\|u\|_{L^{p}(B_{5/6})},\quad p>0.
\end{align*}
By a well-known iteration argument (see, for instance, \cite[Lemma 3.1 of Ch.V]{g}), we get
\begin{align}\label{u infinite}
\|u\|_{L^{\infty}(B_{4/5})}\leq N\|u\|_{L^{p}(B_{1})},\quad p>0.
\end{align}

Now, we define the finite difference quotient
$$
\delta_{h,k}f(x):=\frac{f(x+he_{k})-f(x)}{h}
$$
where $k=1,\dots,d-1$, $0<|h|<1/12$. Since $\bar{A}^{\alpha\beta}(x^{d})$ are independent of $x'$, we have $\mathcal{L}_{0}(\delta_{h,k}u)=0$ in $B_{1}$. We thus use Lemma \ref{lem loc lq} and \eqref{u infinite} to get
\begin{align*}
\|\delta_{h,k}u\|_{W^{1,q}(B_{1/2})}\leq N\|\delta_{h,k}u\|_{L^{2}(B_{2/3})}\leq N\|D_{x'}u\|_{L^{2}(B_{3/4})}
\leq N\|u\|_{L^{2}(B_{4/5})}\leq N\|u\|_{L^{p}(B_{1})},\quad p>0.
\end{align*}
Letting $h\rightarrow0$, we obtain
\begin{align}\label{Dx'u WW}
\|D_{x'}u\|_{W^{1,q}(B_{1/2})}\leq N\|u\|_{L^{p}(B_{1})},\quad p>0.
\end{align}
Moreover, notice that in $B_{1}$,
\begin{align}\label{Dx U}
D_{d}\bar{U}=-\sum_{\alpha=1}^{d-1}\sum_{\beta=1}^{d}\bar{A}^{\alpha\beta}D_{\alpha\beta}u,\quad D_{x'}\bar{U}=\sum_{\beta=1}^{d}\bar{A}^{d\beta}D_{x'}D_{\beta}u.
\end{align}
Then by using Lemma \ref{lem loc lq}, \eqref{u infinite}, \eqref{Dx'u WW}, and the boundedness of $\bar{A}$, we obtain
\begin{align}\label{esti U}
\|\bar{U}\|_{W^{1,q}(B_{1/2})}&=\|\bar{U}\|_{L^{q}(B_{1/2})}+\|D\bar{U}\|_{L^{q}(B_{1/2})}\nonumber\\
&\leq N\|Du\|_{L^{q}(B_{1/2})}+N\Big(\|D_{x'}\bar{U}\|_{L^{q}(B_{1/2})}+\|D_{d}\bar{U}\|_{L^{q}(B_{1/2})}\Big)\nonumber\\
&\leq N\|u\|_{W^{1,q}(B_{1/2})}+N\|DD_{x'}u\|_{L^{q}(B_{1/2})}\nonumber\\
&\leq N\|u\|_{L^{2}(B_{2/3})}+N\|D_{x'}u\|_{W^{1,q}(B_{1/2})}\leq N\|u\|_{L^{p}(B_{1})}.
\end{align}
Combining \eqref{Dx'u WW} and \eqref{esti U}, we get
$$\|D_{x'}u\|_{W^{1,q}(B_{1/2})}+\|\bar{U}\|_{W^{1,q}(B_{1/2})}\leq N\|u\|_{L^{p}(B_{1})},\quad p>0.$$
By the Sobolev embedding theorem for $q>d$, we have
$$\|D_{x'}u\|_{L^{\infty}(B_{1/2})}+\|\bar{U}\|_{L^{\infty}(B_{1/2})}\leq N\|u\|_{L^{p}(B_{1})},\quad p>0.$$
By using $\bar{A}^{dd}(x^{d})\geq\nu$, we then have
$$\|Du\|_{L^{\infty}(B_{1/2})}\leq N\|u\|_{L^{p}(B_{1})},\quad p>0.$$
Similarly, by using the fact that the coefficients of $\mathcal{L}_{0}$ are independent of $x'$ and hence, for any matrix-valued constant $\mathbf c\in\mathbb R^{n\times(d-1)}$, we have $\mathcal{L}_{0}(D_{x'}u-\mathbf c)=0$ in $B_{1}$. Then
$$\|DD_{x'}u\|_{L^{\infty}(B_{1/2})}\leq N\|D_{x'}u-\mathbf c\|_{L^{p}(B_{1})},\quad p>0.$$
Thus, \eqref{DDu} is proved and \eqref{DU} follows from \eqref{Dx U} and \eqref{DDu}.
\end{proof}

Finally, we will also use the following lemmas, which are Lemma 2.7 in \cite{dk} and Lemma 4.1 in \cite{dek}.

\begin{lemma}\label{lemma omiga}
Let $\omega$ be a nonnegative bounded function. Suppose there is $c_{1},c_{2}>0$ and $0<\kappa<1$ such that for $\kappa t\leq s\leq t$ and $0<t<r$,
\begin{align}\label{equivalence}
c_{1}\omega(t)\leq \omega(s)\leq c_{2}\omega(t).
\end{align}
Then, we have
$$\sum_{i=0}^{\infty}\omega(\kappa^{i}r)\leq N\int_{0}^{r}\frac{\omega(t)}{t}\ dt,$$
where $N=N(\kappa,c_{1},c_{2})$.
\end{lemma}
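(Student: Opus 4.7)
The plan is to bound each summand $\omega(\kappa^{i}r)$ by the integral of $\omega(s)/s$ over the dyadic-type interval $[\kappa^{i+1}r,\kappa^{i}r]$, and then exploit the fact that these intervals partition $(0,r]$ so that the individual bounds telescope into the single integral on the right-hand side of the claim.

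To carry this out, I would fix $i\geq 0$ and apply \eqref{equivalence} with $t=\kappa^{i}r$. For every $s\in[\kappa^{i+1}r,\kappa^{i}r]$, the lower comparability gives $c_{1}\omega(\kappa^{i}r)\leq\omega(s)$. Dividing by $s$, integrating over this interval, and using $\int_{\kappa^{i+1}r}^{\kappa^{i}r}ds/s=\log(1/\kappa)$, one obtains
\[
\omega(\kappa^{i}r)\;\leq\;\frac{1}{c_{1}\log(1/\kappa)}\int_{\kappa^{i+1}r}^{\kappa^{i}r}\frac{\omega(s)}{s}\,ds.
\]
Summing over $i\geq 0$ then yields the desired inequality with the explicit constant $N=(c_{1}\log(1/\kappa))^{-1}$. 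In particular, only the lower comparability constant $c_{1}$ plays a role, while $c_{2}$ is not needed; the lemma could be stated with the one-sided hypothesis $\omega(s)\geq c_{1}\omega(t)$ on $\kappa t\leq s\leq t$.

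I do not foresee any serious obstacle. The only point that calls for a small comment is the endpoint at $i=0$, where \eqref{equivalence} is stated on the open interval $0<t<r$ rather than at $t=r$; this is handled either by a harmless limit $t\to r^{-}$ using the boundedness of $\omega$, or by reading the hypothesis with a closed upper endpoint, which is the interpretation implicit in how the lemma will be invoked in the applications. Everything else is a routine combination of the comparability hypothesis with the identity $\int_{\kappa^{i+1}r}^{\kappa^{i}r}ds/s=\log(1/\kappa)$ and the additivity of the integral across the partition $(0,r]=\bigsqcup_{i\geq 0}(\kappa^{i+1}r,\kappa^{i}r]$.
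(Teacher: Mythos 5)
Your dyadic comparison $\omega(\kappa^{i}r)\leq (c_{1}\log(1/\kappa))^{-1}\int_{\kappa^{i+1}r}^{\kappa^{i}r}\omega(s)s^{-1}\,ds$, summed over $i$, is correct and is essentially the same standard argument behind the cited result (the paper does not prove this lemma but quotes Lemma 2.7 of \cite{dk}); your observation that only the lower comparability constant $c_{1}$ is needed is also right. One small caveat: for the $i=0$ term, the alternative of passing to the limit $t\to r^{-}$ using only boundedness of $\omega$ does not actually control $\omega(r)$ (boundedness gives no relation between $\omega(r)$ and the values $\omega(s)$, $s<r$), so you should rely on your second option and read the hypothesis with the closed endpoint $t\le r$, which is how the lemma is invoked in the paper.
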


\begin{lemma}\label{lemma weak}
Let $\cD$ be a bounded domain in $\mathbb R^{d}$ satisfying
\begin{equation}\label{condition D}
|\cD_{r}(x)|\geq A_{0}r^{d}\quad\mbox{for~all}~x\in\overline{\cD}~\mbox{and}~r\in(0,\mbox{diam}\ \cD],
\end{equation}
where $A_{0}>0$ is a constant. Let $p\in (1,\infty)$ and $T$ be a bounded linear operator on $L^{p}(\cD)$. Suppose that for any $\bar{y}\in \cD$ and $0<r<\mu~ \mbox{diam}\ \cD$,  we have
$$\int_{\cD\setminus B_{cr}(\bar{y})}|Tb|\leq C_{0}\int_{\cD_{r}(\bar{y})}|b|$$
whenever $b\in L^{p}(\cD)$ is supported in $\cD_{r}(\bar{y})$, $\int_{\cD}b=0$, and $c>1$, $C_{0}>0$, $\mu\in(0,1)$ are constants. Then for $g\in L^{p}(\cD)$ and any $t>0$, we have
$$|\{x\in \cD: |Tg(x)|>t\}|\leq\frac{N}{t}\int_{\cD}|g|,$$
where $N=N(d,c,C_{0},\cD,A_{0},\mu,\|T\|_{L^p\rightarrow L^p})$ is a constant.
\end{lemma}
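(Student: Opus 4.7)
My plan is to prove Lemma \ref{lemma weak} by a Calderón--Zygmund decomposition argument, exploiting the fact that the measure condition \eqref{condition D} makes $\cD$ a space of homogeneous type, and then splitting $g$ into a ``good'' part handled by the $L^p\to L^p$ boundedness and a ``bad'' part handled by the given cancellation hypothesis.

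First, we may assume $t > \|g\|_{L^1(\cD)}/|\cD|$, since otherwise $|\cD|\le \|g\|_{L^1(\cD)}/t$ already gives the conclusion trivially. Under this normalization I would perform a CZ stopping-time decomposition on $\cD$: using \eqref{condition D} and a Vitali/Besicovitch-type covering, one produces a countable family of pairwise disjoint ``cubes'' $Q_k=\cD_{r_k}(\bar y_k)$ with $r_k < \mu\,\mathrm{diam}\,\cD$ (by choosing $t$ large enough, or by declaring any would-be large cube to belong to the exceptional set directly, which contributes at most $N|\cD|\le N\|g\|_{L^1(\cD)}/t$), such that
\begin{equation*}
|g(x)|\le t \text{ for a.e. } x\in \cD\setminus\textstyle\bigcup_k Q_k,\qquad \fint_{Q_k}|g|\le N t,\qquad \sum_k|Q_k|\le \frac{N}{t}\int_\cD|g|.
\end{equation*}
Split $g=g_1+g_2$, where $g_1:=g\,\mathbf{1}_{\cD\setminus\bigcup_k Q_k}+\sum_k (g)_{Q_k}\mathbf 1_{Q_k}$ and $g_2:=\sum_k b_k$ with $b_k:=(g-(g)_{Q_k})\mathbf 1_{Q_k}$; each $b_k$ is supported in $Q_k$, has mean zero, and satisfies $\int_{Q_k}|b_k|\le 2\int_{Q_k}|g|$.

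For $g_1$, I would use $|g_1|\le Nt$ pointwise and $\|g_1\|_{L^1(\cD)}\le N\|g\|_{L^1(\cD)}$ to get $\|g_1\|_{L^p(\cD)}^p\le (Nt)^{p-1}\|g_1\|_{L^1(\cD)}$; then by the $L^p$-boundedness of $T$ and Chebyshev,
\begin{equation*}
\bigl|\{x\in\cD:|Tg_1(x)|>t/2\}\bigr|\le \left(\frac{2}{t}\right)^{\!p}\|T\|_{L^p\to L^p}^p\,\|g_1\|_{L^p(\cD)}^p\le \frac{N}{t}\int_\cD|g|.
\end{equation*}
For $g_2$, let $E:=\bigcup_k (\cD\cap B_{cr_k}(\bar y_k))$; since \eqref{condition D} is not needed for this upper bound, $|E|\le N\sum_k r_k^d\le (N/A_0)\sum_k|Q_k|\le N\|g\|_{L^1(\cD)}/t$. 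On $\cD\setminus E$ I would apply the hypothesis to each atom $b_k$:
\begin{equation*}
\int_{\cD\setminus E}|Tg_2|\le \sum_k\int_{\cD\setminus B_{cr_k}(\bar y_k)}|Tb_k|\le C_0\sum_k\int_{Q_k}|b_k|\le 2C_0\int_\cD|g|,
\end{equation*}
so Chebyshev gives $|\{x\in\cD\setminus E:|Tg_2(x)|>t/2\}|\le (4C_0/t)\int_\cD|g|$. Combining the estimates for $g_1$ and $g_2$ and absorbing $|E|$ yields the desired bound.

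The main technical obstacle I foresee is organizing the stopping-time decomposition on the irregular domain $\cD$: the usual dyadic cube machinery is not available, so I would rely on the fact that \eqref{condition D} combined with the doubling property of Lebesgue measure on balls makes $(\cD,|\cdot|)$ a space of homogeneous type, allowing a Christ-type dyadic construction, or equivalently a Vitali covering lemma applied to the level set of the maximal function $M_{\cD} g(x):=\sup_{r}\fint_{\cD_r(x)}|g|$. A secondary subtlety is making sure that no ``stopping cube'' $Q_k$ has radius exceeding $\mu\,\mathrm{diam}\,\cD$; this is handled by restricting to $t>\|g\|_{L^1}/|\cD|$ and choosing the decomposition threshold so that all selected cubes are strictly smaller than $\mu\,\mathrm{diam}\,\cD$, with the residual case absorbed into the trivial bound on $|\cD|$.
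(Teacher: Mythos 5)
Your proposal is correct and follows essentially the same route as the paper: the paper proves this lemma via its weighted generalization (Lemma \ref{weak general}, with $w=1$), which is exactly your Calder\'on--Zygmund argument on the space of homogeneous type given by \eqref{condition D} (Christ-type dyadic cubes, good/bad splitting, Chebyshev plus $L^p$-boundedness for the good part, the cancellation hypothesis plus the measure bound on the union of dilated balls for the bad part). The only cosmetic differences are in how the trivial regime and the restriction $r_k<\mu\,\mathrm{diam}\,\cD$ are absorbed (the paper compares $t^{-1}\int|g|$ with the measure of a fixed dyadic generation, you compare $t$ with $\|g\|_{L^1}/|\cD|$), and note that your bound $\sum_k r_k^d\leq N\sum_k|Q_k|$ does in fact use \eqref{condition D} (or the inner-ball property of the Christ cubes), contrary to your parenthetical remark, though your displayed inequality already invokes $A_0$ correctly.
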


We note that in \cite[Lemma 4.1]{dek} the exponent $p$ is assumed to be $2$. However, by a slight modification of the proof there, the result can be easily extended to general $p\in (1,\infty)$. See also Lemma \ref{weak general} below by taking $w=1$.

\section{Proofs of Theorem \ref{thm1} and Corollary \ref{coro}}\label{proof thm1}

In this section, we give the proofs of Theorem \ref{thm1} and Corollary \ref{coro}.

\subsection{Proof of Theorem \ref{thm1}}

We first show that by using the global $W^{2,p}$ estimate for the Laplacian operator, Sobolev embedding theorem, and Lemma \ref{lem loc lq}, we only need to consider systems without lower order terms. We rewrite \eqref{systems} as
$$D_{\alpha}(A^{\alpha\beta}D_{\beta}u)=\Div (g-Bu+Dv)\quad \mbox{in}~\cD=B_{1}\subset B_{2},$$
where $v\in W^{2,p}(B_{2})$ is a strong solution to
\begin{align*}
\begin{cases}
\Delta v=(f-\hat{B}^{\alpha}D_{\alpha}u-Cu)\chi_{B_{1}}&\quad\mbox{in}~B_{2},\\
v=0&\quad\mbox{on}~\partial B_{2}.
\end{cases}
\end{align*}
Then by the global $W^{2,p}$ estimate, we have
\begin{align}\label{W2p v}
\|v\|_{W^{2,p}(B_{2})}&\leq N\|f-\hat{B}^{\alpha}D_{\alpha}u-Cu\|_{L^{p}(B_{1})}\nonumber\\
&\leq N\left(\|u\|_{W^{1,p}(B_{1})}+\|f\|_{L^{\infty}(B_{1})}\right).
\end{align}
By Lemma \ref{lem loc lq}, the Sobolev embedding theorem, and \eqref{W2p v}, we have
\begin{align*}
\|u\|_{W^{1,p^{*}}(B_{1/2})}&\leq N\left(\|u\|_{L^{p}(B_{1})}+\|g-Bu+Dv\|_{L^{p^{*}}(B_{1})}\right)\\
&\leq N\left(\|u\|_{W^{1,p}(B_{1})}+\|g\|_{L^{\infty}(B_{1})}+\|Dv\|_{W^{1,p}(B_{1})}\right)\\
&\leq N\left(\|u\|_{W^{1,p}(B_{1})}+\|g\|_{L^{\infty}(B_{1})}+\|f\|_{L^{\infty}(B_{1})}\right),
\end{align*}
where ${1}/{p^{*}}={1}/{p}-{1}/{d}$ for $p<d$ and $p^{*}\in(p,\infty)$ is arbitrary if $p\geq d$. By a bootstrap argument, we can find some $q>d$ so that $u\in W_{\text{loc}}^{1,q}(B_{1})$, and
$$\|u\|_{W^{1,q}(B_{1/2})}\leq N\left(\|u\|_{W^{1,p}(B_{1})}+\|g\|_{L^{\infty}(B_{1})}+\|f\|_{L^{\infty}(B_{1})}\right).$$
By Morrey's inequality, we obtain $u\in C^{\gamma}(B_{1/2})$, where $\gamma=1-{d}/{q}$. Noticing that $f-\hat{B}^{\alpha}D_{\alpha}u-Cu\in L_{\text{loc}}^{q}(B_{1})$ for $q>d$, then by \eqref{W2p v} with $q$ in place of $p$, and Morrey's inequality, we get $Dv\in C^{\gamma}(B_{1/2})$ and
$$\|Dv\|_{C^{\gamma}(B_{1/2})}\leq N\left(\|u\|_{W^{1,p}(B_{1})}+\|g\|_{L^{\infty}(B_{1})}+\|f\|_{L^{\infty}(B_{1})}\right).$$
Therefore, we conclude that $u\in W^{1,p}(B_{1})$ is a weak solution of
$$D_{\alpha}(A^{\alpha\beta}D_{\beta}u)=\Div g'\quad \mbox{in}~B_{1}$$
without lower order terms, where $g':=g-Bu+Dv$ with
$$\|g'\|_{L^{\infty}(B_{1/2})}\leq N\left(\|u\|_{W^{1,p}(B_{1})}+\|g\|_{L^{\infty}(B_{1})}+\|f\|_{L^{\infty}(B_{1})}\right).$$
Moreover, $g'$ is also of piecewise Dini mean oscillation with
\begin{align*}
\omega_{g'}(r)&\leq N(\Lambda)\big(\omega_{g}(r)+\omega_{B}(r)\sup_{B_{1/2}}u+r^{\gamma}
[u]_{\gamma;B_{1/2}}+r^{\gamma}[Dv]_{\gamma;B_{1/2}}\big)\\
&\leq N\Big(\omega_{g}(r)+\omega_{B}(r)\sup_{B_{1/2}}u+r^{\gamma}
\big(\|u\|_{W^{1,p}(B_{1})}+\|g\|_{L^{\infty}(B_{1})}
+\|f\|_{L^{\infty}(B_{1})}\big)\Big).
\end{align*}

Hence, in order to prove Theorem \ref{thm1}, it suffices to prove the following

\begin{prop}\label{main prop}
Let $\varepsilon\in(0,1)$ and $p\in(1,\infty)$. Suppose that $A$ and $g$ are of piecewise Dini mean oscillation in $\cD$, $g\in L^{\infty}(\cD)$. If $u\in W^{1,p}(\cD)$ is a weak solution to
$$D_{\alpha}(A^{\alpha\beta}\cD_{\beta}u)=\Div g\quad\mbox{in}~\cD,$$
then $u\in C^{1}(\overline{{\cD}_{j}\cap \cD_{\varepsilon}})$, $j=1,\ldots,M$, and $u$ is Lipschitz in $\cD_{\varepsilon}$.
\end{prop}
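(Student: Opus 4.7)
Fix $x_0 \in \cD_{\varepsilon}$ and, following Section~\ref{subsection domain}, pass to the coordinate system in which the interfaces near $x_0$ are the graphs $x^d = h_j(x')$ with $\nabla_{x'} h_{j_0}(x_0') = 0'$. The strategy is Campanato's method, applied not to $Du$ itself (which generally jumps across interfaces) but to the pair $(D_{x'}u,\,U)$ with $U := A^{d\beta} D_\beta u - g_d$; these are precisely the quantities that remain continuous across a horizontal interface. I aim to show that the $L^p$-mean oscillation of $(D_{x'}u, U)$ on $B_r(x_0)$, for some $p \in (0,1)$, is controlled by a Dini modulus in $r$. By a Campanato-type argument this yields continuity of $D_{x'}u$ and $U$ at $x_0$ on each side of $\partial\cD_{j_0}$, the Lipschitz bound on $u$, and, via $D_d u = (A^{dd})^{-1}\bigl(U + g_d - \sum_{\beta<d} A^{d\beta} D_\beta u\bigr)$, the piecewise $C^1$ claim.

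\textbf{Local decomposition.} For $r$ small, on $B_r(x_0)$ write $u = w + v$, where $w$ solves the ``frozen'' Dirichlet problem
\begin{equation*}
D_\alpha\bigl(\bar A^{\alpha\beta}(x^d)\, D_\beta w\bigr) = D_\alpha \bar g_\alpha(x^d) \quad\text{in } B_r(x_0),\qquad w = u \text{ on } \partial B_r(x_0),
\end{equation*}
with $\bar A(x^d)$, $\bar g(x^d)$ the piecewise-constant-in-$x^d$ approximations on the strips $\Omega_j$ from Section~\ref{subsection domain}. The remainder $v \in W_0^{1,p}(B_r(x_0))$ solves a Dirichlet problem with divergence source $(A-\bar A) Du + (\bar g - g)$, whose $L^1$-mean on $B_r$ is bounded by $(\omega_A(r)+\omega_1(r))$ times an $L^\infty$ norm of $Du$ plus $\omega_g(r)+\omega_1(r)\|g\|_\infty$, thanks to~\eqref{est A} and Lemma~\ref{volume}. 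For the ``good'' part $w$, Lemma~\ref{lemma xn} (rescaled to $B_r$) gives, for any constants $\mathbf c,\mathbf q$,
\begin{equation*}
\|D D_{x'} w\|_{L^\infty(B_{r/2}(x_0))} + \|D \bar U_w\|_{L^\infty(B_{r/2}(x_0))} \le N r^{-1-d/p}\,\bigl\|(D_{x'} w - \mathbf c,\ \bar U_w - \mathbf q)\bigr\|_{L^p(B_r(x_0))},
\end{equation*}
where $\bar U_w := \bar A^{d\beta} D_\beta w$, which already encodes the correct mean-oscillation decay for $w$ at the smaller scale $\kappa r$.

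\textbf{Perturbation via weak-type-$(1,1)$.} The genuine obstacle is $v$: since $A$ and $g$ are assumed only to have Dini \emph{$L^1$} mean oscillation, the standard $L^p$ ($p>1$) Calder\'on--Zygmund perturbation is unavailable. I plan to establish that the solution operator for $\mathcal L_0$ (coefficients piecewise constant in $x^d$) is weak type-$(1,1)$ from $L^1$ divergence data to $Du$, using Lemma~\ref{lemma weak}; the key hypothesis $\int_{\cD\setminus B_{cr}(\bar y)}|Tb| \lesssim \int |b|$ for mean-zero $b$ supported in $B_r(\bar y)$ should follow from combining the $L^q$ solvability of Lemma~\ref{solvability} with the interior Hölder regularity of $D_{x'}u$ and $\bar U$ from Lemma~\ref{lemma xn}, in the spirit of the duality argument in \cite{a,b}. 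A layer-cake / $L^{1,\infty}\!\to\!L^p$ conversion then yields, for any $p\in(0,1)$,
\begin{equation*}
\Bigl(\fint_{B_{\kappa r}(x_0)} |Dv|^p\Bigr)^{1/p} \le N\bigl(\omega_A(r)+\omega_1(r)\bigr)\Xi + N\bigl(\omega_g(r)+\omega_1(r)\|g\|_\infty\bigr),
\end{equation*}
where $\Xi$ is a data quantity which, via Lemma~\ref{lemma xn} applied to $w$, can ultimately be absorbed into the $L^p$-mean of $(D_{x'}u, \bar U)$ on $B_r$. Combining this with the decay estimate for $w$ produces a functional inequality of the form
\begin{equation*}
\phi(x_0, \kappa r) \le \tfrac12\, \phi(x_0, r) + N\,\tilde\omega(r)\cdot \mathcal E,
\end{equation*}
where $\phi(x_0, r)$ is the $L^p$-mean oscillation of $(D_{x'}u,\bar U)$ on $B_r(x_0)$, $\tilde\omega := \omega_A+\omega_g+\omega_1$ is Dini, and $\mathcal E$ combines $\|u\|_{L^p(\cD)}$, $\|g\|_\infty$, and $\|f\|_\infty$.

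\textbf{Iteration and conclusion.} Iterating this inequality on the dyadic scales $\kappa^k r_0$ and summing via Lemma~\ref{lemma omiga} gives
\begin{equation*}
\phi(x_0, r) \le N \int_0^r \frac{\tilde\omega(s)}{s}\, ds \cdot \mathcal E + N r^\gamma\,\mathcal E.
\end{equation*}
A standard Campanato-type characterization then shows that $(D_{x'}u,\bar U)$ are Dini continuous at $x_0$ within each of the two subregions of $B_{r_0/2}(x_0)$ meeting at $\partial\cD_{j_0}$, with a modulus \emph{independent} of $\dist(x_0,\partial\cD_{j_0})$; this is what makes the bounds independent of the inter-interface distance. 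Since $\bar U = U$ on $\Omega_{j_0}$ (where $\bar A = A$) and $A^{dd}$ is invertible and piecewise Dini continuous, $D_d u$ is piecewise continuous as well. Covering $\cD_{\varepsilon}$ and repeating the argument at each $x_0$ produces the Lipschitz bound on $\cD_{\varepsilon}$ and $u \in C^1(\overline{\cD_j \cap \cD_{\varepsilon}})$. The hardest single step is verifying the weak-type-$(1,1)$ hypothesis of Lemma~\ref{lemma weak} for $\mathcal L_0$, since the coefficients are only measurable in $x^d$ and one has no classical pointwise Green's-function estimates a priori; this is where Lemma~\ref{lemma xn}, which turns $L^p$ control into $L^\infty$ control of $D_{x'}u$ and $\bar U$, will play a decisive role.
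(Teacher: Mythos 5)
Your overall scheme is the one the paper actually follows: work with the pair $(D_{x'}u,U)$, split $u$ on $B_r(x_0)$ into a solution $w$ of the frozen system with coefficients $\bar A(x_0',x^d)$ plus a remainder $v$ with divergence data $(\bar A-A)Du+g-\bar g$, control $Dv$ by a weak type-$(1,1)$ estimate proved by duality (Lemmas \ref{solvability}, \ref{lemma xn}, \ref{lemma weak}), convert to $L^q$ with $q\in(0,1)$ by the layer-cake formula, and iterate in Campanato's fashion. The genuine gap is in how you dispose of the factor $\|Du\|_{L^{\infty}(B_r(x_0))}$ that necessarily multiplies $\omega_A(r)$ in the bound for $v$ (only the $L^1$-mean oscillation of $A$ is available, so $Du$ must be pulled out in $L^\infty$). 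You assert that this quantity ``can ultimately be absorbed into the $L^p$-mean of $(D_{x'}u,\bar U)$ on $B_r$'' and you state a final iterated inequality whose right-hand side involves only $\|u\|_{L^p}$, $\|g\|_{\infty}$, $\|f\|_{\infty}$; neither claim follows from your decomposition, and as written the argument is circular, since the modulus-of-continuity estimate you iterate presupposes an $L^\infty$ bound on $Du$, which is part of what is being proved. The paper resolves this with a separate absorption step (Lemma \ref{lem3.4}): summing the iteration gives $|(D_{x'}u(x_0),U(x_0))|\le Nr^{-d}\|(D_{x'}u,U)\|_{L^1(B_r)}+N\|Du\|_{L^{\infty}(B_r)}\int_0^r\tilde\omega_A(t)t^{-1}\,dt+\dots$, and the $\|Du\|_{L^\infty}$ term is then absorbed using the smallness of $\int_0^r\tilde\omega_A(t)t^{-1}\,dt$ for small $r$ together with an iteration over the expanding balls $B_{r_k}$ with weights $4^{-kd}$ — all run as an \emph{a priori} estimate under the qualitative hypothesis $u\in C^{0,1}(B_{3/4})$. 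That hypothesis must afterwards be removed, which the paper does by flattening the interface with a $C^{1,\text{Dini}}$ diffeomorphism, mollifying coefficients and data in $x'$, invoking \cite{d}, and passing to the limit; your proposal contains no substitute for either the absorption or this approximation step.

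Two further points. In the weak type-$(1,1)$ step, the mean-zero cancellation only helps if the duality pairing involves quantities that are genuinely continuous for the adjoint frozen system; since $D_dv_0$ is merely bounded (it jumps in $x^d$), verifying the hypothesis of Lemma \ref{lemma weak} for the map $F\mapsto Dv$ with a plain mean-zero decomposition would fail. One must conjugate by the matrix $E$ built from $\bar A^{d\alpha}$, i.e.\ treat $T:\hat F\mapsto Dv$ with $F=E\hat F$, so that the pairing becomes $\int(D_{x'}v_0,V_0)\cdot\hat b$ and Lemma \ref{lemma xn} applies — exactly as in Lemma \ref{weak est barv} (or via the generalized decomposition of Section \ref{sec thm3}). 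Finally, your concluding sentence ``$\bar U=U$ on $\Omega_{j_0}$ (where $\bar A=A$)'' is false: $\bar A$ is only an $L^1$-mean approximation of $A$, and $|U-\bar U|$ must be estimated by $\|Du\|_{L^\infty}|A-\bar A|+|g_d-\bar g_d|$; relatedly, the piecewise $C^1$ and Lipschitz conclusions require comparing estimates at nearby base points whose associated coordinate systems differ, which the paper handles through the bound $|n_1-n_2|\le N\omega_1(|x_0-y|)$ on the change of frames (Case 2 of its proof); ``covering and repeating'' does not by itself deliver this.
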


The rest of this subsection is devoted to proving Proposition \ref{main prop} by adapting Campanato's approach \cite{c,g}. We shall derive a priori estimate of the modulus of continuity of $(D_{x'}u,U)$ by assuming that $u\in C^{0,1}(B_{3/4})$. The general case follows from an approximation argument and will be outlined at the end of the proof. We fix $x_{0}\in B_{3/4}\cap \cD_{j_{0}}$, $0<r\leq 1/4$, and take a coordinate system associated with $x_{0}$ as in Subsection \ref{subsection domain}. Denote
\begin{align*}
\bar{\ \mathcal{L}_{x'_{0}}}u:=D_{\alpha}(\bar{A}^{\alpha\beta}(x'_{0},x^{d})D_{\beta}u).
\end{align*}
We present a series of lemmas and their proofs which will provide key estimates for the proof of Proposition \ref{main prop}. We modify the coefficients $\bar{A}^{\alpha\beta}(x'_{0},x^{d})$ to get the following elliptic operator defined by
$$
\tilde{\mathcal{L}\ }u:=D_{\alpha}(\tilde{A}^{\alpha\beta}D_{\beta}u),
$$
where $\tilde{A}^{\alpha\beta}=\eta \bar{A}^{\alpha\beta}(x'_{0},x^{d})+\nu(1-\eta)\delta_{\alpha\beta}\delta_{ij}$ with $\eta\in C_{0}^{\infty}(B_{r}(x_{0}))$ satisfying
$$0\leq\eta\leq1,\quad\eta\equiv1~\mbox{in}~B_{2r/3}(x_{0}),
\quad|\nabla\eta|\leq {6}/{r}.$$
Then the $W^{1,p}$-solvability and estimates for the operator $\tilde{\mathcal{L}\ }$ follow from Lemma \ref{solvability} with a scaling.

\begin{lemma}\label{weak est barv}
Let $p\in(1,\infty)$. Let $v\in W^{1,p}(B_{r}(x_{0}))$ be a weak solution to the problem
\begin{align*}
\begin{cases}
\tilde{\mathcal{L}\ }v=\Div(F\chi_{B_{r/2}(x_{0})})&\quad\mbox{in}~B_{r}(x_{0}),\\
v=0&\quad\mbox{on}~\partial B_{r}(x_{0}),
\end{cases}
\end{align*}
where $F\in L^{p}(B_{r/2}(x_{0}))$. Then for any $t>0$, we have
\begin{align*}
|\{x\in B_{r/2}(x_{0}): |Dv(x)|>t\}|\leq\frac{N}{t}\|F\|_{L^{1}(B_{r/2}(x_{0}))},
\end{align*}
where $N>0$ is a {\it{universal constant}}.
\end{lemma}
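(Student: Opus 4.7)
The strategy is to deduce Lemma \ref{weak est barv} from the abstract weak-$(1,1)$ criterion in Lemma \ref{lemma weak}. Set $\cD := B_{r/2}(x_0)$ and define the linear operator $T: L^p(\cD) \to L^p(\cD)$ by $TF := (Dv)|_\cD$, where $v \in W_0^{1,p}(B_r(x_0))$ is the unique solution of $\tilde{\mathcal{L}} v = \Div(F\chi_\cD)$ provided by Lemma \ref{solvability}. The $L^p$-boundedness of $T$, which is the first hypothesis of Lemma \ref{lemma weak}, follows directly from Lemma \ref{solvability} after rescaling $B_r(x_0) \to B_1$: the coefficients $\tilde{A}^{\alpha\beta}$ depend only on $x^d$ on $B_{2/3}$ and equal $\nu\delta_{\alpha\beta}\delta_{ij}$ elsewhere, so they trivially satisfy the partial BMO condition \eqref{BMO} with $\gamma_0 = 0$, yielding $\|T\|_{L^p \to L^p} \leq N$ with a constant independent of $r$.

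The crux is verifying the H\"ormander-type cancellation hypothesis of Lemma \ref{lemma weak}: for $b \in L^p(\cD)$ supported in $\cD_s(\bar y)$ with $\int_\cD b = 0$,
$$\int_{\cD \setminus B_{cs}(\bar y)} |Tb|\,dx \,\leq\, C_0 \,\|b\|_{L^1(\cD)}.$$
Let $G$ denote the Green's function of $\tilde{\mathcal{L}}$ on $B_r(x_0)$ and set $K(x,y) := D_x \nabla_y G(x,y)$. Then $w := Tb$ admits the representation $w(x) = -\int K(x,y)\cdot b(y)\,dy$, and the identity $\int b = 0$ together with a first-order Taylor expansion of $y \mapsto K(x,y)$ around $\bar y$ reduces the task to the pointwise kernel bound $|\nabla_y K(x,\bar y)| = |D_x D_y^2 G(x,\bar y)| \leq N|x-\bar y|^{-d-1}$ whenever $|x-\bar y|$ exceeds a fixed multiple of $s$; the elementary integral $\int_{|x-\bar y|>cs} s\,|x-\bar y|^{-d-1}\,dx = Nc^{-1}$ then delivers the cancellation after integrating in $y$.

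To establish the pointwise bound on $D_x D_y^2 G$ without appealing to explicit formulae, I would apply Lemma \ref{lemma xn} to the map $y \mapsto G(x,y)$, which is $\tilde{\mathcal{L}}^*$-harmonic in $y$ away from $y = x$. Since $\tilde{\mathcal{L}}^*$ has coefficients depending only on $y^d$ on $B_{2r/3}(x_0)$, on balls $B_\rho(\bar y)$ with $\rho \sim |x-\bar y|$ the lemma yields
$$\|D_{y'} D_y G(x,\cdot)\|_{L^\infty(B_{\rho/2}(\bar y))} + \|D_y \bar{U}\|_{L^\infty(B_{\rho/2}(\bar y))} \leq N\rho^{-2-d}\|G(x,\cdot)\|_{L^1(B_\rho(\bar y))}.$$
Combined with the identity \eqref{Dx U}, the ellipticity bound $\bar{A}^{dd} \geq \nu I$, the standard pointwise bound $|G(x,y)| \leq N|x-y|^{2-d}$ (for $d \geq 3$, with a logarithmic modification for $d=2$, itself obtainable by iterating Lemma \ref{lemma xn}), and the parallel regularity in the $x$-variable coming from the fact that $x \mapsto G(x,y)$ is a solution of the dual equation in $x$, one obtains the desired kernel estimate.

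The main obstacle is the construction of the Green's function $G$ for $\tilde{\mathcal{L}}$ on $B_r(x_0)$ together with pointwise bounds that are uniform in $r$ and $x_0$. This is standard given the $L^p$-solvability of Lemma \ref{solvability} and the interior regularity of Lemma \ref{lemma xn}, following the classical construction for elliptic systems, but is technically involved. A secondary minor issue is that Lemma \ref{lemma xn} strictly applies only on balls contained in $B_{2r/3}(x_0)$, where $\tilde{\mathcal{L}}$ agrees with $\mathcal{L}_0$; on balls meeting the transition shell $B_{2r/3}(x_0) \setminus B_{r/2}(x_0)$ or the boundary $\partial B_r(x_0)$, the coefficients $\tilde{A}$ are either constant ($\nu\delta$) or interpolating, and standard regularity for constant-coefficient systems closes the argument, invoking Lemma \ref{lemma weak} to conclude.
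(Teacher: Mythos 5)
There is a genuine gap at the heart of your argument: the kernel bound $|D_xD_y^2G(x,\bar y)|\leq N|x-\bar y|^{-d-1}$ is not available here, and in fact the H\"ormander condition for the \emph{raw} kernel $K(x,y)=D_x\nabla_yG(x,y)$ fails. The coefficients of $\tilde{\mathcal L}$ are only piecewise constant in $x^{d}$ (they jump across the hyperplanes $\{x^{d}=h_j(x_0')\}$), so for fixed $x$ the map $y\mapsto D_{y^{d}}G(x,y)$ is discontinuous across those interfaces; only the combinations $D_{y'}G$ and the conormal $\bar A^{d\beta}D_{y^\beta}G$ are Lipschitz. Lemma \ref{lemma xn} gives you exactly bounds on $D_yD_{y'}G$ and $D_y\bar U$ — as you note — but you cannot pass from these to $D_{y^{d}}D_{y^{d}}G$: solving $D_{y^{d}}G=(\bar A^{dd})^{-1}\big(\bar U-\sum_{\beta<d}\bar A^{d\beta}D_{y^\beta}G\big)$ and differentiating in $y^{d}$ would require differentiating the discontinuous coefficient $(\bar A^{dd})^{-1}(y^{d})$, which is not permitted and is precisely where the estimate breaks. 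Quantitatively, the jump of $D_xD_{y^{d}}G(x,\cdot)$ across an interface passing through the support ball of $b$ is of size $\sim|x-\bar y|^{-d}$, and $\int_{|x-\bar y|>cs}|x-\bar y|^{-d}\,dx$ diverges logarithmically, so your first-order Taylor expansion of $K(x,\cdot)$ around $\bar y$ cannot deliver the cancellation whenever an interface meets $B_s(\bar y)$ — which is the typical and relevant configuration in this problem. This is exactly the ``first obstruction'' the paper flags: the mean-zero piece must be tested against the continuous quantities $(D_{x'}v_0,V_0)$, not against the full gradient $Dv_0$.

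The paper's proof fixes this by changing the operator before invoking Lemma \ref{lemma weak}: it applies the Calder\'on--Zygmund machinery to $T:\hat F\mapsto Dv$ with $F=E\hat F$, where $E(x^{d})$ is the auxiliary matrix built from $\bar A^{d\alpha}$. Then in the duality identity $\int Dv_1\cdot h=\int Dv_0\cdot b=\int (D_{x'}v_0,V_0)\cdot\hat b$, the mean-zero function $\hat b$ pairs only with $(D_{x'}v_0,V_0)$, whose oscillation on $B_s(\bar y)$ is controlled by Lemma \ref{lemma xn}; a duality step and summation over dyadic annuli then yield the H\"ormander-type bound, with no Green's function at all. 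Since $E$ and $E^{-1}$ are uniformly bounded (by $\Lambda$ and ellipticity), the resulting weak-$(1,1)$ bound in terms of $\|\hat F\|_{L^1}$ is equivalent to the stated bound in terms of $\|F\|_{L^1}$. To repair your argument you would have to implement the same idea — either conjugate by $E$ as the paper does, or decompose $b$ so that the cancellation acts on $(D_{y'}G,\bar A^{d\beta}D_{y^\beta}G)$ only — and in addition you would still owe the construction and pointwise bounds of the Green's matrix, which your outline treats as routine but which the paper deliberately avoids.
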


\begin{proof}
For simplicity, we set $x_{0}=0$, $r=1$, $\bar{A}^{\alpha\beta}(x^{d}):=\bar{A}^{\alpha\beta}(0',x^{d})$, and $\bar{\mathcal{L}\ }:=\bar{\ \mathcal{L}_{0'}}$. Suppose $E=(E^{\alpha\beta}(x^{d}))$ is a $d\times d$ matrix with
\begin{align*}
E^{\alpha\beta}(x^{d})&=\delta_{\alpha\beta}~ \mbox{for}~\alpha, \beta\in\{1,\ldots,d-1\};\quad E^{\alpha d}(x^{d})=\bar{A}^{d\alpha}(x^{d})~ \mbox{for}~\alpha\in\{1,\ldots,d\};\\
E^{d\beta}(x^{d})&=0~\mbox{for}~\beta\in\{1,\ldots,d-1\}.
\end{align*}
For any $\hat F\in L^{p}(B_{1/2})$, let $F=E\hat F$ and solve for $v$. By Lemma \ref{solvability}, we can see that $T:\hat F\to Dv$ is a bounded linear operator on $L^{p}(B_{1/2})$. It suffices to show that $T$ satisfies the hypothesis of Lemma \ref{lemma weak}. We set $c=24$ and fix $\bar{y}\in B_{1/2}$, $0<r<1/4$. Let $\hat{b}\in L^{p}(B_{1})$ be supported in $B_{r}(\bar{y})\cap B_{1/2}$ with mean zero, $b=E\hat{b}$, and $v_{1}\in W^{1,p}(B_{1})$ be the unique weak solution of
\begin{align*}
\begin{cases}
\tilde{\mathcal{L}\ }v_{1}=\Div b&\quad\mbox{in}~B_{1},\\
v_{1}=0&\quad\mbox{on}~\partial B_{1}.
\end{cases}
\end{align*}
For any $R\geq cr$ such that $B_{1/2}\setminus B_{R}(\bar{y})\neq\emptyset$ and $h\in C_{0}^{\infty}((B_{2R}(\bar{y})\setminus B_{R}(\bar{y}))\cap B_{1/2})$, let $v_{0}\in W^{1,p'}(B_{1})$ be a weak solution of
\begin{align*}
\begin{cases}
\tilde{\mathcal{L}^{*}\ }v_{0}=\Div h&\quad\mbox{in}~B_{1},\\
v_{0}=0&\quad\mbox{on}~\partial B_{1},
\end{cases}
\end{align*}
where $\tilde{\mathcal{L}^{*}\ }$ is the adjoint operator of $\tilde{\mathcal{L}\ }$, ${1}/{p}+{1}/{p'}=1$. It follows from the definition of weak solutions and the assumption of $\hat b$ that
\begin{align*}
\int_{B_{1/2}}Dv_{1}\cdot h&=\int_{B_{1/2}}Dv_{0}\cdot b\\
&=\int_{B_{r}(\bar{y})\cap B_{1/2}}
\left(
          D_{x'}v_{0}, V_{0}
 \right)\cdot\hat{b}\\
&=\int_{B_{r}(\bar{y})\cap B_{1/2}}
\left(
          D_{x'}v_{0}-D_{x'}v_{0}(\bar{y}), V_{0}-V_{0}(\bar{y})
 \right)\cdot\hat{b},
\end{align*}
where
$V_{0}=\bar{A}^{d\beta}(x^{d})D_{\beta}v_{0}$. Therefore, we have
\begin{align}\label{esti Dv h}
&\left|\int_{(B_{2R}(\bar{y})\setminus B_{R}(\bar{y}))\cap B_{1/2}}Dv_{1}\cdot h\right|\nonumber\\
&\leq \|\hat{b}\|_{L^{1}(B_{r}(\bar{y})\cap B_{1/2})}\left|\left|\left(
          D_{x'}v_{0}-D_{x'}v_{0}(\bar{y}), V_{0}-V_{0}(\bar{y})
 \right)\right|\right|_{L^{\infty}(B_{r}(\bar{y})\cap B_{1/2})}.
\end{align}
Recalling that $\eta\equiv1$ in $B_{2/3}$ and $B_{R/12}(\bar{y})\subset B_{2/3}$, we conclude that $v_{0}\in W^{1,p'}(B_{1})$ satisfies
$$\bar{\mathcal{L}^{*}\ }v_{0}=0\quad\mbox{in}~B_{R/12}(\bar{y}).$$
Then, by using \eqref{DDu}, \eqref{DU} with a suitable scaling, $r\leq R/24$, and the $L^{p}$ estimate, we have
\begin{align*}
&\|D_{x'}v_{0}-D_{x'}v_{0}(\bar{y})\|_{L^{\infty}(B_{r}(\bar{y})\cap B_{1/2})}
+\|V_{0}-V_{0}(\bar{y})\|_{L^{\infty}(B_{r}(\bar{y})\cap B_{1/2})}\\
&\leq NrR^{-1-{d}/{p'}}\|Dv_{0}\|_{L^{p'}(B_{R/12}(\bar{y}))}\\
&\leq NrR^{-1-{d}/{p'}}\|h\|_{L^{p'}((B_{2R}(\bar{y})\setminus B_{R}(\bar{y}))\cap B_{1/2})}.
\end{align*}
Now, coming back to \eqref{esti Dv h} and using the duality, we have
$$\|Dv_{1}\|_{L^{p}((B_{2R}(\bar{y})\setminus B_{R}(\bar{y}))\cap B_{1/2})}\leq NrR^{-1-{d}/{p'}}\|\hat{b}\|_{L^{1}(B_{r}(\bar{y})\cap B_{1/2})}.$$
Thus, it follows from H\"{o}lder's inequality that
\begin{align}\label{dilation Dv}
\|Dv_{1}\|_{L^{1}((B_{2R}(\bar{y})\setminus B_{R}(\bar{y}))\cap B_{1/2})}\leq NrR^{-1}\|\hat{b}\|_{L^{1}(B_{r}(\bar{y})\cap B_{1/2})}.
\end{align}
Let $N_{0}$ be the smallest positive integer such that $B_{1/2}\subset B_{2^{N_{0}}cr}(\bar{y})$. By taking $R=cr, 2cr,\ldots,2^{N_{0}-1}cr$ in \eqref{dilation Dv} and summarizing, we have
\begin{align*}
\int_{B_{1/2}\setminus B_{cr}(\bar{y})}|Dv_{1}|\ dx&\leq N\sum_{k=1}^{N_{0}}2^{-k}\|\hat{b}\|_{L^{1}(B_{r}(\bar{y})\cap B_{1/2})}\leq N\int_{B_{r}(\bar{y})\cap B_{1/2}}|\hat{b}|\ dx.
\end{align*}
Therefore, $T$ satisfies the hypothesis of Lemma \ref{lemma weak}. The lemma is proved.
\end{proof}

Consider
$$\phi(x_{0},r):=\inf_{\mathbf q\in\mathbb R^{n\times d}}\left(\fint_{B_{r}(x_{0})}|(D_{x'}u,U)-\mathbf q|^{q}\ dx\right)^{1/q},$$
where $0<q<1$ is some fixed exponent. First of all, by using H\"{o}lder's inequality, we have
\begin{align}\label{est phi}
\phi(x_{0},r)\leq\left(\fint_{B_{r}(x_{0})}|(D_{x'}u,U)|^{q}\ dx\right)^{1/q}\leq Nr^{-d}\|(D_{x'}u,U)\|_{L^{1}(B_{r}(x_{0}))},
\end{align}
where $N=N(d)$.

\begin{lemma}\label{lemma itera}
For any $\gamma\in (0,1)$ and $0<\rho\leq r\leq 1/4$, we have
\begin{align}\label{est phi'}
\phi(x_{0},\rho)\leq N\Big(\frac{\rho}{r}\Big)^{\gamma}r^{-d}\|(D_{x'}u,U)\|_{L^{1}(B_{r}(x_{0}))}+N\tilde{\omega}_{A}(\rho)\|Du\|_{L^{\infty}(B_{r}(x_{0}))}+N\tilde{\omega}_{g}(\rho),
\end{align}
where $N>0$ is a \it{universal constant}, and $\tilde\omega_{\bullet}(t)$ is a Dini function derived from $\omega_{\bullet}(t)$.
\end{lemma}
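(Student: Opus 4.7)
The plan is to adapt Campanato's scheme: freeze the coefficients at the strip-level around $x_0$ and split $u = v + w$ at scale $r$. I will let $v \in W^{1,p}_0(B_r(x_0))$ solve the Dirichlet problem for $\tilde{\mathcal L}$ with right-hand side $\Div\big[((\bar A - A) Du + g - \bar g)\chi_{B_{r/2}(x_0)}\big]$, so that $w := u - v$ satisfies $\bar{\mathcal L}_{x'_0} w = \Div \bar g$ in $B_{r/2}(x_0)$. Since $\bar g$ depends only on $x^d$ on $B_r(x_0)$, subtracting a one-variable primitive $\Phi(x^d)$ with $\bar A^{dd}(x^d) D_d \Phi = \bar g_d(x^d)$ produces $\bar{\mathcal L}_{x'_0}(w-\Phi) = 0$ in $B_{r/2}(x_0)$, together with the identities $D_{x'}(w-\Phi) = D_{x'}w$ and $\bar U_{w-\Phi} = W$, where $W := \bar A^{d\beta}D_\beta w - \bar g_d$ is the natural co-normal flux which, together with $D_{x'}w$, is continuous across the strip interfaces.

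Applying Lemma \ref{lemma xn} to $w - \Phi$ after scaling $B_{r/2}$ to $B_1$ yields, for any constant $\mathbf c$, the Lipschitz bound $\|DD_{x'}w\|_{L^\infty(B_{r/4}(x_0))} + \|DW\|_{L^\infty(B_{r/4}(x_0))} \leq N r^{-1-d/q}\|D_{x'}w - \mathbf c\|_{L^q(B_{r/2}(x_0))}$, making $D_{x'}w$ and $W$ Lipschitz on $B_{r/4}(x_0)$. Taking $\mathbf q = (D_{x'}w(x_0), W(x_0))$ in the infimum defining $\phi(x_0, \rho)$ and decomposing $(D_{x'}u, U) - \mathbf q$ as $(D_{x'}v, A^{d\beta}D_\beta v) + (D_{x'}w - D_{x'}w(x_0), W - W(x_0)) + (0, (A-\bar A)^{d\beta}D_\beta w - (g_d - \bar g_d))$ reduces $\phi(x_0, \rho)$ to three $L^q$-pieces via the $q$-subadditivity valid for $q < 1$. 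The Lipschitz ($w$-oscillation) piece yields $(\rho/r)$ times a normalized $L^1$-norm of $(D_{x'}u, U)$ on $B_r$ plus a scale-$r$ error: reverse Jensen for $q<1$ bounds $(\fint_{B_{r/2}}|D_{x'}w|^q)^{1/q}$ in terms of $\fint_{B_{r/2}}|D_{x'}u|$ plus a $(\fint_{B_{r/2}}|Dv|^q)^{1/q}$ contribution, which in turn is controlled by Lemma \ref{weak est barv} together with Marcinkiewicz interpolation by $N(\tilde\omega_A(r)\|Du\|_\infty + \tilde\omega_g(r))$. The coefficient-error piece is bounded by H\"older for $q < 1$ and Lemma \ref{volume}, producing $\tilde\omega_A(\rho)\|Du\|_\infty + \tilde\omega_g(\rho)$ directly, while concavity of the Dini modulus absorbs the leftover $(\rho/r)\tilde\omega_A(r)$ from the Lipschitz piece into $\tilde\omega_A(\rho)$.

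The main obstacle I anticipate is controlling the $v$-piece $(\fint_{B_\rho}|Dv|^q)^{1/q}$ on the small ball $B_\rho(x_0)$ when $v$ is built at the larger scale $r$: the naive weak-$(1,1)$ bound yields $N\rho^{-d}\|F_0\|_{L^1(B_{r/2})} \sim (r/\rho)^d \tilde\omega_A(r)$, where $F_0 = (\bar A - A) Du + g - \bar g$, which is insufficient. I plan to resolve this by treating the easy regime $\rho \geq r/8$ separately (reverse Jensen and H\"older suffice) and, in the hard regime $\rho < r/8$, either by iterating the one-step inequality $\phi(x_0, \kappa r) \leq \tfrac12 \phi(x_0, r) + C_\kappa(\tilde\omega_A(r)\|Du\|_\infty + \tilde\omega_g(r))$ with a $\kappa = \kappa(\gamma) \in (0, 1/8)$ chosen small enough and summing the geometric series against the Dini integrability of $\omega_A, \omega_g$, or by nesting a second scale-$\rho$ decomposition inside the scale-$r$ one so that the scale-$\rho$ $v$-part absorbs the local weak-$(1,1)$ error while the scale-$r$ $w$-part still provides the decay after subtracting a suitable comparison constant. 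The restriction $\gamma < 1$, with $N$ blowing up as $\gamma \uparrow 1$, reflects precisely this two-scale loss.
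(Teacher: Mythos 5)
Your proposal follows essentially the same route as the paper's proof: freeze the coefficients along the strips, split $u$ into $v$ (handled via the weak type-$(1,1)$ estimate of Lemma \ref{weak est barv} converted into an $L^q$ bound with $q<1$ by the layer-cake formula) plus $w$ solving $\bar{\mathcal{L}}_{x_0'}w=0$ after subtracting a one-variable primitive, apply Lemma \ref{lemma xn} to get the Lipschitz bound on $(D_{x'}w,W)$, derive a one-step decay inequality for $\phi$, and iterate over scales $\kappa^{j}r$, summing the errors against the Dini condition to build $\tilde\omega_{\bullet}$ and treating intermediate $\rho$ via the doubling property \eqref{equivalence}. The one point to fix is cosmetic but worth stating precisely: keep the one-step inequality with its linear contraction $N_0\kappa\,\phi(x_0,r)$ (which your $w$-oscillation analysis actually yields) and choose $\kappa$ so that $N_0\kappa\le\kappa^{\gamma}$, since a fixed factor $\tfrac12$ with $\kappa$ small only produces the exponent $\log 2/\log(1/\kappa)$ and cannot reach every $\gamma\in(0,1)$.
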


\begin{proof}
For any $t>0$, by using Lemma \ref{weak est barv} with $F=(\bar{A}(x'_{0},x^{d})-A(x))Du+g(x)-\bar{g}(x'_{0},x^{d})$, and \eqref{est A}, we have
\begin{align}\label{weak type Dv bar}
&|\{x\in B_{r/2}(x_{0}): |Dv(x)|>t\}|\nonumber\\
&\leq\frac{N}{t}\int_{B_{r}(x_{0})}|F|\ dx\nonumber\\
&\leq\frac{N}{t}\left(\int_{B_{r}(x_{0})}|g(x)-\bar{g}(x'_{0},x^{d})|\ dx+\int_{B_{r}(x_{0})}|(A(x)-\bar{A}(x'_{0},x^{d}))Du|\ dx\right)\nonumber\\
&\leq\frac{N}{t}\Bigg(\int_{B_{r}(x_{0})}|g(x)-\hat{g}|\ dx+\int_{B_{r}(x_{0})}|\hat{g}-\bar{g}(x'_{0},x^{d})|\ dx\nonumber\\
&\quad+\Big(\int_{B_{r}(x_{0})}|A(x)-\hat{A}|\ dx+\int_{B_{r}(x_{0})}|\hat{A}-\bar{A}(x'_{0},x^{d})|\ dx\Big)\|Du\|_{L^{\infty}(B_{r}(x_{0}))}\Bigg)\nonumber\\
&\leq\frac{Nr^{d}}{t}\Big(\omega_{g}(r)+\omega_1(r)
+\Big(\omega_{A}(r)+\omega_1(r)\Big)
\|Du\|_{L^{\infty}(B_{r}(x_{0}))}\Big)\nonumber\\
&\leq\frac{Nr^{d}}{t}\Big(\bar\omega_{g}(r)
+\bar\omega_{A}(r)\|Du\|_{L^{\infty}(B_{r}(x_{0}))}\Big),
\end{align}
where $\bar\omega_{\bullet}(r)=\omega_{\bullet}(r)+\omega_1(r)$. For any given $0<q<1$, it follows from the boundedness of $\bar{A}$,
\begin{align*}
\int_{B_{r/2}(x_{0})}|Dv|^{q}\ dx=\int_{0}^{\infty}qt^{q-1}|\{x\in B_{r/2}(x_{0}): |Dv(x)|>t\}|\ dt
\end{align*}
and \eqref{weak type Dv bar} that
\begin{align*}
&\int_{B_{r/2}(x_{0})}|D_{x'}v|^{q}\ dx+\int_{B_{r/2}(x_{0})}|V|^{q}\ dx\\
&\leq N\int_{0}^{\infty}qt^{q-1}|\{x\in B_{r/2}(x_{0}): |Dv(x)|>t\}|\ dt\\
&\leq N\left(\int_{0}^{\tau}+\int_{\tau}^{\infty}\right)qt^{q-1}|\{x\in B_{r/2}(x_{0}): |Dv(x)|>t\}|\ dt\\
&\leq N\tau^{q}|B_{r}(x_{0})|+\frac{Nq}{1-q}\tau^{q-1}\Big(r^{d}\bar\omega_{g}(r)+r^{d}\bar\omega_{A}(r)\|Du\|_{L^{\infty}(B_{r}(x_{0}))}\Big),
\end{align*}
where $V=\bar{A}^{d\beta}(x'_{0},x^{d})D_{\beta}v(x)$ and $\tau=\frac{r^{d}}{|B_{r}(x_{0})|}\Big(\bar\omega_{g}(r)+\bar\omega_{A}(r)\|Du\|_{L^{\infty}(B_{r}(x_{0}))}\Big)$. Then we have
\begin{align*}
&\int_{B_{r/2}(x_{0})}|D_{x'}v|^{q}\ dx+\int_{B_{r/2}(x_{0})}|V|^{q}\ dx\\
&\leq\frac{N}{1-q}|B_{r}(x_{0})|^{1-q}\Big(r^{d}\bar\omega_{g}(r)+r^{d}\bar\omega_{A}(r)\|Du\|_{L^{\infty}(B_{r}(x_{0}))}\Big)^{q},
\end{align*}
which implies
\begin{align}\label{holder v bar}
\left(\fint_{B_{r/2}(x_{0})}|D_{x'}v|^{q}\ dx+\fint_{B_{r/2}(x_{0})}|V|^{q}\ dx\right)^{{1}/{q}}\leq N\Big(\bar\omega_{A}(r)\|Du\|_{L^{\infty}(B_{r}(x_{0}))}+\bar\omega_{g}(r)\Big).
\end{align}

Let
$$
u_{1}(x^{d})=\int_{-1}^{x^{d}}(\bar{A}^{dd}(x'_{0},s))^{-1}\bar{g}_{d}(x'_{0},s)\,ds,
\quad \bar{u}=u-u_{1},\quad w=\bar{u}-v,
$$
so that $w$ satisfies $\bar{\ \mathcal{L}_{x'_{0}}}w=0$ in $B_{r/2}(x_{0})$. Noticing that for any $\mathbf q=(\mathbf q',q_{d})\in \mathbb R^{n\times d}$, the same system is satisfied by $D_{\beta}w-q_{\beta}$ for $\beta=1,\ldots,d-1$. By Lemma \ref{lemma xn} with a suitable scaling, we have
\begin{align}\label{DW}
&\|DD_{\beta}w\|_{L^{\infty}(B_{r/4}(x_{0}))}^{q}
+\|DW\|_{L^{\infty}(B_{r/4}(x_{0}))}^{q}\nonumber\\
&\leq Nr^{-(d+q)}\int_{B_{r/2}(x_{0})}|D_{x'}w-\mathbf q'|^{q}\ dx\nonumber\\
&\leq Nr^{-(d+q)}\int_{B_{r/2}(x_{0})}|(D_{x'}w,W)-\mathbf q|^{q}\ dx,
\end{align}
where $W=\bar{A}^{d\beta}(x'_{0},x^{d})D_{\beta}w$. Thus, for any $\kappa\in\big(0,1/4\big)$, by \eqref{DW}, we have
\begin{align*}
&\|D_{x'}w-(D_{x'}w)_{B_{\kappa r}(x_{0})}\|_{L^{q}(B_{\kappa r}(x_{0}))}^{q}+\|W-(W)_{B_{\kappa r}(x_{0})}\|_{L^{q}(B_{\kappa r}(x_{0}))}^{q}\nonumber\\
&\leq N(\kappa r)^{d+q}\left(\|DD_{x'}w\|_{L^{\infty}(B_{r/4}(x_{0}))}^{q}
+\|DW\|_{L^{\infty}(B_{r/4}(x_{0}))}^{q}\right)\nonumber\\
&\leq N\kappa^{d+q}\int_{B_{r/2}(x_{0})}|(D_{x'}w,W)-\mathbf q|^{q}\ dx,
\end{align*}
which implies
\begin{align}\label{holder w bar}
&\left(\fint_{B_{\kappa r}(x_{0})}|D_{x'}w-(D_{x'}w)_{B_{\kappa r}(x_{0})}|^{q}\ dx+\fint_{B_{\kappa r}(x_{0})}|W-(W)_{B_{\kappa r}(x_{0})}|^{q}\ dx\right)^{{1}/{q}}\nonumber\\
&\leq N_{0}\kappa\left(\fint_{B_{r/2}(x_{0})}|(D_{x'}w,W)-\mathbf{q}|^{q}\ dx\right)^{{1}/{q}},
\end{align}
where $N_{0}>0$ is a {\it{universal constant}}. Recalling that $\bar{u}=w+v$, we obtain from \eqref{holder w bar} that
\begin{align}\label{iteration u bar}
&\left(\fint_{B_{\kappa r}(x_{0})}|(D_{x'}\bar{u},\bar{U})-((D_{x'}w)_{B_{\kappa r}(x_{0})},(W)_{B_{\kappa r}(x_{0})})|^{q}\ dx\right)^{{1}/{q}}\nonumber\\
&\leq\left(\fint_{B_{\kappa r}(x_{0})}|D_{x'}\bar{u}-(D_{x'}w)_{B_{\kappa r}(x_{0})}|^{q}+|\bar{U}-(W)_{B_{\kappa r}(x_{0})}|^{q}\ dx\right)^{{1}/{q}}\nonumber\\
&\leq2^{{1}/{q}-1}\left(\fint_{B_{\kappa r}(x_{0})}|D_{x'}w-(D_{x'}w)_{B_{\kappa r}(x_{0})}|^{q}+|W-(W)_{B_{\kappa r}(x_{0})}|^{q}\ dx\right)^{{1}/{q}}\nonumber\\
&\quad+N\left(\fint_{B_{\kappa r}(x_{0})}|D_{x'}v|^{q}+|V|^{q}\ dx\right)^{{1}/{q}}\nonumber\\
&\leq N_{0}\kappa\left(\fint_{B_{r/2}(x_{0})}|(D_{x'}\bar{u},\bar{U})-\mathbf{q}|^{q}\ dx\right)^{{1}/{q}}+N\kappa^{-{d}/{q}}
\left(\fint_{B_{r/2}(x_{0})}|D_{x'}v|^{q}+|V|^{q}\ dx\right)^{{1}/{q}},
\end{align}
where $\bar{U}=\bar{A}^{d\beta}(x'_{0},x^{d})D_{\beta}\bar{u}$. Recalling that $D_{x'}\bar{u}=D_{x'}u$, $U=A^{d\beta}(x)D_{\beta}u-g_{d}(x)$ and $\bar{U}=\bar{A}^{d\beta}(x'_{0},x^{d})D_{\beta}u-\bar{g}_{d}(x'_{0},x^{d})$, we have for $x\in B_{r}(x_{0})$,
\begin{align*}
|U-\bar{U}|\leq \|Du\|_{L^{\infty}(B_{r}(x_{0}))}|A(x)-\bar{A}(x'_{0},x^{d})|+|g_{d}(x)-\bar{g}_{d}(x'_{0},x^{d})|.
\end{align*}
Thus, coming back to \eqref{iteration u bar}, using \eqref{est A} and \eqref{holder v bar}, we have
\begin{align*}
&\left(\fint_{B_{\kappa r}(x_{0})}|(D_{x'}u,U)-((D_{x'}w)_{B_{\kappa r}(x_{0})},(W)_{B_{\kappa r}(x_{0})})|^{q}\ dx\right)^{1/q}\\
&\leq N_{0}\kappa\left(\fint_{B_{r}(x_{0})}|(D_{x'}u,U)-\mathbf{q}|^{q}\ dx\right)^{1/q}+N\kappa^{-d/q}
\left(\fint_{B_{r}(x_{0})}|U-\bar{U}|^{q}\ dx\right)^{1/q}\\
&\quad+N\kappa^{-d/q}
\left(\fint_{B_{r/2}(x_{0})}|D_{x'}v|^{q}+|V|^{q}\ dx\right)^{1/q}\\
&\leq N_{0}\kappa\left(\fint_{B_{r}(x_{0})}|(D_{x'}u,U)-\mathbf{q}|^{q}\ dx\right)^{1/q}+N\kappa^{-d/q}\Big(\|Du\|_{L^{\infty}(B_{r}(x_{0}))}\\
&\quad\cdot\fint_{B_{r}(x_{0})}|A(x)-\bar{A}(x'_{0},x^{d})|\ dx+\fint_{B_{r}(x_{0})}|g_{d}(x)-\bar{g}_{d}(x'_{0},x^{d})|\ dx\Big)\\
&\quad+N\kappa^{-d/q}
\left(\fint_{B_{r/2}(x_{0})}|D_{x'}v|^{q}+|V|^{q}\ dx\right)^{1/q}\\
&\leq N_{0}\kappa\left(\fint_{B_{r}(x_{0})}|(D_{x'}u,U)-\mathbf{q}|^{q}\ dx\right)^{1/q}+N\kappa^{-d/q}\Big(\|Du\|_{L^{\infty}(B_{r}(x_{0}))}\bar\omega_{A}(r)+\bar\omega_{g}(r)\Big).
\end{align*}
Since $\mathbf{q}\in\mathbb R^{n\times d}$ is arbitrary, we obtain
\begin{align*}
\phi(x_{0},\kappa r)\leq N_{0}\kappa\phi(x_{0},r)+N\kappa^{-d/q}\Big(\|Du\|_{L^{\infty}(B_{r}(x_{0}))}\bar\omega_{A}(r)+\bar\omega_{g}(r)\Big).
\end{align*}
For any given $\gamma\in(0,1)$, fix a $\kappa\in(0,1/2)$ small enough so that $N_{0}\kappa\leq\kappa^{\gamma}$. Therefore, we have
\begin{align*}
\phi(x_{0},\kappa r)\leq \kappa^{\gamma}\phi(x_{0},r)+N\Big(\|Du\|_{L^{\infty}(B_{r}(x_{0}))}\bar\omega_{A}(r)+\bar\omega_{g}(r)\Big).
\end{align*}
Using the fact that $\kappa^{\gamma}<1$, by iteration, for $j=1,2,\ldots$, we obtain
\begin{align*}
\phi(x_{0},\kappa^{j}r)
&\leq\kappa^{j\gamma}\phi(x_{0},r)\\
&\quad+N\left(\|Du\|_{L^{\infty}(B_{r}(x_{0}))}\sum_{i=1}^{j}\kappa^{(i-1)\gamma}\bar\omega_{A}(\kappa^{j-i}r)+\sum_{i=1}^{j}\kappa^{(i-1)\gamma}\bar\omega_{g}(\kappa^{j-i}r)\right).
\end{align*}
Therefore, we get
\begin{align}\label{iteration phi}
\phi(x_{0},\kappa^{j}r)\leq\kappa^{j\gamma}\phi(x_{0},r)+N\|Du\|_{L^{\infty}(B_{r}(x_{0}))}\tilde\omega_{A}(\kappa^{j}r)+N\tilde\omega_{g}(\kappa^{j}r),
\end{align}
where
\begin{align}\label{tilde phi}
\tilde\omega_{\bullet}(t)=\sum_{i=1}^{\infty}\kappa^{i\gamma}\Big(\bar\omega_{\bullet}(\kappa^{-i}t)\chi_{\kappa^{-i}t\leq1}+\bar\omega_{\bullet}(1)\chi_{\kappa^{-i}t>1}\Big).
\end{align}
Moreover, $\tilde\omega_{\bullet}(t)$ is a Dini function (see Lemma 1 in \cite{d}) and satisfies \eqref{equivalence}.

Now, for any $\rho$ satisfying $0<\rho\leq r\leq 1/4$, we take $j$ to be the integer satisfying $\kappa^{j+1}<{\rho}/{r}\leq\kappa^{j}$. Then, by \eqref{iteration phi} and \eqref{equivalence}, we have
\begin{align}\label{itera phi}
\phi(x_{0},\rho)\leq N\Big(\frac{\rho}{r}\Big)^{\gamma}\phi(x_{0},r)+N\tilde{\omega}_{A}(\rho)\|Du\|_{L^{\infty}(B_{r}(x_{0}))}+N\tilde{\omega}_{g}(\rho).
\end{align}
Hence, \eqref{est phi'} follows from \eqref{est phi} and \eqref{itera phi}.
\end{proof}

\begin{lemma}
                    \label{lem3.4}
We have
\begin{align}\label{est Du''}
\|Du\|_{L^{\infty}(B_{1/4})}\leq N\|(D_{x'}u,U)\|_{L^{1}(B_{3/4})}+N\left(\int_{0}^{1}\frac{\tilde\omega_{g}(t)}{t}\ dt+\|g\|_{L^{\infty}(\cD)}\right),
\end{align}
where $N>0$ is a \it{universal constant}.
\end{lemma}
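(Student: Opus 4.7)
The plan is to combine the Campanato-type mean-oscillation bound of Lemma~\ref{lemma itera} with a Korevaar-type shrinking-ball device in order to absorb the $\|Du\|_{L^{\infty}}$ factor that appears on the right of \eqref{est phi'}. The first step extracts a pointwise inequality from \eqref{est phi'}. Fix a Lebesgue point $x_{0}\in B_{3/4}$ of $(D_{x'}u,U)$ and $r_{0}\in(0,1/4]$, and let $\mathbf{q}^{\ast}(\rho)$ attain the infimum in $\phi(x_{0},\rho)$. The $q$-subadditivity $(a+b)^{q}\le a^{q}+b^{q}$ for $q\in(0,1)$ yields the quasi-triangle estimate $|\mathbf{q}^{\ast}(\kappa^{j+1}r_{0})-\mathbf{q}^{\ast}(\kappa^{j}r_{0})|\le N(\phi(x_{0},\kappa^{j+1}r_{0})+\kappa^{-d/q}\phi(x_{0},\kappa^{j}r_{0}))$; telescoping, substituting \eqref{est phi'} at each scale, and converting the sums $\sum_{j}\tilde\omega_{\bullet}(\kappa^{j}r_{0})$ into $\int_{0}^{r_{0}}\tilde\omega_{\bullet}(t)/t\,dt$ via Lemma~\ref{lemma omiga}, one obtains
\begin{align*}
|(D_{x'}u,U)(x_{0})|&\le N r_{0}^{-d}\|(D_{x'}u,U)\|_{L^{1}(B_{r_{0}}(x_{0}))}\\
&\quad+N\|Du\|_{L^{\infty}(B_{r_{0}}(x_{0}))}\int_{0}^{r_{0}}\frac{\tilde\omega_{A}(t)}{t}\,dt+N\int_{0}^{r_{0}}\frac{\tilde\omega_{g}(t)}{t}\,dt.
\end{align*}
Strong ellipticity makes $A^{dd}$ invertible with $\|(A^{dd})^{-1}\|\le\nu^{-1}$, so the identity $D_{d}u=(A^{dd})^{-1}(U+g_{d}-\sum_{\beta<d}A^{d\beta}D_{\beta}u)$ gives the pointwise bound $|Du|\le N|(D_{x'}u,U)|+N\|g\|_{L^{\infty}(\cD)}$, and the displayed estimate therefore remains valid with $|Du(x_{0})|$ on the left after enlarging $N$ by a universal factor.

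Next I introduce the Korevaar weighted quantity $\Phi(x):=(3/4-|x|)^{d}|Du(x)|$ on $B_{3/4}$; since $u\in C^{0,1}(B_{3/4})$, $\Phi_{0}:=\operatorname{ess\,sup}_{B_{3/4}}\Phi<\infty$. Pick a Lebesgue point $x_{\ast}$ of $Du$ with $\Phi(x_{\ast})\ge\Phi_{0}/2$ and let $R_{\ast}:=3/4-|x_{\ast}|$. For every $y\in B_{R_{\ast}/2}(x_{\ast})$ one has $3/4-|y|\ge R_{\ast}/2$, so $(R_{\ast}/2)^{d}|Du(y)|\le\Phi_{0}\le 2R_{\ast}^{d}|Du(x_{\ast})|$, i.e., $\|Du\|_{L^{\infty}(B_{R_{\ast}/2}(x_{\ast}))}\le 2^{d+1}|Du(x_{\ast})|$. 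Fix once and for all a small universal $r_{0}\in(0,1/4]$ satisfying $2^{d+1}N\int_{0}^{r_{0}}\tilde\omega_{A}(t)/t\,dt\le 1/2$, available because $\tilde\omega_{A}$ is Dini. Applying the pointwise bound at $x_{\ast}$ with $r:=\min\{R_{\ast}/2,r_{0}\}$ (note $r\le 1/4$ and $B_{r}(x_{\ast})\subset B_{3/4}$), the coefficient of $|Du(x_{\ast})|$ on the right is at most $1/2$; absorbing that term and multiplying by $R_{\ast}^{d}$ yields
\begin{align*}
\Phi(x_{\ast})\le 2NR_{\ast}^{d}r^{-d}\|(D_{x'}u,U)\|_{L^{1}(B_{3/4})}+2NR_{\ast}^{d}\Big(\int_{0}^{1}\frac{\tilde\omega_{g}(t)}{t}\,dt+\|g\|_{L^{\infty}(\cD)}\Big).
\end{align*}

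The decisive observation is that $R_{\ast}^{d}r^{-d}=2^{d}$ when $R_{\ast}/2\le r_{0}$, while $R_{\ast}^{d}r^{-d}\le(3/4)^{d}r_{0}^{-d}$ when $R_{\ast}/2>r_{0}$, so the prefactor is universally bounded in both regimes. Consequently $\Phi_{0}\le 2\Phi(x_{\ast})\le N'\big(\|(D_{x'}u,U)\|_{L^{1}(B_{3/4})}+\int_{0}^{1}\tilde\omega_{g}(t)/t\,dt+\|g\|_{L^{\infty}(\cD)}\big)$ for a universal $N'$, and since $3/4-|x|\ge 1/2$ on $B_{1/4}$ we conclude $\|Du\|_{L^{\infty}(B_{1/4})}\le 2^{d}\Phi_{0}$, which is exactly \eqref{est Du''}. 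The main technical obstacle is the circular appearance of $\|Du\|_{L^{\infty}}$ in the first step, whose coefficient $\int_{0}^{r}\tilde\omega_{A}(t)/t\,dt$ is small only for small $r$, so a naive iteration across balls of bounded radius cannot close the loop; the weight $(3/4-|x|)^{d}$ is chosen precisely so that its $R_{\ast}^{d}$ scaling exactly cancels the $r^{-d}$ prefactor of the $L^{1}$ term, allowing a single absorption step to succeed uniformly in both the small-$R_{\ast}$ and $O(1)$-$R_{\ast}$ regimes.
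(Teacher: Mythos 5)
Your proposal is correct, and its first half coincides with the paper's: you derive exactly the pointwise inequality that the paper obtains as \eqref{est Du q} combined with the bound on $|\mathbf q_{x_{0},r}|$ (telescoping the minimizers of $\phi$, summing the Dini moduli via Lemma \ref{lemma omiga}, and identifying the limit at Lebesgue points), and you then convert $(D_{x'}u,U)$ into $Du$ through the invertibility of $A^{dd}$, just as the paper does via $A^{dd}\geq\nu$ (strictly speaking this conversion adds an $N\|g\|_{L^{\infty}(\cD)}$ term to the right-hand side rather than merely enlarging $N$, but you do carry that term in your final display, so this is only a wording slip). Where you genuinely diverge is the absorption step. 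The paper fixes $r_{0}$ with $N\int_{0}^{r_{0}}\tilde\omega_{A}(t)/t\,dt\leq 4^{-d}$ and then iterates over the expanding balls $B_{r_{k}}$, $r_{k}=3/4-(1/2)^{k}$, multiplying by $4^{-kd}$ and summing a telescoping series whose convergence is guaranteed by the a priori assumption $u\in C^{0,1}(B_{3/4})$. You instead absorb in a single step using the weighted supremum $\Phi(x)=(3/4-|x|)^{d}|Du(x)|$ (a Korevaar/Safonov-type device): near a point where $\Phi$ nearly attains its essential supremum, $\|Du\|_{L^{\infty}(B_{R_{\ast}/2}(x_{\ast}))}\leq 2^{d+1}|Du(x_{\ast})|$, the Dini smallness of $\int_{0}^{r}\tilde\omega_{A}(t)/t\,dt$ for $r\leq r_{0}$ lets you absorb, and the weight's $R_{\ast}^{d}$ scaling cancels the $r^{-d}$ prefactor uniformly in the two regimes $R_{\ast}/2\leq r_{0}$ and $R_{\ast}/2>r_{0}$. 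The two mechanisms use the same inputs (the qualitative Lipschitz assumption to ensure finiteness of, respectively, the geometric sums and $\Phi_{0}$, and a universal radius $r_{0}$ depending on $\tilde\omega_{A}$, so the constants have the same dependence); your version avoids bookkeeping the sequence of balls and the convergence of the infinite sums, at the cost of introducing the weighted maximizer, and both yield \eqref{est Du''} as stated. Two harmless points you should make explicit if you write this up: the point $x_{\ast}$ must be chosen in the full-measure set where the pointwise bound holds (in particular off the interfaces and among Lebesgue points), and, as in the paper, the coordinate dependence of $(D_{x'}u,U)$ at different base points is absorbed using $|(D_{x'}u,U)|\leq N(|Du|+\|g\|_{L^{\infty}(\cD)})$.
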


\begin{proof}
Take $\mathbf q_{x_{0},r}\in\mathbb R^{n\times d}$ to be such that
$$\phi(x_{0},r)=\left(\fint_{B_{r}(x_{0})}|(D_{x'}u,U)-\mathbf q_{x_{0},r}|^{q}\ dx\right)^{1/q}.$$
Similarly, we find $\mathbf q_{x_{0},\kappa r}\in\mathbb R^{n\times d}$, et cetera. Notice that
$$|\mathbf q_{x_{0},\kappa r}-\mathbf q_{x_{0},r}|^{q}\leq|(D_{x'}u,U)-\mathbf q_{x_{0},r}|^{q}+|(D_{x'}u,U)-\mathbf q_{x_{0},\kappa r}|^{q}.$$
By taking average over $x\in B_{\kappa r}(x_{0})$ and taking the $q$-th root, we obtain
$$|\mathbf q_{x_{0},\kappa r}-\mathbf q_{x_{0},r}|\leq N(\phi(x_{0},\kappa r)+\phi(x_{0},r)).$$
By iteration, we have
\begin{align}\label{mathbf q}
|\mathbf q_{x_{0},\kappa^{K}r}-\mathbf q_{x_{0},r}|\leq N\sum_{j=0}^{K}\phi(x_{0},\kappa^{j}r).
\end{align}
Clearly, \eqref{iteration phi} implies that
$$\lim_{K\rightarrow\infty}\phi(x_{0},\kappa^{K}r)=0.$$
Thus, by using the assumption that $u\in C^{0,1}(B_{3/4})$, we obtain for {\em a.e.} $x_{0}\in B_{3/4}$,
$$\lim_{K\rightarrow\infty}\mathbf q_{x_{0},\kappa^{K}r}=(D_{x'}u(x_{0}),U(x_{0})).$$
On the other hand, recalling that $\tilde\omega_{A}$ and $\tilde\omega_{g}$ satisfy \eqref{equivalence}. Therefore, by taking $K\rightarrow\infty$ in \eqref{mathbf q}, using \eqref{iteration phi} and Lemma \ref{lemma omiga}, for {\em a.e.} $x_{0}\in B_{3/4}$, we have
\begin{align}\label{est Du q}
&|(D_{x'}u(x_{0}),U(x_{0}))-\mathbf q_{x_{0},r}|\nonumber\\
&\leq N\sum_{j=0}^{\infty}\phi(x_{0},\kappa^{j}r)\nonumber\\
&\leq N\left(\phi(x_{0},r)+\|Du\|_{L^{\infty}(B_{r}(x_{0}))}\int_{0}^{r}\frac{\tilde\omega_{A}(t)}{t}\ dt+\int_{0}^{r}\frac{\tilde\omega_{g}(t)}{t}\ dt\right).
\end{align}
By averaging the inequality
$$|\mathbf q_{x_{0},r}|^{q}\leq|(D_{x'}u,U)-\mathbf q_{x_{0},r}|^{q}+|(D_{x'}u,U)|^{q}$$
over $x\in B_{r}(x_{0})$ and taking the $q$-th root, we have
$$|\mathbf q_{x_{0},r}|\leq2^{1/q-1}\phi(x_{0},r)+2^{1/q-1}\left(\fint_{B_{r}(x_{0})}|(D_{x'}u,U)|^{q}\ dx\right)^{1/q}.$$
Therefore, combining \eqref{est Du q} and \eqref{est phi}, we obtain for {\em a.e.} $x_{0}\in B_{3/4}$,
\begin{align*}
|(D_{x'}u(x_{0}),U(x_{0}))|&\leq Nr^{-d}\|(D_{x'}u,U)\|_{L^{1}(B_{r}(x_{0}))}\\
&\quad+N\left(\|Du\|_{L^{\infty}(B_{r}(x_{0}))}\int_{0}^{r}\frac{\tilde\omega_{A}(t)}{t}\ dt+\int_{0}^{r}\frac{\tilde\omega_{g}(t)}{t}\ dt\right).
\end{align*}
For any $x_{1}\in B_{1/4}$ and $0<r<1/4$, we take the supremum over $B_{r}(x_{1})$ and use $A^{dd}\geq\nu$ to obtain
\begin{align*}
\|Du\|_{L^{\infty}(B_{r}(x_{1}))}&\leq Nr^{-d}\|(D_{x'}u,U)\|_{L^{1}(B_{2r}(x_{1}))}+N\Big(\int_{0}^{r}\frac{\tilde\omega_{g}(t)}{t}\ dt\\
&\quad+\|Du\|_{L^{\infty}(B_{2r}(x_{1}))}\int_{0}^{r}\frac{\tilde\omega_{A}(t)}{t}\ dt+\|g\|_{L^{\infty}(\cD)}\Big).
\end{align*}
We fix $r_{0}<1/4$ such that for any $0<r\leq r_{0}$, we have
$$N\int_{0}^{r}\frac{\tilde\omega_{A}(t)}{t}\ dt\leq 4^{-d}.$$
Then, for any $x_{1}\in B_{1/4}$ and $0<r\leq r_{0}$, we have
\begin{align*}
\|Du\|_{L^{\infty}(B_{r}(x_{1}))}&\leq4^{-d}\|Du\|_{L^{\infty}(B_{2r}(x_{1}))}
+Nr^{-d}\|(D_{x'}u,U)\|_{L^{1}(B_{2r}(x_{1}))}\\
&\quad+N\left(\int_{0}^{r}\frac{\tilde\omega_{g}(t)}{t}\ dt+\|g\|_{L^{\infty}(\cD)}\right).
\end{align*}
For $k=1,2,\ldots$, denote $r_{k}={3}/{4}-(1/2)^{k}$. For $x_{1}\in B_{r_{k}}$ and $r=(1/2)^{k+2}$, we have $B_{2r}(x_{1})\subset B_{r_{k+1}}$. We take $k_{0}\geq1$ large enough such that $(1/2)^{k_{0}+2}\leq r_{0}$. It follows that for any $k\geq k_{0}$,
\begin{align*}
\|Du\|_{L^{\infty}(B_{r_{k}})}&\leq4^{-d}\|Du\|_{L^{\infty}(B_{r_{k+1}})}
+N2^{kd}\|(D_{x'}u,U)\|_{L^{1}(B_{3/4})}\\
&\quad+N\left(\int_{0}^{1}\frac{\tilde\omega_{g}(t)}{t}\ dt+\|g\|_{L^{\infty}(\cD)}\right).
\end{align*}
By multiplying the above by $4^{-kd}$ and summing over $k=k_{0},k_{0}+1,\ldots$, we have
\begin{align*}
&\sum_{k=k_{0}}^{\infty}4^{-kd}\|Du\|_{L^{\infty}(B_{r_{k}})}\\
&\leq\sum_{k=k_{0}}^{\infty}4^{-(k+1)d}\|Du\|_{L^{\infty}(B_{r_{k+1}})}
+N\|(D_{x'}u,U)\|_{L^{1}(B_{3/4})}+N\left(\int_{0}^{1}
\frac{\tilde\omega_{g}(t)}{t}\ dt+\|g\|_{L^{\infty}(\cD)}\right).
\end{align*}
It follows from $u\in C^{0,1}(B_{3/4})$ that the summations on both sides are convergent. We thus obtain
\begin{align*}
\|Du\|_{L^{\infty}(B_{1/4})}\leq N\|(D_{x'}u,U)\|_{L^{1}(B_{3/4})}+N\left(\int_{0}^{1}\frac{\tilde\omega_{g}(t)}{t}\ dt+\|g\|_{L^{\infty}(\cD)}\right).
\end{align*}
The lemma is proved.
\end{proof}

Finally, we are ready to prove Proposition \ref{main prop}.

\begin{proof}[\bf Proof of Proposition \ref{main prop}.]
By \eqref{est Du q}, we have for $0<r<1/8$ that
\begin{align*}
&\sup_{x_{0}\in B_{1/8}}|(D_{x'}u(x_{0}),U(x_{0}))-\mathbf q_{x_{0},r}|\\
&\leq N\sup_{x_{0}\in B_{1/8}}\phi(x_{0},r)+N\|Du\|_{L^{\infty}(B_{1/4})}\int_{0}^{r}\frac{\tilde\omega_{A}(t)}{t}\ dt+N\int_{0}^{r}\frac{\tilde\omega_{g}(t)}{t}\ dt\\
&=:N\psi(r).
\end{align*}
We recall that for each $x_0$, the coordinate system and thus $x'$ are chosen according to $x_0$. By Lemma \ref{lemma itera}, for any $0<r<1/8$, we obtain
\begin{align}\label{sup phi}
\sup_{x_{0}\in B_{1/8}}\phi(x_{0},r)\leq N\left(r^{\gamma}\|(D_{x'}u,U)\|_{L^{1}(B_{1/4})}+\tilde{\omega}_{A}(r)\|Du\|_{L^{\infty}(B_{1/4})}+\tilde{\omega}_{g}(r)\right).
\end{align}
Suppose that $y\in B_{1/8}\cap \cD_{j_{1}}$ for some $j_{1}\in[1,l+1]$. If $|x_{0}-y|\geq 1/32$, combining $$|(D_{x'}u(x_{0}),U(x))-(D_{x'}u(y),U(y))|\leq2\big(\|Du\|_{L^{\infty}(B_{1/4})}+\|g\|_{L^{\infty}(\cD)}\big)$$
and \eqref{est Du''}, we have
\begin{align}\label{C1 est2}
&|(D_{x'}u(x_{0}),U(x_{0}))-(D_{x'}u(y),U(y))|\nonumber\\
&\leq
N|x_{0}-y|^{\gamma}\left( \|(D_{x'}u,U)\|_{L^{1}(B_{3/4})}+\int_{0}^{1}\frac{\tilde\omega_{g}(t)}{t}\ dt+\|g\|_{L^{\infty}(\cD)}\right),
\end{align}
where $\gamma\in(0,1)$ is a constant. On the other hand, if $|x_{0}-y|<1/32$, we set $r=|x_{0}-y|$ and discuss it further according to the following dichotomy.

{\bf Case 1.} If
$$
r\leq 1/16\max\{\mbox{dist}(x_{0},\partial \cD_{j_{0}}),\mbox{dist}(y,\partial \cD_{j_{1}})\},
$$
then $j_{0}=j_{1}$. By using the triangle inequality, we have
\begin{align}\label{case1}
&|(D_{x'}u(x_{0}),U(x_{0}))-(D_{x'}u(y),U(y))|^{q}\nonumber\\
&\leq|(D_{x'}u(x_{0}),U(x_{0}))-\mathbf q_{x_{0},r}|^{q}+|\mathbf q_{x_{0},r}-\mathbf q_{y,r}|^{q}+|(D_{x'}u(y),U(y))-\mathbf q_{y,r}|^{q}\nonumber\\
&\leq N\psi^{q}(r)+|(D_{x'}u(z),U(z))-\mathbf q_{x_{0},r}|^{q}+|(D_{x'}u(z),U(z))-\mathbf q_{y,r}|^{q}\nonumber\\
&\quad+|(D_{x'}u(y),U(y))-\mathbf q_{y,r}|^{q},\quad\forall~z\in B_{r}(x_{0})\cap B_{r}(y).
\end{align}
In order to estimate the last two terms, we define
$$\varphi(y,r):=\inf_{\mathbf q\in\mathbb R^{n\times d}}\left(\fint_{B_{r}(y)}|Du-\mathbf q|^{q}\ dx\right)^{1/q}.$$
We use $D_{y}$ to denote the derivative in the coordinate system associated with $y$. Then for any $\mathbf q=(\mathbf q',q_{d})$, we have
\begin{align}\label{identity1}
&(D_{y'}u-\mathbf q',\hat{A}^{d\beta}D_{\beta}u-\hat{g}_{d}-q_{d})\nonumber\\
&=\Big(D_{y'}u-\mathbf q',D_{y^{d}}u-(\hat{A}^{dd})^{-1}\big(\hat{g}_{d}
+q_{d}-\sum_{\beta=1}^{d-1}\hat{A}^{d\beta}q_{\beta}\big)\Big)E,
\end{align}
where $\hat{A}^{d\beta}$ and $\hat{g}_{d}$ are constants corresponding to $A^{d\beta}$ and $g_{d}$, respectively, and $E=(E^{\alpha\beta})$ is defined by
\begin{equation*}
\begin{split}
&E^{\alpha\beta}=\delta_{\alpha\beta}~ \mbox{for}~\alpha, \beta\in\{1,\ldots,d-1\};\quad E^{d\beta}=0~ \mbox{for}~\beta\in\{1,\ldots,d-1\};\\
&E^{\alpha d}=\hat{A}^{d\alpha}~\mbox{for}~ \alpha\in\{1,\ldots,d\}.
\end{split}
\end{equation*}
From \eqref{identity1}, we get
\begin{align}\label{varphy1}
&\varphi(y,r)\leq \left(\fint_{B_{r}(y)}\Big|\big(D_{y'}u-\mathbf q',D_{y^{d}}u-(\hat{A}^{dd})^{-1}\big(\hat{g}_{d}+q_{d}
-\sum_{\beta=1}^{d-1}\hat{A}^{d\beta}q_{\beta}\big)\big)\Big|^{q}\ dx\right)^{1/q}\nonumber\\
&=\left(\fint_{B_{r}(y)}|(D_{y'}u-\mathbf q',\hat{A}^{d\beta}D_{\beta}u-\hat{g}_{d}-q_{d})E^{-1}|^{q}\ dx\right)^{1/q}\nonumber\\
&=\left(\fint_{B_{r}(y)}\Big|\big((D_{y'}u-\mathbf q',U-q_{d})+(0',\hat{A}^{d\beta}D_{\beta}u-A^{d\beta}D_{\beta}u
+g_{d}-\hat{g}_{d})\big)E^{-1}\Big|^{q}\ dx\right)^{1/q}\nonumber\\
&\leq2^{1/q-1}\left(\fint_{B_{r}(y)}\Big|\big((D_{y'}u,U)-\mathbf q\big)E^{-1}\Big|^{q}\right)^{1/q}+N\left(\fint_{B_{r}(y)}|(0',g_{d}
-\hat{g}_{d})E^{-1}|^{q}\ dx\right)^{1/q}\nonumber\\
&\quad+N\left(\fint_{B_{r}(y)}|(0',A^{d\beta}D_{\beta}u
-\hat{A}^{d\beta}D_{\beta}u)E^{-1}|^{q}\ dx\right)^{1/q}\nonumber\\
&\leq N\left(\fint_{B_{r}(y)}|(D_{y'}u,U)-\mathbf q|^{q}\right)^{1/q}+N\left(\omega_{A}(r)\|Du\|_{L^{\infty}(B_{1/4})}
+\omega_{g}(r)\right).
\end{align}
Since $\mathbf q$ is arbitrary, we obtain
\begin{align}\label{varphy2}
\varphi(y,r)\leq N\left(\phi(y,r)+\omega_{A}(r)\|Du\|_{L^{\infty}(B_{1/4})}+\omega_{g}(r)\right).
\end{align}
In the coordinate system associated with $x_{0}$, we first notice that
\begin{align}\label{identity2}
(D_{x'}u,D_{x^{d}}u)=(D_{y'}u,D_{y^{d}}u)X,
\end{align}
where $X=(X^{\alpha\beta})$ is a $d\times d$ matrix, and
\begin{equation}\label{def X}
X^{\alpha\beta}=\frac{\partial y^{\alpha}}{\partial x^{\beta}} \,\,~\mbox{for} ~\alpha,\beta=1,\dots,d.
\end{equation}
By using \eqref{identity2}, we obtain
\begin{align*}
\varphi(y,r)
&\leq \left(\fint_{B_{r}(y)}\Big|Du-\mathbf q X\Big|^{q}\ dx\right)^{1/q}\\
&=\left(\fint_{B_{r}(y)}|\big(D_{y'}u-\mathbf q',D_{y^{d}}u-q_{d}\big)X|^{q}\ dx\right)^{1/q}.
\end{align*}
Then by using \eqref{varphy1}, \eqref{varphy2}, and the fact that $\mathbf q$ is arbitrary, we have
\begin{equation}\label{est varphi}
\varphi(y,r)\leq N\left(\phi(y,r)+\omega_{A}(r)\|Du\|_{L^{\infty}(B_{1/4})}+\omega_{g}(r)\right)
\end{equation}
in the coordinate system associated with $x_{0}$. We thus have proved that the upper bound of $\varphi(y,r)$ is independent of coordinate systems. Now, we denote
$$\phi_{x_{0}}(y,r):=\inf_{\mathbf q\in\mathbb R^{n\times d}}\left(\fint_{B_{r}(y)}|(D_{x'}u,U)-\mathbf q|^{q}\ dx\right)^{1/q}.$$
Then,
\begin{align*}
&\phi_{x_{0}}(y,r)\leq\left(\fint_{B_{r}(y)}\Big|\big(D_{x'}u-\mathbf q',U-\big(\hat{A}^{d\beta}q_{\beta}-\hat{g}_{d}\big)\big)\Big|^{q}\ dx\right)^{1/q}\\
&=\left(\fint_{B_{r}(y)}|(D_{x'}u-\mathbf q',\hat{A}^{d\beta}D_{\beta}u-\hat{A}^{d\beta}q_{\beta})
+(0',U-\hat{A}^{d\beta}D_{\beta}u+\hat{g}_{d})|^{q}\ dx\right)^{1/q}\\
&=\left(\fint_{B_{r}(y)}|(D_{x'}u-\mathbf q',D_{x^{d}}u-q_{d})E+(0',A^{d\beta}D_{\beta}u
-\hat{A}^{d\beta}D_{\beta}u+\hat{g}_{d}-g_{d})|^{q}\ dx\right)^{1/q}.
\end{align*}
By \eqref{est varphi}, we have
\begin{align*}
\phi_{x_{0}}(y,r)\leq N\left(\phi(y,r)+\omega_{A}(r)\|Du\|_{L^{\infty}(B_{1/4})}+\omega_{g}(r)\right).
\end{align*}
Therefore, by using a similar argument that led to \eqref{est Du q}, we get
\begin{align*}
&|(D_{x'}u(y),U(y))-\mathbf q_{y,r}|\\
&\leq N \left(\phi(y,r)+\|Du\|_{L^{\infty}(B_{1/4})}
\int_{0}^{r}\frac{\tilde\omega_{A}(t)}{t}\ dt+\int_{0}^{r}\frac{\tilde\omega_{g}(t)}{t}\ dt\right).
\end{align*}
Now, coming back to \eqref{case1}, taking the average over $z\in B_{r}(x_{0})\cap B_{r}(y)$, and then taking the $q$-th root, we get
\begin{align*}
&|(D_{x'}u(x_{0}),U(x_{0}))-(D_{x'}u(y),U(y))|\\
&\leq N\Big(\psi(r)+\phi(x_{0},r)+\phi(y,r)\Big)\leq N\psi(r).
\end{align*}
Therefore, it follows from \eqref{est Du''} and \eqref{sup phi} that
\begin{align}\label{C1 est}
&|(D_{x'}u(x_{0}),U(x_{0}))-(D_{x'}u(y),U(y))|\nonumber\\
&\leq N|x_{0}-y|^{\gamma}\|(D_{x'}u,U)\|_{L^{1}(B_{3/4})}+N\int_{0}^{|x_{0}-y|}\frac{\tilde{\omega}_{g}(t)}{t}\ dt\nonumber\\
&\quad+N\int_{0}^{|x_{0}-y|}\frac{\tilde{\omega}_{A}(t)}{t}\ dt
\left(\|(D_{x'}u,U)\|_{L^{1}(B_{3/4})}+\int_{0}^{1}\frac{\tilde{\omega}_{g}(t)}{t}\ dt+\|g\|_{L^{\infty}(\cD)}\right).
\end{align}

{\bf Case 2.} If $r>1/16\max\{\mbox{dist}(x_{0},\partial \cD_{j_{0}}),\mbox{dist}(y,\partial \cD_{j_{1}})\}$, then
\begin{align}\label{case2}
&|(D_{x'}u(x_{0}),U(x_{0}))-(D_{x'}u(y),U(y))|^{q}\nonumber\\
&\leq|(D_{x'}u(x_{0}),U(x_{0}))-\mathbf q_{x_{0},r}|^{q}+|\mathbf q_{x_{0},r}-\mathbf q_{y,r}|^{q}+|(D_{y'}u(y),U(y))-\mathbf q_{y,r}|^{q}\nonumber\\
&\quad+|(D_{y'}u(y),U(y))-(D_{x'}u(y),U(y))|^{q}\nonumber\\
&\leq N\psi^{q}(r)+|(D_{x'}u(z),U(z))-\mathbf q_{x_{0},r}|^{q}+|(D_{y'}u(z),U(z))-\mathbf q_{y,r}|^{q}\nonumber\\
&\quad+|(D_{y'}u(z),U(z))-(D_{x'}u(z),U(z))|^{q}\nonumber\\
&\quad+|(D_{y'}u(y),U(y))-(D_{x'}u(y),U(y))|^{q},\quad\forall~z\in B_{r}(x_{0})\cap B_{r}(y).
\end{align}
For the last term, on one hand, $$D_{x'}u(y)-D_{y'}u(y)=(D_{x'}u(y),D_{x^{d}}u(y))(I-X^{-1})I_{0},$$ where $I_{0}=(I^{\alpha\beta})$ is a $d\times (d-1)$ matrix with
\begin{equation*}
I^{\alpha\beta}=\delta_{\alpha\beta}~\mbox{for}~\alpha,\beta\in\{1,\dots,d-1\};\quad I^{d\beta}=0~\mbox{for}~\beta\in\{1,\dots,d-1\},
\end{equation*}
$X$ is defined by \eqref{def X}, and $I$ is a $d\times d$ identity matrix. On the other hand, we suppose that the closest point on $\partial D_{j_{1}}$ to $y$ is $(y',h_{j_{1}}(y'))$, and let $$n_{2}=\frac{\big(-\nabla_{x'}h_{j_{1}}(y'),1\big)^{\top}}{\sqrt{1+|\nabla_{x'}h_{j_{1}}(y')|^{2}}}$$
be the unit normal vector at $(y',h_{j_{1}}(y'))$ on the surface $\{(y',t): t=h_{j_{1}}(y')\}$. The corresponding tangential vectors are
\begin{align*}
\tau_{2,1}=(1,0,\ldots,0,D_{x^{1}}h_{j_{1}}(y'))^{\top},\quad\dots,\quad
\tau_{2,d-1}=(0,0,\ldots,1,D_{x^{d-1}}h_{j_{1}}(y'))^{\top}.
\end{align*}
To make them orthogonal to each other, we define the projection operator by
$$\mbox{proj}_{a}b=\frac{\langle a,b\rangle}{\langle a,a\rangle}a,$$
where $\langle a,b\rangle$ denotes the inner product of the vectors $a$ and $b$, and $\langle a,a\rangle=\|a\|^{2}$. Then the Gram-Schmidt process works as follows:
\begin{align*}
\hat{\tau}_{2,1}&=\tau_{2,1},\quad\tilde{\tau}_{2,1}=\frac{\hat{\tau}_{2,1}}{\|\hat{\tau}_{2,1}\|},\\
\hat{\tau}_{2,2}&=\tau_{2,2}-\mbox{proj}_{\hat{\tau}_{2,1}}\tau_{2,2},\quad\tilde{\tau}_{2,2}=\frac{\hat{\tau}_{2,2}}{\|\hat{\tau}_{2,2}\|},\\
&\vdots\\
\hat{\tau}_{2,d-1}&=\tau_{2,d-1}-\sum_{j=1}^{d-2}\mbox{proj}_{\hat{\tau}_{2,j}}\tau_{2,d-1},\quad\tilde{\tau}_{2,d-1}=\frac{\hat{\tau}_{2,d-1}}{\|\hat{\tau}_{2,d-1}\|}.
\end{align*}
Similarly, we use $n_{1}=\frac{\big(-\nabla_{x'}h_{j_{0}}(x'_{0}),1\big)^{\top}}{\sqrt{1+|\nabla_{x'}h_{j_{0}}(x'_{0})|^{2}}}=(0',1)^{\top}$ to denote the unit normal vector at $(x'_{0},h_{j_{0}}(x'_{0}))$, and the corresponding tangential vectors are
\begin{align*}
\tau_{1,1}=(1,0,\ldots,0,0)^{\top},\quad\ldots,\quad
\tau_{1,d-1}=(0,0,\ldots,1,0)^{\top}.
\end{align*}
It follows from the proof of Lemma \ref{volume} that the upper bound of $|\nabla_{x'}h_{j_1}(y')|$ is $N\omega_1(r)$. Then we have
\begin{align*}
|n_{1}-n_{2}|=\left|(0',1)^{\top}-\frac{\big(-\nabla_{x'}h_{j_{1}}(y'),1\big)^{\top}}{\sqrt{1+|\nabla_{x'}h_{j_{1}}(y')|^{2}}}\right|
&\leq N\omega_1(|x_{0}-y|),
\end{align*}
which is also true for $|\tau_{1,i}-\tilde{\tau}_{2,i}|, i=1,\ldots,d-1$. Thus, we obtain
\begin{align*}
|D_{x'}u(y)-D_{y'}u(y)|\leq N\|Du\|_{L^{\infty}(B_{1/4})}\omega_1(|x_{0}-y|).
\end{align*}
Similarly, we can estimate the difference of $U$ in different coordinate systems. Therefore, we obtain
\begin{align}\label{diffe coor}
|(D_{x'}u(y),U(y))-(D_{y'}u(y),U(y))|\leq N\|Du\|_{L^{\infty}(B_{1/4})}\omega_1(|x_{0}-y|).
\end{align}
We remark that the penultimate term of \eqref{case2} also satisfies \eqref{diffe coor}. Coming back to \eqref{case2}, we take the average over $z\in B_{r}(x_{0})\cap B_{r}(y)$ and take the $q$-th root to get
\begin{align*}
&|(D_{x'}u(x_{0}),U(x_{0}))-(D_{x'}u(y),U(y))|\nonumber\\
&\leq N\Big(\psi(r)+\phi(x_{0},r)+\phi(y,r)+\|Du\|_{L^{\infty}(B_{1/4})}\omega_1(|x_{0}-y|)\Big)\nonumber\\
&\leq N\Big(\psi(r)+\|Du\|_{L^{\infty}(B_{1/4})}\omega_1(|x_{0}-y|)\Big).
\end{align*}
Therefore, it follows from \eqref{est Du''} and \eqref{sup phi} that
\begin{align}\label{C1 est1}
&|(D_{x'}u(x_{0}),U(x_{0}))-(D_{x'}u(y),U(y))|\nonumber\\
&\leq N|x_{0}-y|^{\gamma}\|(D_{x'}u,U)\|_{L^{1}(B_{3/4})}+N\int_{0}^{|x_{0}-y|}\frac{\tilde{\omega}_{g}(t)}{t}\ dt\nonumber\\
&\quad+N\int_{0}^{|x_{0}-y|}\frac{\tilde{\omega}_{A}(t)}{t}\ dt\cdot\left(\|(D_{x'}u,U)\|_{L^{1}(B_{3/4})}+\int_{0}^{1}\frac{\tilde{\omega}_{g}(t)}{t}\ dt+\|g\|_{L^{\infty}(D)}\right).
\end{align}
Thus, Proposition \ref{main prop} is proved under the assumption that $u\in C^{0,1}(B_{3/4})$.

We now show that $u\in C^{0,1}(B_{3/4})$ by using the technology of locally flattening the boundaries and an approximation argument. By the interior regularity obtained in \cite{dk}, it suffices to show that for any $x_{0}\in\partial\cD_{j}, j=1,\ldots,M-1$, there is a neighborhood of $x_0$ in which $u$ is Lipschitz. Recall that $x_0$ belongs to the boundaries of at most two of the subdomains. Thus, we can find a small $r_0>0$ and a $C^{1,\text{Dini}}$ diffeomorphism to flatten the boundary $\partial\cD_{j}\cap B_{r_0}(x_{0})$:
$$y=\Phi(x)=(\Phi^{1}(x),\dots,\Phi^{d}(x)),$$
which satisfies $\Phi(x_0)=0$, $\det D\Phi=1$, and
$$
\Phi(\partial\cD_{j}\cap B_{r_0}(x_{0}))=\Phi(B_{r_0}(x_{0}))\cap \{y^d=0\}.
$$
Then $\hat{u}(y):=u(x)$ satisfies
$$D_{\alpha}(\hat{A}^{\alpha\beta}D_{\beta}\hat{u})=\Div\hat{g},$$
where $\hat{A}^{\alpha\beta}(y)=D_{k}\Phi^{\alpha}D_{l}\Phi^{\beta}A^{kl}(x)$ and $\hat g(y)=D\Phi^{\top}g(x)$. Note that the coefficients $\hat{A}^{\alpha\beta}$ and $\hat{g}$ are also of piecewise Dini mean oscillation in $\Phi(B_{r_0}(x_0))$. To show that $u$ is Lipschitz near $x_0$, we only need to show that $\hat u$ is Lipschitz near $0$. Now we take the standard mollification of the coefficients and data in the $y'$ direction with a parameter $\varepsilon>0$, apply the result in \cite{d} as well as the a priori Lipschitz estimate in Lemma \ref{lem3.4} to get a uniform Lipschitz estimate independent of $\varepsilon$, and finally take the limit as $\varepsilon \searrow 0$ by following the proof of \cite[Theorem 1]{d}.
Theorem \ref{thm1} is proved.
\end{proof}

\subsection{Proof of Corollary \ref{coro}}

Similar to the proof of Theorem \ref{thm1}, we take $x_{0}\in B_{3/4}\cap \cD_{j_{0}}$. Let $A^{(j)}\in C^{\delta}(\overline{\cD}_{j})$, $1\leq j\leq l+1$, be matrix-valued functions, $B^{(j)}, g^{(j)}$ be in $C^{\delta}(\overline{\cD}_{j})$. Define the piecewise constant (matrix-valued) functions
\begin{align*}
\bar{A}(x)=
A^{(j)}(x'_{0},h_{j}(x'_{0})),\quad x\in\Omega_{j}.
\end{align*}
From $B^{(j)}$ and $g^{(j)}$, we similarly define piecewise constant functions $\bar{B}$ and $\bar{g}$. Using Remark \ref{rmk volume}, we get the following result, which is similar to Lemma 5.2 in \cite{lv}.
\begin{lemma}
Let $$1<\delta'=\min\{\delta,\frac{\mu}{1+\mu}\}.$$
With $A, \bar{A}, B, \bar{B}, g$, and $\bar{g}$ be defined as above, there exists a positive constant $N$, depending only on $d,l,\mu,\delta,\nu,\Lambda$, $\max_{1\leq j\leq l+1}\|A\|_{C^{\delta}(\overline{D}_{j})}$, $\max_{1\leq j\leq l+1}\|B\|_{C^{\delta}(\overline{D}_{j})}$, $\max_{1\leq j\leq l+1}\|g\|_{C^{\delta}(\overline{D}_{j})}$ and $\max_{1\leq j\leq l+1}\|h_{j}\|_{C^{1,\mu}(\overline{D}_{j})}$, such that for $0<r\leq 1$,
\begin{align*}
\fint_{B_{r}(x_{0})}|A-\bar{A}|\ dx+\fint_{B_{r}(x_{0})}|B-\bar{B}|\ dx
+\fint_{B_{r}(x_{0})}|g-\bar{g}|\ dx\leq Nr^{\delta'}.
\end{align*}
\end{lemma}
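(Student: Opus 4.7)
The plan is to bound $\fint_{B_{r}(x_{0})}|A-\bar{A}|\,dx$ by decomposing the ball according to the strip partition $\{\Omega_{j}\}$ and then, on each strip, splitting further into the matching region $\Omega_{j}\cap\cD_{j}$ and the symmetric difference $\Omega_{j}\Delta\cD_{j}$. Since $A^{(j)}$, $B^{(j)}$, $g^{(j)}$ enter symmetrically (all are $C^{\delta}$ on $\overline{\cD_{j}}$), it suffices to carry out the argument for $A$; the estimates for $B$ and $g$ are obtained by replacing $A^{(j)}$ by $B^{(j)}$ and $g^{(j)}$ throughout.

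On the matching piece $\Omega_{j}\cap\cD_{j}\cap B_{r}(x_{0})$ one has $A(x)=A^{(j)}(x)$ and $\bar{A}(x)=A^{(j)}(x_{0}',h_{j}(x_{0}'))$, so the $C^{\delta}(\overline{\cD_{j}})$ hypothesis gives the pointwise bound
\[
|A(x)-\bar{A}(x)|\leq\|A^{(j)}\|_{C^{\delta}(\overline{\cD_{j}})}\,|x-(x_{0}',h_{j}(x_{0}'))|^{\delta}.
\]
The first step is therefore to check that, uniformly over the (finitely many) strips $\Omega_{j}$ that actually meet $B_{r}(x_{0})$, one has $|x-(x_{0}',h_{j}(x_{0}'))|\leq Nr$. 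This is a purely geometric observation: if $\Omega_{j}\cap B_{r}(x_{0})\neq\emptyset$ then at least one of the vertical heights $h_{j-1}(x_{0}')$, $h_{j}(x_{0}')$ bounding the strip must lie within distance $r$ of $x_{0}^{d}$, and so by the ordering \eqref{eq10.42} together with $|x'-x_{0}'|<r$ the reference point $(x_{0}',h_{j}(x_{0}'))$ is within $O(r)$ of $x$. Consequently the contribution of the matching regions to $\fint_{B_{r}(x_{0})}|A-\bar{A}|\,dx$ is at most $Nr^{\delta}$.

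The second step controls the symmetric-difference regions $(\Omega_{j}\Delta\cD_{j})\cap B_{r}(x_{0})$. On these sets $A$ and $\bar{A}$ are both uniformly bounded by $\nu^{-1}$, so $|A-\bar{A}|\leq 2\nu^{-1}$ pointwise. Remark \ref{rmk volume}, which is the $C^{1,\mu}$ specialization of Lemma \ref{volume}, supplies the measure bound $|(\cD_{j}\Delta\Omega_{j})\cap B_{r}(x_{0})|\leq Nr^{d+\mu/(1+\mu)}$; summing over the $O(1)$ relevant $j$ and dividing by $|B_{r}(x_{0})|\sim r^{d}$ gives a contribution of at most $Nr^{\mu/(1+\mu)}$.

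Adding the two pieces and taking $\delta'=\min\{\delta,\mu/(1+\mu)\}$ yields $\fint_{B_{r}(x_{0})}|A-\bar{A}|\,dx\leq Nr^{\delta'}$, and the same argument verbatim produces the analogous bounds for $B$ and $g$. The step that would take the most care is the first one: one must confirm that the single constant value $A^{(j)}(x_{0}',h_{j}(x_{0}'))$ assigned to $\bar{A}$ on each strip is close (in the $C^{\delta}$ sense) to $A^{(j)}(x)$ for every $x\in\Omega_{j}\cap\cD_{j}\cap B_{r}(x_{0})$. This is not difficult, but it is the only place where the precise choice of reference point in the definition of $\bar{A}$ matters and where the geometry of the strips enters nontrivially into the Hölder estimate.
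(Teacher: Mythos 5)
Your overall decomposition (matching regions handled by the piecewise $C^{\delta}$ regularity, symmetric differences $(\cD_{j}\Delta\Omega_{j})\cap B_{r}(x_{0})$ handled by Remark \ref{rmk volume} and the boundedness of the coefficients) is exactly the intended route -- the paper gives no proof beyond citing Remark \ref{rmk volume} and Lemma 5.2 of \cite{lv} -- and your second step is fine. The genuine gap is at the point you flagged and then waved through: it is \emph{not} true that the freezing point $(x_{0}',h_{j}(x_{0}'))$ lies within $O(r)$ of every $x\in\Omega_{j}\cap\cD_{j}\cap B_{r}(x_{0})$. There are two failure modes. First, for the strip $\Omega_{j_{0}}$ containing $x_{0}$: if $r\ll \dist(x_{0},\partial\cD_{j_{0}})$, then $B_{r}(x_{0})\subset\Omega_{j_{0}}\cap\cD_{j_{0}}$ and neither bounding height is within $r$ of $x_{0}^{d}$, so the distance from points of the ball to $(x_{0}',h_{j_{0}}(x_{0}'))$ is of order $\dist(x_{0},\partial\cD_{j_{0}})$, not $r$; your claim that ``at least one of the vertical heights must lie within distance $r$ of $x_{0}^{d}$'' fails here. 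Second, for strips above $x_{0}$ (those with $j>j_{0}$): the bounding height that is within $r$ of the ball is the \emph{lower} one $h_{j-1}(x_{0}')$, whereas the constant assigned by $\bar{A}$ is the value at the \emph{upper} one $(x_{0}',h_{j}(x_{0}'))$, which can be at distance comparable to the thickness of $\cD_{j}$, again not $O(r)$. So on the matching region the H\"older bound only yields $N\big(r+\text{distance to the freezing point}\big)^{\delta}$, not $Nr^{\delta}$.

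To be fair, this gap mirrors an imprecision in the statement itself: with $\bar{A}$ frozen at $(x_{0}',h_{j}(x_{0}'))$ exactly as defined, the inequality cannot hold for all $0<r\leq1$ (if $B_{r}(x_{0})$ lies deep inside $\cD_{j_{0}}$ and $A^{(j_{0})}$ is not constant, the left-hand side tends to $|A^{(j_{0})}(x_{0})-A^{(j_{0})}(x_{0}',h_{j_{0}}(x_{0}'))|>0$ as $r\to0$). The estimate that is actually true, and all that is needed to deduce Corollary \ref{coro} from \eqref{C1 est}, \eqref{C1 est1}, and \eqref{C1 est2}, is obtained by assigning to each strip meeting $B_{r}(x_{0})$ the value of $A^{(j)}$ at a point of $\overline{\cD}_{j}$ within distance $Nr$ of the ball: $(x_{0}',h_{j-1}(x_{0}'))$ for strips above $x_{0}$, $(x_{0}',h_{j}(x_{0}'))$ for strips below, and, when the ball does not reach the interfaces, the value at $x_{0}$ itself for the strip containing it. Equivalently, one shows $\omega_{A}(r)\leq Nr^{\delta}$ for the best piecewise-constant approximation and then adds the symmetric-difference contribution $Nr^{\mu/(1+\mu)}$ from Remark \ref{rmk volume}, exactly as in \eqref{est A}. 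If you recast your Step 1 in this form, the rest of your argument goes through verbatim; as written, Step 1 does not hold.
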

Thus, Corollary \ref{coro} directly follows from \eqref{C1 est}, \eqref{C1 est1}, and \eqref{C1 est2} by taking $\gamma\in(\delta',1)$.

\section{Proof of Corollary \ref{coro2}}\label{sec_coro2}

We shall make use of the idea in \cite{a,b}, where the $W_{\text{loc}}^{1,p}$-regularity was proved for $W^{1,1}$ weak solutions to divergence form elliptic equations with Dini continuous coefficients by using a duality argument, $L^{p}$-regularity property, and bootstrap arguments. In our case, we will use the $W^{1,p}$ estimate in Lemma \ref{lem loc lq} and the interior $W^{1,\infty}$-regularity obtained in Theorem \ref{thm1}.

\begin{proof}[\bf Proof of Corollary \ref{coro2}.]
By the Sobolev embedding theorem, we have $u\in L^{\frac{d}{d-1}}(\cD)$. Thus, we only need to prove that $Du\in L_{\text{loc}}^{p}(\cD)$ for some $p\in(1,\frac{d}{d-1})$. We fix some $1<p<\frac{d}{d-1}$ so that $2\leq d<p'<\infty$ with ${1}/{p}+{1}/{p'}=1$. We rewrite \eqref{systems} as
\begin{equation*}
\mathcal{L}'u:=D_{\alpha}(A^{\alpha\beta}D_{\beta}u)+D_{\alpha}(B^{\alpha}u)+\hat{B}^{\alpha}D_{\alpha}u+(C-\lambda_{0})u=\Div g+f-\lambda_{0}u,
\end{equation*}
where $\lambda_{0}$ is a fixed large enough number. Denote $f_{0}:=f-\lambda_{0}u\in L^{\frac{d}{d-1}}(\cD)$. Let $h\in C_{0}^{\infty}(\cD)$ be given, and $v\in H_{0}^{1}(\cD)$ be the solution of
$$\mathcal{L}'^{*}v:=D_{\beta}(A^{\alpha\beta}D_{\alpha}v)-D_{\alpha}(\hat{B}^{\alpha}v)-B^{\alpha}D_{\alpha}v+(C-\lambda_{0})v=\Div h,$$
where $\mathcal{L}'^{*}$ is the adjoint operator of $\mathcal{L}'$. Then, by Theorem \ref{thm1}, we obtain $Dv\in L^{\infty}(\cD_{\varepsilon})$. By using the definition of weak solutions, uniform ellipticity condition, H\"{o}lder's inequality, $p'>2$, and the fact that $\lambda_{0}$ is a large enough number, we get
\begin{align}\label{bound v}
\|v\|_{H^{1}(\cD)}\leq N\|h\|_{L^{2}(\cD)}\leq N\|h\|_{L^{p'}(\cD)}.
\end{align}
By Lemma \ref{lem loc lq} and \eqref{bound v}, we obtain
\begin{align}\label{W1p' v}
\|v\|_{W^{1,p'}(\cD_{\varepsilon})}\leq N\big(\|h\|_{L^{p'}(\cD)}+\|v\|_{L^{2}(\cD)}\big)\leq N\|h\|_{L^{p'}(\cD)}.
\end{align}
Since $p'>d$, it follows from Morrey's inequality that
\begin{equation}\label{v infty}
\|v\|_{L^{\infty}(\cD_{\varepsilon})}\leq N\|h\|_{L^{p'}(\cD)}.
\end{equation}
By a density argument, we have for any $\varphi\in W_{0}^{1,1}(\cD_{\varepsilon})$,
\begin{equation}\label{weak  v}
\int_{\cD}A^{\alpha\beta}D_{\alpha}vD_{\beta}\varphi+B^{\alpha}D_{\alpha}v\varphi-\hat{B}^{\alpha}v D_{\alpha}\varphi+(\lambda_{0}-C)v\varphi=\int_{\cD}h_{\alpha}D_{\alpha}\varphi.
\end{equation}
Fix $\zeta\in C_{c}^{\infty}(\cD_{\varepsilon})$ with $\zeta\equiv1$ on $\cD'\subset\subset \cD_{\varepsilon}$, and we choose $\varphi=\zeta u\in W_{0}^{1,1}(\cD_{\varepsilon})$ in \eqref{weak  v}. Then
\begin{align}\label{weak  v1}
&\int_{\cD}A^{\alpha\beta}D_{\alpha}v\left(\zeta D_{\beta}u+uD_{\beta}\zeta\right)+B^{\alpha}D_{\alpha}vu\zeta-\hat{B}^{\alpha}v\left(\zeta D_{\alpha}u+uD_{\alpha}\zeta\right)+(\lambda_{0}-C)uv\zeta\nonumber\\
&=\int_{\cD}h_{\alpha}\left(\zeta D_{\alpha}u+uD_{\alpha}\zeta\right).
\end{align}
Recalling that $u\in W^{1,1}(\cD)$ is a weak solution of \eqref{systems}, then by a density argument, for any $\psi\in W_{0}^{1,\infty}(\cD)$, we have
\begin{equation}\label{weak  u}
\int_{\cD}A^{\alpha\beta}D_{\beta}uD_{\alpha}\psi+B^{\alpha}uD_{\alpha}\psi-\hat{B}^{\alpha}D_{\alpha}u\psi+(\lambda_{0}-C)u\psi=\int_{\cD}g_{\alpha}D_{\alpha}\psi-f_{0}\psi.
\end{equation}
By taking $\psi=\zeta v$ in \eqref{weak  u}, we get
\begin{align}\label{weak  u'}
&\int_{\cD}A^{\alpha\beta}D_{\beta}u\left(\zeta D_{\alpha}v+vD_{\alpha}\zeta\right)+B^{\alpha}u\left(\zeta D_{\alpha}v+vD_{\alpha}\zeta\right)-\hat{B}^{\alpha}D_{\alpha}uv\zeta+(\lambda_{0}-C)uv\zeta\nonumber\\
&=\int_{\cD}g_{\alpha}\left(\zeta D_{\alpha}v+vD_{\alpha}\zeta\right)-f_{0}\zeta v.
\end{align}
Combining \eqref{weak  v1} and \eqref{weak  u'}, we find
\begin{align}\label{weak uv}
\int_{\cD}\zeta h_{\alpha}D_{\alpha}u
&=\int_{\cD}-A^{\alpha\beta}vD_{\beta}uD_{\alpha}\zeta+\int_{\cD}A^{\alpha\beta}uD_{\alpha}vD_{\beta}\zeta-uvB^{\alpha}D_{\alpha}\zeta-\hat{B}^{\alpha}uv D_{\alpha}\zeta\nonumber\\
&\quad-\int_{\cD}uh_{\alpha}D_{\alpha}\zeta+\int_{\cD}g_{\alpha}\left(\zeta D_{\alpha}v+vD_{\alpha}\zeta\right)-f_{0}\zeta v.
\end{align}
Thus, by H\"{o}lder's inequality, the Sobolev embedding theorem, $d<p'$, \eqref{W1p' v}, and \eqref{v infty}, we estimate each term on the right-hand side of \eqref{weak uv} by
\begin{align}
&\left|\int_{\cD}A^{\alpha\beta}vD_{\beta}uD_{\alpha}\zeta\right|\leq N\|u\|_{W^{1,1}(\cD_{\varepsilon})}\|v\|_{L^{\infty}(\cD_{\varepsilon})}\leq N\|u\|_{W^{1,1}(D_{\varepsilon})}\|h\|_{L^{p'}(\cD)},\nonumber\\
&\left|\int_{\cD}A^{\alpha\beta}uD_{\alpha}vD_{\beta}\zeta-uvB^{\alpha}D_{\alpha}\zeta-\hat{B}^{\alpha}uv D_{\alpha}\zeta\right|\nonumber\\
&\leq N\|u\|_{L^{\frac{d}{d-1}}(\cD_{\varepsilon})}\|v\|_{W^{1,d}(\cD_{\varepsilon})}
\leq N\|u\|_{W^{1,1}(\cD_{\varepsilon})}\|v\|_{W^{1,p'}(\cD_{\varepsilon})}\leq N\|u\|_{W^{1,1}(\cD_{\varepsilon})}\|h\|_{L^{p'}(\cD)},\nonumber\\
                \label{uh}
&\left|\int_{\cD}uh_{\alpha}D_{\alpha}\zeta\right|\leq N\|u\|_{L^{\frac{d}{d-1}}(\cD_{\varepsilon})}\|h\|_{L^{d}(\cD)}\leq N\|u\|_{W^{1,1}(\cD_{\varepsilon})}\|h\|_{L^{p'}(\cD)},
\end{align}
and
\begin{align*}
&\left|\int_{\cD}g_{\alpha}\left(\zeta D_{\alpha}v+vD_{\alpha}\zeta\right)-f_{0}\zeta v\right|\\
&\leq N\|g\|_{L^{\infty}(\cD)}\|v\|_{W^{1,d}(\cD_{\varepsilon})}+N\|f_{0}\|_{L^{\frac{d}{d-1}}(\cD_{\varepsilon})}\|v\|_{L^{d}(\cD_{\varepsilon})}\\
&\leq N\|g\|_{L^{\infty}(\cD)}\|h\|_{L^{p'}(\cD)}+N\left(\|f\|_{L^{\frac{d}{d-1}}(\cD)}+\|u\|_{L^{\frac{d}{d-1}}(\cD_{\varepsilon})}\right)\|h\|_{L^{p'}(\cD)}\\
&\leq N\left(\|g\|_{L^{\infty}(\cD)}+\|f\|_{L^{\infty}(\cD)}+\|u\|_{W^{1,1}(\cD_{\varepsilon})}\right)\|h\|_{L^{p'}(\cD)}.
\end{align*}
Thus, we get
\begin{align*}
\left|\int_{\cD}\zeta h_{\alpha}D_{\alpha}u\right|\leq N\left(\|g\|_{L^{\infty}(\cD)}+\|f\|_{L^{\infty}(\cD)}+\|u\|_{W^{1,1}(\cD_{\varepsilon})}\right)\|h\|_{L^{p'}(\cD)}.
\end{align*}
It follows from \eqref{uh} that
\begin{align*}
\left|\int_{\cD}h_{\alpha}D_{\alpha}(u\zeta)\right|\leq N\left(\|g\|_{L^{\infty}(\cD)}+\|f\|_{L^{\infty}(\cD)}+\|u\|_{W^{1,1}(\cD_{\varepsilon})}\right)\|h\|_{L^{p'}(\cD)}
\end{align*}
for all $h\in C_{0}^{\infty}(\cD)$. Therefore, $D(u\zeta)\in L^{p}(\cD)$. In particular, $u\in W^{1,p}(\cD')$ for some $1<p<\frac{d}{d-1}$, and
\begin{align*}
\|u\|_{W^{1,p}(\cD')}\leq N\left(\|g\|_{L^{\infty}(\cD)}+\|f\|_{L^{\infty}(\cD)}+\|u\|_{W^{1,1}(\cD_{\varepsilon})}\right).
\end{align*}
Corollary \ref{coro2} is proved.
\end{proof}

\section{Weak type-$(1,1)$ estimates}\label{sec thm3}

In this section, we will prove a global weak type-$(1,1)$ estimate with respect to $A_{1}$ Muckenhoupt weights for solutions to the divergence form systems without lower-order terms, which is the second purpose of this paper. For that, we assume that the sub-domains, $\cD_{1},\dots,\cD_{M-1}$, are away from $\partial\cD$ and impose an additional assumption on the coefficient $A$. Denote $\delta_{0}:=\min_{1\leq j\leq M-1}\{\partial \cD_{j}, \partial\cD\}$. Before continuing to state our second result, we first recall the definition of $A_{1}$ Muckenhoupt weight $w$: We say $w: \mathbb R^{d}\rightarrow[0,\infty)$ belongs to $A_{1}$ if there exists some constant $C$ such that for all balls $B$,
$$\fint_{B}w(y)\ dy\leq C\inf_{x\in B}w(x).$$
The $A_1$ constant $[w]_{A_1}$ of $w$ is defined as the infimum of all such $C$'s.
Moreover, we use the following weighted Sobolev spaces:
$$W_{w}^{1,p}(\cD)=\{u: u, Du\in L_{w}^{p}(\cD)\}.$$
We also use the following notation:
$$
w(\cD)=\int_{\cD}w(x)\ dx\quad \text{and}\quad \|f\|_{L_{w}^{p}(\cD)}:=\int_{\cD}|f|^{p}w\ dx,~ p\in[1,\infty).
$$

\begin{assumption}\label{assump omega}
(1) $A$ is of piecewise Dini mean oscillation in $\cD$, and  there exists some constant $c_{0}>0$ such that for any $r\in(0,1/2)$, $\omega_{A}(r)\leq c_{0}(\ln r)^{-2}$.

(2) For some constant $c_1,c_2>0$, $\omega_0'(R_0^-)\geq c_{1}$ and for any $R\in(0,R_0/2)$, $\omega_0(R)\leq c_{2}(\ln R)^{-2}$.
\end{assumption}

\begin{theorem}\label{thm3}
Let $\cD$ have a $C^{1,\text{Dini}}$ boundary, $p\in(1,\infty)$, $w$ be an $A_{1}$ Muckenhoupt weight, and Assumption \ref{assump omega} be satisfied. For $f\in L_{w}^{p}(\cD)$, let $u\in W_{w}^{1,p}(\cD)$ be a weak solution to
\begin{align*}
\begin{cases}
\mathcal{L}u=\Div f&\quad\mbox{in}~\cD,\\
u=0&\quad\mbox{on}~\partial \cD.
\end{cases}
\end{align*}
Then for any $t>0$, we have
$$w\Big(\{x\in \cD: |Du(x)|>t\}\Big)\leq\frac{N}{t}\|f\|_{L_{w}^{1}(\cD)},$$
where $N$ depends on $n,d,M,\omega_{A},\nu,\Lambda,\varepsilon,\delta_{0},[w]_{A_1}$, the $C^{1,\text{Dini}}$ characteristics of $\cD$ and $\cD_{j}$. Moreover, the linear operator $T: f\mapsto Du$ can be extended to a bounded operator from $L_{w}^{1}(\cD)$ to weak-$L_{w}^{1}(\cD)$.
\end{theorem}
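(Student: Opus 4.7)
\textbf{Proof proposal for Theorem~\ref{thm3}.}
My plan is to reduce the theorem to Lemma~\ref{weak general}, the weighted generalization of Lemma~\ref{lemma weak} alluded to in the discussion following Lemma~\ref{lemma weak}. That lemma converts a weighted $L^p_w$ bound for the operator $T\colon f\mapsto Du$ together with a weighted atomic cancellation estimate into a weighted weak type-$(1,1)$ bound. Thus I need to establish two ingredients. The first is a weighted solvability estimate $\|Du\|_{L^p_w(\cD)}\le N\|f\|_{L^p_w(\cD)}$ for some $p\in(1,\infty)$ and every $A_1$ weight, with $N$ depending only on the stated quantities and $[w]_{A_1}$. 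The second is the weighted cancellation estimate: for every $\bar y\in\cD$, every $0<r<\mu\,\mathrm{diam}\,\cD$ with $\mu$ small, and every $b\in L^p(\cD)$ supported in $\cD_r(\bar y)$ with $\int_{\cD}b=0$,
\begin{equation*}
\int_{\cD\setminus B_{cr}(\bar y)}|Tb|\,w\,dx\le C_0\int_{\cD_r(\bar y)}|b|\,w\,dx
\end{equation*}
for a suitable constant $c>1$.

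For the first ingredient, I would verify the partially small BMO condition~\eqref{BMO} at every center and every scale in an adapted coordinate system: away from $\partial\cD$ and interfaces this is controlled by Lemma~\ref{volume} together with Assumption~\ref{assump omega}(1); near an internal interface by the $\bar A(x^d)$ approximation from Subsection~\ref{subsection domain}; and near $\partial\cD$ by a $C^{1,\text{Dini}}$ flattening using Assumption~\ref{assump omega}(2). A weighted extension of Lemma~\ref{solvability}, obtained either by adapting the proof of \cite[Theorem~8.6]{dk2} to $A_p$ weights or by Rubio de Francia extrapolation from the unweighted $L^q$ bound for all $q\in(1,\infty)$, then yields the weighted solvability.

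For the second ingredient, I would argue by duality on dyadic annuli. Set $u=Tb$, fix $R\ge cr$, and for $h\in L^{p'}(\cD)$ supported in the annular region $\tilde E_R:=(B_{2R}(\bar y)\setminus B_R(\bar y))\cap\cD$ let $v$ solve the adjoint problem $\mathcal{L}^{*}v=\Div h$ in $\cD$ with $v|_{\partial\cD}=0$. Exploiting $\int_{\cD}b=0$,
\begin{equation*}
\int_{\tilde E_R}Du\cdot h=\int_{\cD_r(\bar y)}\bigl(Dv-(Dv)_{\cD_r(\bar y)}\bigr)\cdot b.
\end{equation*}
Since the adjoint equation is source-free on $B_{R/2}(\bar y)\cap\cD$, Theorem~\ref{thm1} (combined with the boundary version from \cite{dek} when $\bar y$ is near $\partial\cD$) bounds the oscillation of $Dv$ on $\cD_r(\bar y)$ by a factor of the form $(r/R)^{\gamma}+\int_0^r\tilde\omega_A(t)/t\,dt$ times $\fint_{B_{R/2}(\bar y)}|Dv|$, while the first ingredient gives $\|Dv\|_{L^{p'}(B_{R/2})}\lesssim\|h\|_{L^{p'}}$. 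Together these produce an annular decay estimate for $|Tb|$ on $\tilde E_R$. Inserting the $A_1$ bound $w(B_R)/|B_R|\le[w]_{A_1}\inf_{B_R}w$, summing over $R=2^kcr$, and using the pointwise inequality $Mw\le[w]_{A_1}w$ a.e.\ converts this into the weighted cancellation estimate.

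\textbf{Main obstacle.} The delicate point is the summation in the second ingredient: the $A_1$ bound allows the weight of $\tilde E_R$ to grow like $2^{kd}$ with the dyadic scale $R=2^kcr$, and this must be beaten by the combined oscillation decay of $Dv$. The polynomial factor $(r/R)^{\gamma}=2^{-k\gamma}$ handles one piece comfortably, but the Dini contribution $\int_0^{2^{-k}cr}\tilde\omega_A(t)/t\,dt$ decays only logarithmically; it is precisely the log-squared bound in Assumption~\ref{assump omega} (on both $\omega_A$ and $\omega_0$, the latter entering through $\omega_1$ in Lemma~\ref{volume}) that makes the whole series summable against the $A_1$ growth, yielding constants depending on $[w]_{A_1}$ only through a finite power. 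Once the weighted cancellation is in hand, Lemma~\ref{weak general} delivers the weighted weak type-$(1,1)$ inequality, and extending $T$ from $L^p_w(\cD)\cap L^1_w(\cD)$ to a bounded operator from $L^1_w(\cD)$ into weak-$L^1_w(\cD)$ follows by standard density.
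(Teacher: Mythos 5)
There is a genuine gap in your verification of the atomic cancellation estimate. You propose to check the hypothesis of Lemma \ref{weak general} in its standard form (atoms $b$ with $\int_\cD b=0$) and, via duality, to control $\int_{\cD_r(\bar y)}\bigl(Dv-(Dv)_{\cD_r(\bar y)}\bigr)\cdot b$ by the oscillation of the \emph{full} gradient $Dv$ of the adjoint solution on $\cD_r(\bar y)$. But Theorem \ref{thm1} does not give smallness of $\operatorname*{osc}_{\cD_r(\bar y)}Dv$: across an interfacial boundary $\partial\cD_j$ the normal component of $Dv$ has an $O(1)$ jump, so for an interior atom whose ball meets an interface (unavoidable, since the sub-domains sit inside $\cD$) the quantity $\|Dv-(Dv)_{\cD_r(\bar y)}\|_{L^\infty(\cD_r(\bar y))}$ does not tend to zero as $r\to0$ and cannot be bounded by $\bigl((r/R)^{\gamma}+\int_0^r\tilde\omega_A(t)/t\,dt\bigr)\fint|Dv|$; only $(D_{x'}v,V)$ in a coordinate system adapted to the atom enjoys such an estimate. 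This is precisely why the paper does \emph{not} use the plain mean-zero decomposition in the interior: for cubes away from $\partial\cD$ it sets $g(x)=E(x)\fint_{Q}E^{-1}f$ and $b=f-g$, so that $\tilde b=E^{-1}b$ has mean zero and the duality pairing becomes $\int (D_{x'}u_2,U_2)\cdot\tilde b$, whose oscillation is controlled; and it is exactly for this coordinate-dependent, non-mean-zero decomposition that the generalized Calder\'on--Zygmund lemma with Assumption \ref{assump9.26} (rather than the hypothesis you state) was formulated. Your plan works, as in the paper, only for atoms near $\partial\cD$, where no interfaces are present (the sub-domains staying at distance $\delta_0$ from the boundary) and the boundary estimate of \cite{dek} applies to $Du_2$ itself.

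A second, smaller but substantive issue is your summation accounting. You claim the weight of the annulus at scale $R=2^kcr$ grows like $2^{kd}$ and that this growth is ``beaten'' by the logarithmic Dini decay; arithmetically a logarithmic factor can never beat polynomial growth, so if that were the true bookkeeping the proof would fail. In the paper's argument there is no such growth to beat: the $A_1$ property ($\inf_{Q}w\gtrsim [w]_{A_1}^{-1}\fint_{B_{2R}}w$) together with H\"older and the weighted adjoint estimate \eqref{eq9.44} makes the weight factors cancel, leaving the per-annulus bound $N\|b\|_{L^1_w(Q)}\bigl(r_k^{\gamma}R^{-\gamma}+(\ln(4/r_k))^{-1}\bigr)$. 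The first term sums geometrically; the second does not decay in $k$ at all (it is $\int_0^{r_k}\tilde\omega_A(t)/t\,dt$, evaluated at the atom's scale, not at $2^{-k}cr$), and it is summable only because the domain is bounded, so there are merely $O(\ln(1/r_k))$ dyadic annuli --- and this is exactly what the log-squared hypothesis in Assumption \ref{assump omega}, through Lemma \ref{lemma A} and \eqref{eq11.08}, is designed to deliver. So you identified the right quantitative input, but the mechanism by which it enters is different from the one you describe.
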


We shall use a generalized version of Lemma \ref{lemma weak} (see the Appendix) since our argument and estimates depend on the coordinate system, as well as the following lemma.
\begin{lemma}[Lemma 3.4 of \cite{dk}]\label{lemma A}
Let $\omega$ be a nonnegative increasing function such that $\omega(t)\leq(\ln\frac{t}{4})^{-2}$ for $0<t\leq 1$, and $\tilde\omega$ be given as in \eqref{tilde phi} with $\omega$ in place of $\bar\omega$. Then for any $r\in(0,1]$, we have
$$\int_{0}^{r}\frac{\tilde\omega(t)}{t}\ dt\leq N\Big(\ln\frac{4}{r}\Big)^{-1},$$
where $N>0$ is some positive constant.
\end{lemma}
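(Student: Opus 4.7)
The plan is to plug the definition \eqref{tilde phi} into the integral, interchange sum and integral (justified by nonnegativity), and perform the change of variable $s=\kappa^{-i}t$. Setting $\omega^*(s)=\omega(s)$ for $s\le 1$ and $\omega^*(s)=\omega(1)$ for $s>1$, this turns the quantity of interest into
\[
\int_0^r\frac{\tilde\omega(t)}{t}\,dt \;=\; \sum_{i=1}^\infty \kappa^{i\gamma}\, F(\kappa^{-i}r),
\qquad F(a):=\int_0^a \frac{\omega^*(s)}{s}\,ds,
\]
and the task reduces to bounding $F$ and then estimating the series.

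The first key step is a clean computation of $F$. Using the hypothesis $\omega(s)\le (\ln(4/s))^{-2}$ on $(0,1]$ and the substitution $u=\ln(4/s)$ (so $du=-ds/s$), I expect to obtain exactly
\[
F(a)\le \frac{1}{\ln(4/a)}\quad\text{for }0<a\le 1,
\qquad
F(a)\le \frac{1}{\ln 4}+\omega(1)\ln a\quad\text{for }a>1.
\]
This is the only place the hypothesis on $\omega$ is used, and it is essentially an exact evaluation; no subtlety here.

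Next I split the series at the integer $M=\lfloor \log r/\log\kappa\rfloor$, so that $\kappa^M\ge r>\kappa^{M+1}$. Setting $\alpha=|\ln\kappa|$, $L=\ln(1/r)$, and $\beta=L-M\alpha\in[0,\alpha)$, for $i\le M$ (Case A) the term is $\kappa^{i\gamma}/(\ln 4+\beta+(M-i)\alpha)$; reindexing $j=M-i$ this is a sum of the form $\kappa^{M\gamma}\sum_{j=0}^{M-1} e^{j\alpha\gamma}/(\ln 4+\beta+j\alpha)$. The main estimate here is that the summand grows like a geometric sequence with ratio $e^{\alpha\gamma}>1$ divided by a linear denominator, so the tail dominates; comparing the sum to the integral $\int_0^M e^{x\alpha\gamma}/(1+x\alpha)\,dx$ and using the standard asymptotic $\int_0^X e^y/(a+y)\,dy\sim e^X/(a+X)$, I expect the Case A contribution to be bounded by $N/(1+M\alpha)\le N/\ln(4/r)$, using $M\alpha\le L$ and comparing $L$ with $\ln(4/r)$ (trivial for $r\le 1/2$ and absorbed into the constant for $r$ close to $1$).

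For $i\ge M+1$ (Case B) I simply bound $F(\kappa^{-i}r)\le N(1+i\alpha-L)\le N(1+i\alpha)$, so the Case B sum is dominated by $\sum_{i=M+1}^\infty \kappa^{i\gamma}(1+i\alpha)\le N(M+1)\kappa^{M\gamma}\le Nr^\gamma\ln(4/r)$, using $\kappa^M\le r/\kappa$. Since $r^\gamma(\ln(4/r))^2$ is bounded on $(0,1]$, this is also $\le N/\ln(4/r)$. Combining the two cases gives the claimed bound. The only mildly delicate step is the integral comparison in Case A justifying that the geometrically growing sum is controlled by its last term divided by $\alpha M$; everything else is bookkeeping with logarithms and powers of $\kappa$.
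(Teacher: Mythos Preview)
The paper does not give its own proof of this lemma; it is simply quoted as Lemma~3.4 of \cite{dk} and used as a black box. So there is no proof in the paper to compare your argument against.

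Your argument is correct and is essentially the natural direct proof. After the change of variable $s=\kappa^{-i}t$ and the exact evaluation $\int_0^a s^{-1}(\ln(4/s))^{-2}\,ds=(\ln(4/a))^{-1}$, the problem reduces to the two series estimates you describe, and both go through. The Case~B bound is clean as written. In Case~A your appeal to the asymptotic $\int_0^X e^y/(a+y)\,dy\sim e^X/(a+X)$ is a bit informal, since what you actually need is the one-sided inequality $\le N\,e^X/(a+X)$; this is most easily obtained by splitting the integral at $X/2$ (the part on $[0,X/2]$ contributes $\le e^{X/2}\ln(1+X/(2a))$, which after multiplying by $\kappa^{M\gamma}=e^{-M\alpha\gamma}$ is exponentially small in $M$, and the part on $[X/2,X]$ gives $\le e^X/(a+X/2)$, which yields exactly the $N/(1+M\alpha)$ you want). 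With that detail filled in, your proof is complete and self-contained, which the paper's treatment is not.
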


Now, we can prove Theorem \ref{thm3}.

\begin{proof}[\bf Proof of Theorem \ref{thm3}.]
To begin with, we note that by using \eqref{relation r R00} in the proof of Lemma \ref{volume} and Assumption \ref{assump omega} (2), we have
$$
\frac{3r}{2}= \int_{0}^{R}s\omega'(s+2r)\ ds\geq\frac{R^{2}}{2}\omega'((R+2r)^-)\geq\frac{c_{1}}{2}R^{2}.$$
Then by Assumption \ref{assump omega} (2), we obtain for any $r\in(0,r_0/2)$,
\begin{equation}\label{eq11.08}
\omega_1(r)=\omega_0(2r+R)\le 2\omega_0(R)\leq 2c_{2}(\ln R)^{-2}\leq c(\ln r)^{-2}
\end{equation}
for some constant $c>0$. We thus conclude that Lemma \ref{lemma A} is available in our case by combining Assumption \ref{assump omega} (1) and \eqref{eq11.08}.

The assumption on $\partial\cD\in C^{1,\text{Dini}}$ implies that \eqref{reifenberg} holds true. Also, the coefficients $A^{\alpha\beta}$ satisfy \eqref{BMO} in the interior of $\cD$ and near the boundary, $A^{\alpha\beta}$ satisfy \eqref{condi boundary}. By Lemma \ref{sol weight}, the $W_{w}^{1,p}$-solvability and estimates for divergence form elliptic systems with $A_{1}$ weights are available. Hence, the map $T: f\mapsto Du$ is a bounded linear map on $L_{w}^{p}(\cD)$. Let $\{Q_{\alpha}^{k}\}$ be a collection of dyadic ``cubes'' as in the proof of \cite[Lemma 4.1]{dek}. By Remark \ref{rmk general}, we can assume that each $Q_{\alpha}^{k}$ is small enough so that they do not intersect with $\cup_{j=1}^{M-1}\overline{\cD_{j}}$ and $\partial\cD$ at the same time. Moreover, for a fixed $x_{k}\in Q_{\alpha}^{k}$, we associate $Q_{\alpha}^{k}$ with a Euclidean ball $B_k=B_{r_k}(x_{k})$ such that $x_{k}\in Q_{\alpha}^{k}\subset B_{k}$, where $r_k=\text{diam}\, Q_{\alpha}^{k}\leq\frac{\delta_{0}}{2}$. Suppose for some $Q_{\alpha}^{k}$ and $t>0$,
\begin{equation}\label{prop f Ql}
t<\frac{1}{w(Q_{\alpha}^{k})}\int_{Q_{\alpha}^{k}}|f|w\ dx\leq C_{2}t.
\end{equation}
Then $f$ admits a decomposition in a given $Q_{\alpha}^{k}$ according to the following dichotomy.

(i) If $\dist(x_{k},\partial\cD)\leq\frac{\delta_{0}}{2}$, then $B_k$ does not intersect with sub-domains $\cD_{j}$, $j=1,\dots,M-1$. In this case, we choose the coordinate system according to $y_{k}\in\partial\cD$, which satisfies $|x_{k}-y_{k}|=\dist(x_{k},\partial\cD)$. Let
$$
g:=\fint_{Q_{\alpha}^{k}}f\ dx,\quad b=f-g\quad\mbox{in}~Q_{\alpha}^{k}.
$$
Then
\begin{equation*}
\fint_{Q_{\alpha}^{k}}b\ dx=0,
\end{equation*}
and
\begin{align*}
|g|\leq \fint_{Q_{\alpha}^{k}}|f|\ dx\leq \frac{1}{|Q_{\alpha}^{k}|\inf\limits_{Q_{\alpha}^{k}}w}\int_{Q_{\alpha}^{k}}|f|w\ dx\leq\frac{1}{w(Q_{\alpha}^{k})}\int_{Q_{\alpha}^{k}}|f|w\ dx\leq C_{2}t,
\end{align*}
where we used the definition of $w$ and \eqref{prop f Ql}.
Hence,
$$\int_{Q_{\alpha}^{k}}|g|^{p}w\ dx\leq C_{2}t^{p}w(Q_{\alpha}^{k}).$$
Let $u_{1}\in W_{w}^{1,p}(\cD)$ be the unique weak solution of
\begin{align*}
\begin{cases}
\mathcal{L}u_{1}=\Div b&\quad\mbox{in}~\cD,\\
u_{1}=0&\quad\mbox{on}~\partial \cD.
\end{cases}
\end{align*}
Set $c=\frac{4R_0}{\delta_{0}}$ with $R_{0}=\mbox{diam}~\cD$. Then for any $R\geq cr_{k}$ such that $\cD\setminus B_{R}(x_{k})\neq\emptyset$ and $h\in C_{0}^{\infty}(\cD_{2R}(x_{k})\setminus B_{R}(x_{k}))$, let $p'=p/(p-1)$, $\mathcal{L}^{*}$ be the adjoint operator of $\mathcal{L}$, and $u_{2}\in W_{w^{-\frac{1}{p-1}}}^{1,p'}(\cD)$ be a weak solution of
\begin{align*}
\begin{cases}
\mathcal{L}^{*}u_{2}=\Div h&\quad\mbox{in}~\cD,\\
u_{2}=0&\quad\mbox{on}~\partial \cD,
\end{cases}
\end{align*}
which satisfies
\begin{equation}
                        \label{eq9.44}
\left(\int_{\cD}|Du_{2}|^{p'}w^{-\frac{1}{p-1}}\ dx\right)^{\frac{1}{p'}}\leq N\left(\int_{\cD}|h|^{p'}w^{-\frac{1}{p-1}}\ dx\right)^{\frac{1}{p'}}=N\left(\int_{\cD_{2R}(x_{k})\setminus B_{R}(x_{k})}|h|^{p'}w^{-\frac{1}{p-1}}\ dx\right)^{\frac{1}{p'}}.
\end{equation}
See Lemma \ref{sol weight}.
Then we can use the definition of adjoint solutions, the fact that $b$ is supported in $Q_{\alpha}^{k}$ with mean zero, and $h\in C_{0}^{\infty}(\cD_{2R}(x_{k})\setminus B_{R}(x_{k}))$ to obtain
\begin{align}\label{esti Du h}
\int_{\cD_{2R}(x_{k})\setminus B_{R}(x_{k})}Du_{1}\cdot h
=\int_{Q_{\alpha}^{k}}Du_{2}\cdot b=\int_{Q_{\alpha}^{k}}\big(Du_{2}-Du_{2}(x_{k})\big)\cdot b.
\end{align}
Since $R\leq R_0$, $B_{\frac{\delta_{0}R}{2R_0}}(x_{k})$ does not intersect with sub-domains $\cD_{j}$, $j=1,\dots,M-1$. Because $\mathcal{L}^{*}u_{2}=0$ in $\cD_{R}(x_{k})$, by flattening the boundary and using a similar argument that led to an a priori estimate of the modulus of continuity of $Du_{2}$ in the proof of Theorem 1.3 in \cite{dek}, we have
\begin{align}\label{case1 est Du}
|Du_{2}(x)-Du_{2}(x_{k})|\leq N\left(\Big(\frac{|x-x_{k}|}{R}\Big)^{\gamma}
+\omega_{A}^{*}(|x-x_{k}|)\right)
R^{-d}\|Du_{2}\|_{L^{1}(\cD_{\frac{\delta_{0}R}{2R_0}}(x_{k}))}
\end{align}
for any $x\in Q_{\alpha}^{k}\subset\cD_{\frac{\delta_{0}R}{4R_0}}(x_{k})$, where $\gamma\in(0,1)$ is a constant and $\omega_{A}^{*}(t)$ is defined as in \cite[(2.34)]{dek}, which is derived from $\omega_{A}(t)$. Then, coming back to \eqref{esti Du h}, using Lemma \ref{lemma A}, \eqref{eq11.08}, the definition of $A_1$ weights, \eqref{case1 est Du}, H\"{o}lder's inequality, and \eqref{eq9.44},
we obtain
\begin{align*}
&\left|\int_{\cD_{2R}(x_{k})\setminus B_{R}(x_{k})}Du_{1}\cdot h\right|\leq\frac{1}{\inf\limits_{Q_{\alpha}^{k}}w}\|b\|_{L_{w}^{1}(Q_{\alpha}^{k})}\|Du_{2}-Du_{2}(x_{k})\|_{L^{\infty}(Q_{\alpha}^{k})}\\
&\leq \frac{NR^{-d}}{\inf\limits_{\cD_{2R}(x_{k})}w}\|b\|_{L_{w}^{1}(Q_{\alpha}^{k})}\|Du_{2}\|_{L^{1}(\cD_{\frac{\delta_{0}R}{2R_0}}(x_{k}))}\left(r_{k}^{\gamma}R^{-\gamma}+\Big(\ln\frac{4}{r_{k}}\Big)^{-1}\right)\\
&\leq\frac{N}{\int_{\cD_{2R}(x_{k})}w\ dx}\|b\|_{L_{w}^{1}(Q_{\alpha}^{k})}\left(\int_{\cD}|Du_{2}|^{p'}w^{-\frac{1}{p-1}}\ dx\right)^{\frac{1}{p'}}\big(\int_{\cD_{2R}(x_{k})}w\ dx\big)^{\frac{1}{p}}\left(r_{k}^{\gamma}R^{-\gamma}+\Big(\ln\frac{4}{r_{k}}\Big)^{-1}\right)\\
&\leq N\big(\int_{\cD_{2R}(x_{k})}w\ dx\big)^{\frac{1}{p}-1}\|b\|_{L_{w}^{1}(Q_{\alpha}^{k})}\left(\int_{\cD_{2R}(x_{k})\setminus B_{R}(x_{k})}|h|^{p'}w^{-\frac{1}{p-1}}\ dx\right)^{\frac{1}{p'}}\left(r_{k}^{\gamma}R^{-\gamma}+\Big(\ln\frac{4}{r_{k}}\Big)^{-1}\right).
\end{align*}
By the duality, we have
$$\|Du_{1}\|_{L_{w}^{p}(\cD_{2R}(x_{k})\setminus B_{R}(x_{k}))}\leq N\big(\int_{\cD_{2R}(x_{k})}w\ dx\big)^{\frac{1}{p}-1}\|b\|_{L_{w}^{1}(Q_{\alpha}^{k})}\left(r_{k}^{\gamma}R^{-\gamma}+\Big(\ln\frac{4}{r_{k}}\Big)^{-1}\right).$$
Therefore, by H\"{o}lder's inequality, we obtain
$$\|Du_{1}\|_{L_{w}^{1}(\cD_{2R}(x_{k})\setminus B_{R}(x_{k}))}\leq N\|b\|_{L_{w}^{1}(Q_{\alpha}^{k})}\left(r_{k}^{\gamma}R^{-\gamma}+\Big(\ln\frac{4}{r_{k}}\Big)^{-1}\right).$$
The rest of proof is similar to that of Lemma \ref{weak est barv}. Hence, we obtain
\begin{align*}
\int_{\cD\setminus B_{cr_{k}}(x_{k})}|Du_{1}|w\ dx\leq N\int_{Q_{\alpha}^{k}}|b|w\ dx\leq N\int_{Q_{\alpha}^{k}}|f|w\ dx+N\int_{Q_{\alpha}^{k}}|g|w\ dx\leq Ntw(Q_{\alpha}^{k}).
\end{align*}
That is,
$$\int_{\cD\setminus B_{cr_{k}}(x_{k})}|Tb\chi_{Q_{\alpha}^{k}}|w\ dx\leq Ntw(Q_{\alpha}^{k}).$$

(ii) If $\dist(x_{k},\partial\cD)\geq\frac{\delta_{0}}{2}$, then $B_k$ does not intersect with $\partial\cD$. In this case, we choose the coordinate system according to $x_{k}$. In a given $Q_{\alpha}^{k}$, we set
\begin{align*}
g(x)=E(x)\fint_{Q_{\alpha}^{k}}E^{-1}(y)f(y)\ dy,\quad b=f-g,
\end{align*}
where $E=(E^{\alpha\beta})$ is a $d\times d$ matrix with
\begin{align*}
E^{\alpha\beta}=\delta_{\alpha\beta}~\, \mbox{for}~1\le \alpha \le d,1\le \beta \le d-1,\quad
E^{\alpha d}=A^{d\alpha}\,~\mbox{for}~1\le \alpha\le d.
\end{align*}
By using the boundedness of $A$, we have
\begin{equation*}
\int_{Q_{\alpha}^{k}}|g|^{p}w\ dx\leq Nt^{p}w(Q_{\alpha}^{k}).
\end{equation*}
Let $\tilde{b}=E^{-1}b$, which has mean zero in $Q_{\alpha}^{k}$. 
We now follow the argument as in (i) and get
\begin{align}\label{iden u1 f11}
\int_{\cD_{2R}(x_{k})\setminus B_{R}(x_{k})}Du_{1}\cdot h&=\int_{Q_{\alpha}^{k}}Du_{2}\cdot b=\int_{Q_{\alpha}^{k}}\left(
          D_{x'}u_{2}, U_{2}
 \right)\cdot\tilde{b}\nonumber\\
&=\int_{Q_{\alpha}^{k}}\left(
          D_{x'}u_{2}-D_{x'}u_{2}(x_{k}), U_{2}-U_{2}(x_{k})
 \right)\cdot\tilde{b},
\end{align}
where $U_{2}=A^{d\beta}D_{\beta}u_{2}$. Recalling that $cr_{k}\leq R\leq R_{0}$, $B_{\frac{\delta_{0}R}{2R_0}}(x_{k})$ does not intersect with $\partial\cD$. By a similar argument that led to \eqref{C1 est1} (or \eqref{C1 est}, \eqref{C1 est2}), for any $x\in Q_{\alpha}^{k}\subset B_{\frac{\delta_{0}R}{4R_0}}(x_{k})$, we have
\begin{align*}
&|(D_{x'}u_{2}(x),U_{2}(x))-(D_{x'}u_{2}(x_{k}),U_{2}(x_{k}))|\nonumber\\
&\leq N\left(\Big(\frac{|x-x_{k}|}{R}\Big)^{\gamma}+\int_{0}^{|x-x_{k}|}\frac{\tilde{\omega}_{A}(t)}{t}\ dt+\omega_1\Big(\frac{|x-x_{k}|}{R}\Big)\right)R^{-d}\|(D_{x'}u_{2},U_{2})\|_{L^{1}(B_{\frac{\delta_{0}R}{2R_0}}(x_{k}))}\nonumber\\
&\leq N\left(\Big(\frac{|x-x_{k}|}{R}\Big)^{\gamma}+\int_{0}^{|x-x_{k}|}\frac{\tilde{\omega}_{A}(t)}{t}\ dt+\omega_1\Big(\frac{|x-x_{k}|}{R}\Big)\right)
R^{-d}\|Du_{2}\|_{L^{1}(B_{\frac{\delta_{0}R}{2R_0}}(x_{k}))}.
\end{align*}
Thus, coming back to \eqref{iden u1 f11} and using the similar argument as in the case (i), we have
\begin{align*}
\int_{\cD\setminus B_{cr_{k}}(x_{k})}|Du_{1}|w\ dx&\leq N\int_{Q_{\alpha}^{k}}|\tilde{b}|w\ dx\leq N\int_{Q_{\alpha}^{k}}|b|w\ dx\\
&\leq N\int_{Q_{\alpha}^{k}}|f|w\ dx+N\int_{Q_{\alpha}^{k}}|g|w\ dx\leq Ntw(Q_{\alpha}^{k}).
\end{align*}
Therefore, $T$ satisfies the hypothesis of Lemma \ref{weak general}, and for any $t>0$,
$$w(\{x\in \cD: |Du(x)|>t\})\leq\frac{N}{t}\|f\|_{L_{w}^{1}(\cD)}.$$
The theorem is proved.
\end{proof}

\section{Appendix}

In the appendix, we give generalizations of Lemmas \ref{lemma weak} and \ref{solvability}. We first recall the definition of the doubling measure $w$: a nontrivial measure on a metric space $X$ is said to be doubling if the measure of any ball is finite and approximately the measure of its double, more precise, if there is a constant $C>0$ such that
$$0<w(B_{2r}(x))\leq Cw(B_{r}(x))<\infty$$
for all $x\in X$ and $r>0$. Let $\cD$ be a bounded domain in $\mathbb R^{d}$ satisfying the condition \eqref{condition D}. We note that $\cD$ equipped with the standard Euclidean metric and the doubling measure $w$ (restricted to $\cD$) is a space of homogeneous type.
By \cite[Theorem~11]{Ch90}, there exists a collection of ``cubes''
$$
\big\{Q^k_\alpha \subset\cD :  k \in \bZ, \; \alpha \in I_k\big\},
$$
with $I_k$ at most countable set and constants $\delta \in (0,1)$, $a_0>0$, and $C_1<\infty$ such that
\begin{enumerate}[i)]
\item
$\cD \setminus \bigcup_\alpha Q^k_\alpha =0\quad \forall~k$.
\item
If $\ell \ge k$ then either $Q^\ell_\beta \subset Q^k_\alpha$ or $Q^\ell_\beta \cap Q^k_\alpha=\emptyset$.
\item
For each $(k,\alpha)$ and each $\ell < k$ there is a unique $\beta$ such that $Q^k_\alpha \subset Q^\ell_\beta$.
\item
$\text{diam}\, Q^k_\alpha \le C_1 \delta^k$.
\item
Each $Q^k_\alpha$ contains some ``ball'' $B_{a_0 \delta^k}(z^k_\alpha) \cap \cD$.
\end{enumerate}
From the above and the doubling property of the measure $w$, we can infer that there is a constant $C_2\ge 1$ such that if $Q^{k-1}_\beta$ is the parent of $Q^k_\alpha$, then we have
\begin{equation*}
w(Q^{k-1}_\beta) \le C_2 w(Q^k_\alpha).
\end{equation*}

Let $p,c\in (1,\infty)$.
\begin{assumption}
                        \label{assump9.26}
i) $T$ is a bounded linear operator on $L_{w}^{p}(\cD)$.

ii) If for some $f\in L_{w}^p(\cD)$, $t>0$, and some cube $Q_\alpha^k$ we have
$$
t<\frac{1}{w(Q_\alpha^k)}\int_{Q_\alpha^k}|f|w\ dx\leq C_{2}t,
$$
then $f$ admits a decomposition $f=g+b$ in $Q_\alpha^k$, where $g$ and $b$ satisfy
\begin{equation}\label{prop h b}
\int_{Q_\alpha^k}|g|^{p}w\ dx\leq C_1t^{p}w(Q_\alpha^k),\quad\int_{\cD\setminus B_{cr}(x_{0})}|T(b\chi_{Q_\alpha^k})|w\ dx\leq C_1tw(Q_\alpha^k)
\end{equation}
with $x_0\in Q_\alpha^k$ and $r=\text{diam}\,Q_\alpha^k$.
\end{assumption}

\begin{remark}\label{rmk general}
By taking a sufficiently large $c$, Assumption \ref{assump9.26} satisfies automatically for large cubes (i.e., small $k$). In fact, when $c\alpha_0\delta^k\ge \text{diam}\,\cD$, we can just take $g=0$ and \eqref{prop h b} holds because  $\cD\setminus B_{cr}(x_{0})=\emptyset$.
\end{remark}

\begin{lemma}\label{weak general}
Under Assumption \ref{assump9.26}, for any $f\in L_{w}^p(\cD)$ and $t>0$, we have
\begin{equation}
                        \label{eq10.58}
w(\{x\in \cD: |Tf(x)|>t\})\leq\frac{N}{t}\int_{\cD}|f|w\ dx,
\end{equation}
where $N=N(d,c,\cD,p,C_1,\|T\|_{L_{w}^p\rightarrow L_{w}^p})$ is a constant. Moreover, $T$ can be extended to a bounded operator from $L_{w}^1(\cD)$ to weak-$L_{w}^1(\cD)$.
\end{lemma}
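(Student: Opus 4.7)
The argument is the classical Calder\'on--Zygmund scheme adapted to the Christ-type dyadic cube system $\{Q_\alpha^k\}$ on the space of homogeneous type $(\cD,w)$. We may assume $\|f\|_{L^1_w(\cD)} \le tw(\cD)$, as otherwise $\frac{N}{t}\|f\|_{L^1_w} \ge w(\cD) \ge w(\{|Tf|>t\})$ and the bound is trivial. The first step is a stopping-time selection: starting from a coarse level at which every cube $Q$ already satisfies $\frac{1}{w(Q)}\int_Q |f|w\,dx \le t$ (such a level exists by Remark \ref{rmk general} together with the normalization above), I select the maximal dyadic cubes $\{Q_j\}$ for which this average strictly exceeds $t$. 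They are pairwise disjoint, and writing $\widehat{Q}_j$ for the parent of $Q_j$, the maximality plus the doubling relation $w(\widehat{Q}_j) \le C_2 w(Q_j)$ gives
$$
t < \frac{1}{w(Q_j)}\int_{Q_j}|f|w\,dx \le C_2 t,
$$
which places us exactly in the hypothesis of Assumption \ref{assump9.26}(ii) on each $Q_j$.

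Next I would apply Assumption \ref{assump9.26}(ii) in each $Q_j$ to obtain $f = g_j + b_j$ on $Q_j$ satisfying \eqref{prop h b}, and extend to a global splitting $f = g + b$ by declaring $g = f$, $b=0$ on $\cD \setminus \bigcup_j Q_j$. By the Lebesgue differentiation theorem for the dyadic martingale on $(\cD,w)$, one has $|f| \le t$ a.e.\ outside $\bigcup_j Q_j$. Combined with the $L^p_w$-bound in \eqref{prop h b} and the trivial estimate $\sum_j w(Q_j) \le \frac{1}{t}\|f\|_{L^1_w(\cD)}$, this yields
$$
\|g\|_{L^p_w(\cD)}^p \le t^{p-1}\|f\|_{L^1_w(\cD)} + C_1 t^p \sum_j w(Q_j) \le N t^{p-1}\|f\|_{L^1_w(\cD)}.
$$
Split $\{|Tf|>t\} \subset \{|Tg|>t/2\} \cup \{|Tb|>t/2\}$. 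Chebyshev together with the $L^p_w$-boundedness of $T$ in Assumption \ref{assump9.26}(i) gives $w(\{|Tg|>t/2\}) \le (2/t)^p\|T\|^p\|g\|_{L^p_w}^p \le \frac{N}{t}\|f\|_{L^1_w(\cD)}$. For the bad term, set $E = \bigcup_j \bigl(\cD \cap B_{c r_j}(x_j)\bigr)$ with $r_j = \mathrm{diam}\,Q_j$ and $x_j \in Q_j$. Since each $Q_j$ contains an intrinsic ball of radius $a_0\delta^{k_j}$, comparable to $r_j$, the doubling property of $w$ yields $w(E) \le N\sum_j w(Q_j) \le \frac{N}{t}\|f\|_{L^1_w(\cD)}$; outside $E$, the second inequality of \eqref{prop h b} and Markov give $w(\{|Tb|>t/2\}\cap(\cD\setminus E)) \le \frac{2}{t}\sum_j \int_{\cD\setminus B_{cr_j}(x_j)}|Tb_j|w\,dx \le \frac{N}{t}\|f\|_{L^1_w(\cD)}$. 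Combining these three contributions proves \eqref{eq10.58}.

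For the final extension statement, I would use that $L^p_w(\cD) \cap L^1_w(\cD)$ is dense in $L^1_w(\cD)$ (bounded compactly supported functions suffice since $w$ is locally finite and $\cD$ is bounded); the weak-$(1,1)$ bound applied to differences makes $\{Tf_n\}$ Cauchy in measure with respect to $w$, so it converges in weak-$L^1_w(\cD)$, and the limit defines the extension.

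The main technical hurdle is verifying that the constants are tracked correctly through the stopping-time procedure: specifically, that the doubling constant appearing as the upper bound $C_2 t$ on the selected averages is indeed the same $C_2$ fixed in Assumption \ref{assump9.26}, and that the constant $N$ in $w(E) \le N\sum_j w(Q_j)$ depends only on $c$, the dyadic parameters $C_1, a_0$, and the doubling constant of $w$, so that it is absorbed into the universal dependence declared in the lemma. Handling the coarsest-scale ``boundary'' case of the stopping-time argument, where a cube at the top level may itself fail the defining inequality, is exactly what Remark \ref{rmk general} is designed to address.
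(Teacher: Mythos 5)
Your proposal is correct and follows essentially the same route as the paper: a Calder\'on--Zygmund stopping-time decomposition on the Christ cubes (with the parent/doubling bound giving averages in $(t,C_2t]$, so Assumption \ref{assump9.26}(ii) applies), Chebyshev plus the $L^p_w$-boundedness of $T$ for the good part, the second inequality of \eqref{prop h b} plus doubling of $w$ for the bad part and the exceptional set $\bigcup_j B_{cr_j}(x_j)$, and density of $L^p_w$ in $L^1_w$ for the extension. The only cosmetic difference is the treatment of the trivial/coarse-scale case: you normalize by $w(\cD)$ and start from a top-level cube equal to $\cD$, whereas the paper fixes a level $k_0$ and disposes of the case $t^{-1}\int_\cD|f|w>\inf_\alpha w(Q_\alpha^{k_0})$ directly; both are fine.
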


\begin{proof}
We fix a $k_{0}\in\mathbb{Z}$ and set $\theta=\inf_{\alpha\in I_{k_{0}}}w(Q_{\alpha}^{k_{0}})>0$. See conditions i)--v). Then for any $t>0$, to get \eqref{eq10.58} when
$$\frac{1}{t}\int_{\cD}|f|w\ dx>\theta,$$
it suffices to choose $N\geq\theta^{-1}w(\cD)$. Otherwise,
$$
\frac{1}{w(Q_{\alpha}^{k_{0}})}\int_{Q_{\alpha}^{k_{0}}}|f|w\ dx\leq t,\quad\forall~\alpha\in I_{k_{0}}.
$$
In this case, let $\{Q_{\alpha}^{k}\}$ be a collection of disjoint ``cubes'' from the Calder\'on-Zygmund decomposition as those in the proof of Lemma 4.1 in \cite{dek},
so that we have
\begin{equation*}
t<\frac{1}{w(Q_{\alpha}^{k})}\int_{Q_{\alpha}^{k}}|f|w\ dx\leq C_2t,\quad|f(x)|\leq t~~\mbox{for}~~ a.e.~~x\in\cD\setminus\bigcup_{l}Q_{\alpha}^{k}.
\end{equation*}
We associate each $Q_{\alpha}^{k}$ with a Euclidean ball $B_k=B_{r_k}(x_{k})$, where $r_k=\text{diam}\, Q_{\alpha}^{k}$ and $x_{k}\in Q_{\alpha}^{k}\subset B_{k}$. We denote $B_k^*=B_{cr_k}(x_{k})$.
By Assumption \ref{assump9.26}, in each $Q_{\alpha}^{k}$ we have the decomposition $f=g+b$ and \eqref{prop h b}. We also define $g=f$ and $b=0$ in $\cD\setminus\bigcup_{l}Q_{\alpha}^{k}$. By using the Chebyshev inequality and the assumptions of $f$ and $g$, we have
\begin{align}\label{TH}
&w\Big(\big\{x\in\cD: |Tg(x)|>t/2\big\}\Big)\leq\frac{N}{t^{p}}\int_{\cD}|Tg|^{p}w\ dx\nonumber\\
&\leq\frac{N}{t^{p}}\int_{\cD}|g|^{p}w\ dx\nonumber\\
&\leq\frac{N}{t^{p}}\sum_{l}\int_{Q_{\alpha}^{k}}|g|^{p}w\ dx+\frac{N}{t^{p}}\int_{\cD\setminus\bigcup_{l}Q_{\alpha}^{k}}|g|^{p}w\ dx\nonumber\\
&\leq N\sum_{l}w(Q_{\alpha}^{k})+\frac{N}{t}\int_{\cD\setminus\bigcup_{l}Q_{\alpha}^{k}}|f|w\ dx\leq\frac{N}{t}\int_{\cD}|f|w\ dx.
\end{align}
Next by using \eqref{prop h b}, we obtain
$$
\int_{\cD\setminus B_{k}^{*}}|T(b\chi_{Q_{\alpha}^{k}})|w\ dx\leq Ntw(Q_{\alpha}^{k}),
$$
which implies that
$$
\int_{\cD\setminus \bigcup_l B_{k}^{*}}|Tb|w\ dx
\leq \sum_l \int_{\cD\setminus B_{k}^{*}}|T(b\chi_{Q_{\alpha}^{k}})|w\ dx \le Nt\sum_l w(Q_{\alpha}^{k})\leq N\int_{\cD}|f|w\ dx.
$$
By the Chebyshev inequality, we get
$$w\Big(\{x\in\cD: |Tb(x)|>t/2\}\setminus \bigcup_l B_{k}^{*}\Big)\leq\frac{N}{t}\int_{\cD}|f|w\ dx.$$
Clearly, we also have
$$
w\big(\bigcup_{l}B_{k}^{*}\big)
\le \sum_l w(B_{k}^{*})
\leq N\sum_{l}w(B_{k})\leq N\sum_{l}w(Q_{\alpha}^{k})
\leq\frac{N}{t}\int_{\cD}|f|w\ dx.$$
We thus obtain
$$w\Big(\{x\in\cD: |Tb(x)|>t/2\}\Big)\leq\frac{N}{t}\int_{\cD}|f|w\ dx,$$
which combined with \eqref{TH} finishes the proof of \eqref{eq10.58} since $Tf=Tg+Tb$. The last assertion follows from the fact that $L_{w}^p(\cD)$ is dense in $L_{w}^1(\cD)$. The lemma is proved.
\end{proof}

\begin{remark}
If we take $w=1$ and
$$
g=\fint_{Q_\alpha^k}|f|\ dx,
$$
 Lemma \ref{weak general} is then reduced to Lemma \ref{lemma weak}. In the special case when
$$
g=\frac{1}{W(Q_\alpha^k)} \int_{Q_\alpha^k} f W\ dx,
$$
where $W$ is a nonnegative adjoint solution,
Lemma \ref{weak general} was also essentially used in the proof of \cite[Theorem 1.10]{dek}.
\end{remark}

Now we give the statement of a generalization of Lemma \ref{solvability}. We first give the definition of $A_p$ weights: We say $w: \mathbb R^{d}\rightarrow[0,\infty)$ belongs to $A_{p}$ for $p\in(1,\infty)$ if
$$\sup_{B}\frac{w(B)}{|B|}\left(\frac{w^{\frac{-1}{p-1}}(B)}{|B|}\right)^{p-1}<\infty,$$
where the supremum is taken over all balls in $\mathbb R^{d}$. The value of the supremum is the $A_{p}$ constant of $w$, and will be denoted by $[w]_{A_p}$.

We next consider the domain $\cD$ which is Reifenberg flat and impose the following assumptions on the coefficients $A^{\alpha\beta}$ and the boundary $\partial\cD$, with a parameter $\gamma_0\in (0,1/4)$ to be specified later.
\begin{assumption}[$\gamma_0$] \label{assump coeffi}
There exists a constant $r_{0}\in(0,1]$ such that the following conditions hold.

(1) In the interior of $\cD$, $A^{\alpha\beta}$ satisfy \eqref{BMO} in some coordinate system depending on $x_0$ and $r$.

(2) For any $x_{0}\in\partial\cD$ and $r\in(0,r_{0}]$, there is a coordinate system depending on $x_{0}$ and $r$ such that in this new coordinate system, we have
\begin{equation}\label{reifenberg}
\{(y',y^{d}): x_0^{d}+\gamma_{0}r<y^{d}\}\cap B_{r}(x_0)\subset \cD\cap B_{r}(x_0)\subset\{(y',y^{d}): x_0^{d}-\gamma_{0}r<y^{d}\}\cap B_{r}(x_0),
\end{equation}
and
\begin{equation}\label{condi boundary}
\fint_{B_{r}(x_{0})}|A(x)-(A)_{B'_{r}(x'_{0})}|\,dx\leq\gamma_{0},
\end{equation}
where $(A)_{B'_{r}(x'_{0})}=\fint_{B'_{r}(x'_{0})}A(y',x^{d})\,dy'$.
\end{assumption}

\begin{lemma}\label{sol weight}
Let $p\in(1,\infty)$ and $w$ be an $A_p$ weight. There exists a constant $\gamma_0\in (0,1/4)$ depending on $d$, $p$, $\nu$, $\Lambda$, $\cD$, and $[w]_{A_p}$
such that, under Assumption \ref{assump coeffi}, for any $u\in W_{w}^{1,p}(\cD)$ satisfying
\begin{align}\label{eq weight}
\begin{cases}
D_{\alpha}(A^{\alpha\beta}D_{\beta}u)-\lambda u=\Div f&\quad\mbox{in}~\cD,\\
u=0&\quad\mbox{on}~\partial\cD,
\end{cases}
\end{align}
where $\lambda\geq0$ and $f\in L_{w}^{p}(\cD)$, we have
\begin{equation}\label{W1p es weight}
\|u\|_{W_{w}^{1,p}(\cD)}\leq N\|f\|_{L_{w}^{p}(\cD)},
\end{equation}
where $N=N(n,d,p,\nu,\Lambda,[w]_{A_{p}},r_{0})$. Furthermore, for any $f\in L_{w}^{p}(\cD)$, \eqref{eq weight} admits a unique solution $u\in  W_{w}^{1,p}(\cD)$.
\end{lemma}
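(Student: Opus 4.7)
My plan is to deduce the weighted solvability from the unweighted version (Lemma \ref{solvability}, i.e.\ Theorem 8.6 of \cite{dk2}) by running a Caffarelli--Peral / Byun--Wang good-$\lambda$ scheme, combined with the Muckenhoupt $A_p$ theorem for the Hardy--Littlewood maximal operator. The Reifenberg flatness \eqref{reifenberg} and the partially-BMO conditions \eqref{BMO}--\eqref{condi boundary} control the geometry at a quantitative scale $\gamma_0$, which we will choose last, depending on $[w]_{A_p}$. Throughout I fix an exponent $q_0\in(1,p)$ small enough that $w\in A_{p/q_0}$; such $q_0$ exists by the self-improving (openness) property of the $A_p$ class.

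\textbf{Step 1: interior and boundary comparison estimates.} For any $x_0\in\overline{\cD}$ and $r\in(0,r_0]$, decompose $u=v+h$ on $\cD_{r}(x_0)$, where $v$ solves the frozen/simple-geometry system: in the interior case, the system $D_\alpha(\bar A^{\alpha\beta}D_\beta v)-\lambda v=0$ on $B_r(x_0)$ with $v=u$ on $\partial B_r(x_0)$ and $\bar A$ chosen as in \eqref{BMO}; in the boundary case, $v$ solves the analogous system on the flat half-ball approximation provided by \eqref{reifenberg}, with right-hand side $\Div(f\chi)$ and $v=0$ on the flat boundary. Using Lemma \ref{solvability} with exponent $p_1>q_0$ on the reference domain, one obtains an $L^{p_1}$ estimate for $Dv$, hence by H\"older,
\[
\Big(\fint_{B_{r/2}(x_0)\cap\cD}|Dv|^{q_0}\Big)^{1/q_0}\le N\Big(\fint_{B_{2r}(x_0)\cap\cD}|Du|^{q_0}\Big)^{1/q_0} + N\Big(\fint_{B_{2r}(x_0)\cap\cD}|f|^{q_0}\Big)^{1/q_0}.
\]
The difference $h=u-v$ satisfies an equation with right-hand side of the form $\Div\bigl((A-\bar A)Du\bigr)$ plus a geometric correction supported on the symmetric difference of $\cD_r$ and the half-ball; applying Lemma \ref{solvability} to $h$ and using \eqref{condi boundary} and the measure estimate from \eqref{reifenberg} yields, with some $\kappa=\kappa(d,p,q_0)>0$,
\[
\Big(\fint_{B_{r/2}(x_0)\cap\cD}|Dh|^{q_0}\Big)^{1/q_0}\le N\gamma_0^{\kappa}\Big(\fint_{B_{2r}(x_0)\cap\cD}|Du|^{q_0}\Big)^{1/q_0}+N\Big(\fint_{B_{2r}(x_0)\cap\cD}|f|^{q_0}\Big)^{1/q_0}.
\]

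\textbf{Step 2: good-$\lambda$ inequality and weighted bound.} Combining Step 1 with a standard truncation argument, one shows that for some large constant $K$ and some $\eta=\eta(\gamma_0)$ with $\eta\to0$ as $\gamma_0\to 0$,
\[
\bigl\{x\in\cD:\mathcal{M}_{\cD}(|Du|^{q_0})(x)>K\tau^{q_0},\ \mathcal{M}_{\cD}(|f|^{q_0})(x)\le \eta\tau^{q_0}\bigr\}
\]
is covered by dyadic cubes on which a Vitali-type argument gives the good-$\lambda$ inequality
\[
w\bigl(\{\mathcal{M}_{\cD}(|Du|^{q_0})>K\tau^{q_0}\}\bigr)\le C[w]_{A_{p/q_0}}\eta^{\sigma}\,w\bigl(\{\mathcal{M}_{\cD}(|Du|^{q_0})>\tau^{q_0}\}\bigr)+w\bigl(\{\mathcal{M}_{\cD}(|f|^{q_0})>\eta\tau^{q_0}\}\bigr)
\]
for some $\sigma>0$; here $\mathcal{M}_{\cD}$ is the localized Hardy--Littlewood maximal operator. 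Integrating against $p\tau^{p-1}\,d\tau$, choosing $\gamma_0$ small enough that $C[w]_{A_{p/q_0}}\eta^\sigma<\tfrac12 K^{-p/q_0}$, and using the Muckenhoupt theorem (since $w\in A_{p/q_0}$) to bound $\|\mathcal{M}_{\cD}(|f|^{q_0})^{1/q_0}\|_{L^p_w}\le N\|f\|_{L^p_w}$, we obtain $\|Du\|_{L^p_w(\cD)}\le N\|f\|_{L^p_w(\cD)}$. Combining with a weighted Poincar\'e inequality on the Reifenberg-flat domain $\cD$ (valid for $A_p$ weights) to estimate $\|u\|_{L^p_w}$ by $\|Du\|_{L^p_w}$ yields \eqref{W1p es weight}.

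\textbf{Step 3: existence and uniqueness.} Uniqueness in $W^{1,p}_w(\cD)$ is immediate from the a priori estimate \eqref{W1p es weight} applied to the difference of two solutions. For existence, approximate $f\in L^p_w(\cD)$ by $f_k\in C_c^\infty(\cD)\subset L^p(\cD)\cap L^p_w(\cD)$, solve $\mathcal{L}u_k-\lambda u_k=\Div f_k$ in $W^{1,2}_0(\cD)$ via Lax--Milgram (which also lies in $W^{1,p_0}_0$ for some $p_0>1$ by Lemma \ref{solvability}), apply \eqref{W1p es weight} to $u_k-u_\ell$ to show $\{u_k\}$ is Cauchy in $W^{1,p}_w$, and pass to the limit.

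\textbf{Main obstacle.} The delicate point is Step 1 in the boundary case: since Reifenberg flatness does \emph{not} permit a smooth flattening of $\partial\cD$, one cannot directly transplant the half-space estimate. Instead one must compare $u$ with a solution on the half-ball $\{y^d>x_0^d\}\cap B_r(x_0)$ from \eqref{reifenberg} and control errors on the symmetric-difference strip of measure $\lesssim \gamma_0 r^d$; pushing this through in $L^{q_0}$ rather than $L^2$ requires invoking higher integrability of $Dv$ (Meyers/Gehring self-improvement applied to the half-ball system, whose partial BMO norm is again $O(\gamma_0)$). Once $\gamma_0$ is chosen small enough that simultaneously (a) the comparison gives a factor $\gamma_0^\kappa$ in Step 1, (b) the good-$\lambda$ constant $C[w]_{A_{p/q_0}}\eta^\sigma$ in Step 2 is strictly less than $\tfrac12 K^{-p/q_0}$, and (c) $w\in A_{p/q_0}$, the proof goes through.
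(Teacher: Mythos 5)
Your route (a Caffarelli--Peral/Byun--Wang good-$\lambda$ scheme with the localized maximal function and the $A_\infty$ property of $w$) is genuinely different from the paper's proof, which simply invokes the weighted $W^{1,p}$ theory of \cite{dk1} for the operator with a large zeroth-order term $\lambda>\lambda_0$, and then removes the largeness restriction by rewriting the equation, combining the unweighted estimate of \cite[Theorem 8.6]{dk2} at a low exponent $1+\delta_1$ with the self-improving/reverse H\"older properties of $A_p$ weights, a weighted Sobolev embedding, and interpolation. However, as written your argument has a genuine gap at its crucial point, the passage from Step 1 to Step 2. The good-$\lambda$ covering argument requires, on each ball where $\mathcal{M}_{\cD}(|Du|^{q_0})$ and $\mathcal{M}_{\cD}(|f|^{q_0})$ are small at one point, a bound of the form $\|Dv\|_{L^\infty(B_{r/2}\cap\cD)}^{q_0}\le N\fint_{B_{2r}\cap\cD}(|Du|^{q_0}+|f|^{q_0})$ for the reference solution $v$ (or at least an $L^{s}$ bound with $s$ large depending on $p$ and on the $A_\infty$ exponent of $w$, since in the iteration one must beat the factor $K^{p/q_0}$ after converting Lebesgue-measure smallness into $w$-measure smallness). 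Your Step 1 only records an $L^{q_0}$-average bound for $Dv$, and the patch you propose in the ``Main obstacle'' paragraph --- Meyers/Gehring self-improvement --- cannot bridge this: Gehring gives a fixed small gain $q_0+\epsilon_0$, whereas the needed exponent is unbounded as $p$ grows or as $[w]_{A_p}$ degenerates. The correct ingredient is exactly the Lipschitz estimate for systems whose coefficients depend on $x^d$ alone (interior: Lemma \ref{lemma xn}; boundary: its half-ball analogue with zero Dirichlet data, as in \cite{dek,dk2}), which is available precisely because of the structure of Assumption \ref{assump coeffi}, but you never establish or invoke it; note also that this $L^\infty$ bound is for the homogeneous reference problem, so the $\Div(f\chi)$ right-hand side you put into the boundary comparison problem must instead be absorbed into $h$.

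Two smaller points. First, the estimate is claimed with $N$ independent of $\lambda\ge 0$; your comparison problems carry the $-\lambda v$ term, and the uniformity in $\lambda$ of the reference Lipschitz estimates is not addressed (the paper avoids this by using \cite{dk1} for $\lambda>\lambda_0$ and the rewriting $D_\alpha(A^{\alpha\beta}D_\beta u)-(\lambda+\lambda_0)u=\Div f-\lambda_0 u$ for $0\le\lambda\le\lambda_0$, which then forces the extra work of bounding $\|u\|_{L^p_w}$ by $\|f\|_{L^p_w}$ via unweighted estimates and embeddings). Second, the smallness factor $\gamma_0^{\kappa}$ in your $Dh$ estimate is not automatic from \eqref{BMO}--\eqref{condi boundary} alone: $A-\bar A$ is small only in $L^1$-mean while $Du$ is a priori only in $L^{q_0}$, so here (unlike for $Dv$) you do need a Gehring-type reverse H\"older inequality for $Du$ itself before H\"older's inequality produces a positive power of $\gamma_0$; this should be said explicitly. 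With the reference $L^\infty$ gradient bound supplied and the $\lambda$-uniformity handled, your scheme would give an alternative, self-contained proof; without them the chain from Step 1 to the weighted bound in Step 2 does not close.
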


\begin{proof}
For $\lambda>\lambda_{0}$, where $\lambda_{0}>0$ is a sufficiently large constant depending on $n,d,p,\nu,\Lambda,[w]_{A_{p}}$ and $r_{0}$, then the solvability for the operator $D_{\alpha}(A^{\alpha\beta}D_{\beta})-\lambda I$ is proved in \cite[Section 8]{dk1}, and \cite[Theorem 7.2]{dk1} implies that \eqref{W1p es weight} holds true. For $0\leq\lambda\leq\lambda_{0}$, We rewrite \eqref{eq weight} as
\begin{align}\label{eq weight1}
\begin{cases}
D_{\alpha}(A^{\alpha\beta}D_{\beta}u)-(\lambda+\lambda_{0})u=\Div f-\lambda_{0}u&\quad\mbox{in}~\cD,\\
u=0&\quad\mbox{on}~\partial\cD.
\end{cases}
\end{align}
Then by \cite[Theorem 7.2]{dk1}, we have
\begin{equation}\label{appen es u W1p}
\|u\|_{W_{w}^{1,p}(\cD)}\leq N\big(\|u\|_{L_{w}^{p}(\cD)}+\|f\|_{L_{w}^{p}(\cD)}\big),
\end{equation}
where $N=N(n,d,p,\nu,\Lambda,[w]_{A_{p}},r_{0})$. Thus by the method of continuity, it suffices to bound $\|u\|_{L_{w}^{p}(\cD)}$ by $\|f\|_{L_{w}^{p}(\cD)}$. By using H\"{o}lder's inequality, $f\in L_{w}^{p}(\cD)$, and the self-improving property of $A_{p}$ weights, we have for some small $\delta_{1}>0$ depending on $d$, $p$, and $[w]_{A_p}$,
\begin{align}\label{f L1+delta_{1}}
\int_{\cD}|f|^{1+\delta_{1}}\ dx&\leq \left(\int_{\cD}|f|^{p}w\ dx\right)^{\frac{1+\delta_{1}}{p}}\left(\int_{\cD}w^{-\frac{1+\delta_{1}}{p-1-\delta_{1}}}\ dx\right)^{1-\frac{1+\delta_{1}}{p}}\nonumber\\
&\leq\left(\int_{\cD}|f|^{p}w\ dx\right)^{\frac{1+\delta_{1}}{p}}\left(\int_{\cD}w^{-\frac{1}{p-1}}\ dx\right)^{\frac{p-1}{p}(1+\delta_{1})}<\infty.
\end{align}
Next, in view of the reverse H\"{o}lder's inequality for $A_{p}$ weights, we have $w\in L^{1+\delta_{2}}$ for some $\delta_{2}>0$ depending on $d$, $p$, and $[w]_{A_p}$. By using H\"older's inequality, Young's inequality, the weighted Sobolev embedding theorems (see \cite[Theorem 1.3]{ecr}), and \eqref{appen es u W1p}, we have
\begin{align*}
\|u\|_{L_{w}^{p}(\cD)}&\leq \|u\|_{L_{w}^{p_{1}}(\cD)}^{\theta}\|u\|_{L_{w}^{\delta_{3}}(\cD)}^{1-\theta}\\
&\leq \varepsilon\|u\|_{L_{w}^{p_{1}}(\cD)}+N\|u\|_{L_{w}^{\delta_{3}}(\cD)}\\
&\leq \varepsilon\|u\|_{W_{w}^{1,p}(\cD)}+N\|u\|_{L_{w}^{\delta_{3}}(\cD)}\\
&\leq N'\varepsilon\|u\|_{L_{w}^{p}(\cD)}+N\|f\|_{L_{w}^{p}(\cD)}+N\|u\|_{L_{w}^{\delta_{3}}(\cD)},
\end{align*}
which yields
\begin{equation}\label{weight u W1p}
\|u\|_{L_{w}^{p}(\cD)}\leq N\big(\|f\|_{L_{w}^{p}(\cD)}+\|u\|_{L_{w}^{\delta_{3}}(\cD)}\big)
\end{equation}
if we choose $\varepsilon$ sufficiently small so that $N'\varepsilon<1/2$, where
$$
p_{1}=\frac{dp}{d-1}>p,\quad \frac{\theta}{p_{1}}+\frac{1-\theta}{\delta_{3}}=\frac{1}{p},
$$
and $\delta_{3}\in (0,1/2)$ is to be determined below. It follows from H\"{o}lder's inequality and $w\in L^{1+\delta_{2}}$ that
\begin{align*}
\int_{\cD}|u|^{\delta_{3}}w\ dx&\leq\left(\int_{\cD}|u|^{1+\delta_{1}}\ dx\right)^{\frac{\delta_{3}}{1+\delta_{1}}}\left(\int_{\cD}w^{\frac{1+\delta_{1}}{1+\delta_{1}-\delta_{3}}}\ dx\right)^{1-\frac{\delta_{3}}{1+\delta_{1}}}\\
&=\left(\int_{\cD}|u|^{1+\delta_{1}}\ dx\right)^{\frac{\delta_{3}}{1+\delta_{1}}}\left(\int_{\cD}w^{1+\delta_{2}}\ dx\right)^{\frac{1}{1+\delta_{2}}},
\end{align*}
where we chose $\delta_{3}=\frac{\delta_{2}(1+\delta_{1})}{1+\delta_{2}}$. Therefore,
\begin{equation}\label{appen es u delta 3}
\|u\|_{L_{w}^{\delta_{3}}(\cD)}\leq N\|u\|_{L^{1+\delta_{1}}(\cD)}.
\end{equation}
Coming back to \eqref{weight u W1p}, using \eqref{appen es u delta 3}, \cite[Theorem 8.6 (iii)]{dk2}, and \eqref{f L1+delta_{1}}, we have
\begin{equation*}
\|u\|_{L_{w}^{p}(\cD)}\leq N\big(\|f\|_{L_{w}^{p}(\cD)}+\|u\|_{L^{1+\delta_{1}}(\cD)}\big)\leq N\big(\|f\|_{L_{w}^{p}(\cD)}+\|f\|_{L^{1+\delta_{1}}(\cD)}\big)\leq N\|f\|_{L_{w}^{p}(\cD)}.
\end{equation*}
Then combining it with \eqref{appen es u W1p}, we get
$$\|u\|_{W_{w}^{1,p}(\cD)}\leq N\|f\|_{L_{w}^{p}(\cD)}.$$
Therefore, \eqref{W1p es weight} is proved.
\end{proof}


\noindent{\bf{\large Acknowledgements.}} This work was completed while the second author was visiting Brown University. She would like to thank the Division of Applied Mathematics at Brown University for the hospitality and the stimulating environment.

\end{document}